\newcommand{\nc}{\newcommand}
\numberwithin{equation}{section}
\newenvironment{red}{\relax\color{red}}{\relax}
\newenvironment{blue}{\relax\color{blue}}{\hspace*{.5ex}\relax}
\newenvironment{jaune}{\relax\color{Orchid}}{\hspace*{.5ex}\relax}
\newcommand{\bj}{\begin{jaune}}
\newcommand{\ej}{\end{jaune}}
\newcommand{\ber}{\begin{red}}
\newcommand{\er}{\end{red}}
\newcommand{\bebl}{\begin{blue}}
\newcommand{\ebl}{\end{blue}}
\theoremstyle{plain}
\newtheorem{lemma}{Lemma}[section]
\newtheorem{prop}[lemma]{Proposition}
\newtheorem{theorem}[lemma]{Theorem}
\newcommand{\Prop}{\begin{prop}}
\newcommand{\enprop}{\end{prop}}
\newcommand{\Lemma}{\begin{lemma}}
\newcommand{\enlemma}{\end{lemma}}
\newcommand{\Th}{\begin{theorem}}
\newcommand{\enth}{\end{theorem}}
\newtheorem{corollary}[lemma]{Corollary}
\newcommand{\Cor}{\begin{corollary}}
\newcommand{\encor}{\end{corollary}}
\newtheorem{definition}[lemma]{Definition}
\newtheorem{conjecture}[lemma]{Conjecture}
\newcommand{\Def}{\begin{definition}}
\newcommand{\edf}{\end{definition}}
\newtheorem{sublemma}[lemma]{Sublemma}
\newcommand{\Sublemma}{\begin{sublemma}}
\newcommand{\ensub}{\end{sublemma}}
\theoremstyle{definition}
\newtheorem{remark}[lemma]{Remark}
\newtheorem{example}[lemma]{Example}
\newtheorem{Convention}[lemma]{Convention}
\newcommand{\Conv}{\begin{Convention}}
\newcommand{\enconv}{\end{Convention}}
\nc{\Con}{\begin{conjecture}}
\nc{\encon}{\end{conjecture}}
\nc{\Rem}{\begin{remark}}
\nc{\enrem}{\end{remark}}
\newcommand{\C}{{\mathbb C}}
\newcommand{\Z}{{\mathbb Z}}
\newcommand{\B}{{\mathbf{B}}}
\newcommand{\A}{{\mathbf A}}
\newcommand{\one}{{\bf{1}}}
\newcommand{\g}{{\mathfrak{g}}}
\newcommand{\Hom}{\operatorname{Hom}}
\newcommand{\End}{\operatorname{End}}
\newcommand{\Ext}{\operatorname{Ext}}
\newcommand{\eq}{\begin{eqnarray}}
\newcommand{\eneq}{\end{eqnarray}}
\newcommand{\eqn}{\begin{eqnarray*}}
\newcommand{\eneqn}{\end{eqnarray*}}
\newcommand{\on}{\operatorname}
\newcommand{\QED}{\end{proof}}
\newcommand{\Proof}{\begin{proof}}
\newcommand{\soplus}{\mathop{\mbox{\normalsize$\bigoplus$}}\limits}
\newcommand{\ba}{\begin{array}}
\newcommand{\ea}{\end{array}}
\newcommand{\set}[2]{\left\{#1 \mid #2 \right\}}
\newcommand{\eqsub}{\begin{subequations}\begin{eqnarray}}
\newcommand{\eneqsub}{\end{eqnarray}\end{subequations}}
\newcommand{\ol}{\overline}
\nc{\la}{\lambda}
\nc{\lam}{\lambda}
\nc{\U}[1][\g]{U_q(#1)}
\nc{\te}{\tilde{e}}
\nc{\tei}{\tilde{e}_i}
\nc{\tf}{\tilde{f}}
\nc{\tfi}{\tilde{f}_i}
\nc{\tU}{\widetilde U_q(\g)}
\nc{\tE}{\tilde{E}}
\nc{\tF}{\widetilde{\F}}
\nc{\tK}{\widetilde{K}}
\nc{\tk}{\tilde{k}}
\nc{\tkone}{\tk_{\ol{1}}}
\nc{\teone}{\tilde{e}_{\ol{1}}}
\nc{\tfone}{\tilde{f}_{\ol{1}}}
\nc{\teibar}{\tilde{e}_{\ol{i}}} \nc{\tfibar}{\tilde{f}_{\ol{i}}}
\nc{\tki}{{\tk}_{\ol {i}}}
\nc{\BZ}{{\mathbb{Z}}}
\nc{\al}{\alpha}
\nc{\qs}{{q}}
\nc{\lan}{\langle}
\nc{\ran}{\rangle}
\nc{\re}{{\mathrm{re}}}
\nc{\wt}{\operatorname{wt}}
\nc{\ch}{\operatorname{ch}}
\nc{\Um}[1][\g]{U^-_q(#1)}
\nc{\Ue}{U^+_q(\g)}
\nc{\eps}{\varepsilon}
\nc{\vphi}{\varphi}
\nc{\sphi}{\varphi^*}
\nc{\seps}{\varepsilon^*}
\nc{\nn}{\nonumber}
\nc{\vp}{\varpi}
\nc{\cls}{{\operatorname{cl}}}
\nc{\Wt}{{\operatorname{Wt}}}
\nc{\Us}{U'_q(\g)}
\nc{\La}{\Lambda}
\nc{\tLa}{\widetilde\Lambda}
\nc{\ro}{{\rm(}}
\nc{\rf}{{\rm)}}
\nc{\norm}{{\mathrm{norm}}}
\nc{\qbox}{\quad\mbox}
\nc{\braid}{{\mathfrak{B}}}
\nc{\Ad}{\operatorname{Ad}}
\nc{\Aut}{\operatorname{Aut}}
\nc{\dt}[1]{\tilde{\tilde #1}}
\nc{\Sn}{S^{{\mathrm{norm}}}}
\nc{\aff}{{\rm{aff}}}
\nc{\rk}{{\mathrm{rk}}}
\nc{\tP}{\widetilde{P}}
\nc{\tW}{\widetilde{W}}
\nc{\Dyn}{\mathrm{Dyn}}
\nc{\tD}{\widetilde{\Delta}}
\nc{\height}[1]{{\operatorname{ht}}(#1)}
\nc{\bl}{\bigl(}
\nc{\br}{\bigr)}
\nc{\Hecke}{\mathrm{H}}
\nc{\HA}{\Hecke^{\mathrm{A}}}
\nc{\HB}{\Hecke^{\mathrm{B}}}
\newcommand{\scbul}{{\,\raise1pt\hbox{$\scriptscriptstyle\bullet$}\,}}
\nc{\vac}{{\phi}}
\nc{\Bt}{\B_\theta(\g)}
\nc{\be}{\begin{enumerate}}
\nc{\ee}{\end{enumerate}}
\nc{\low}{{\mathrm{low}}}
\nc{\upper}{{\mathrm{up}}}
\nc{\Zodd}{\Z_{\mathrm{odd}}}
\nc{\Ft}[1][n]{\mathbb{P}\mathrm{ol}_{#1}}
\nc{\Ftf}[1][n]{\widetilde{\mathbb{P}\mathrm{ol}}_{#1}}
\nc{\KA}{\on{K}^{\mathrm{A}}}
\nc{\KB}{\on{K}^{\mathrm{B}}}
\nc{\Res}{\on{Res}}
\nc{\Fc}[1][{n,m}]{\mathbf{F}_{#1}}
\nc{\tphi}{\tilde{\varphi}}
\nc{\CO}{\mathscr{O}}
\nc{\inte}{\mathrm{int}}
\nc{\Oint}{\mathcal{O}^{\ge0}_{\inte}}
\nc{\vs}{\vspace*}
\nc{\tLt}{\widetilde{L}}
\nc{\tL}{\widetilde{\Lambda}}
\nc{\tu}{\tilde{u}}
\nc{\noi}{\noindent}
\nc{\heigh}{\mathfrak{t}}
\nc{\lowest}{\mathfrak{l}}
\nc{\rootl}{\mathsf{Q}}
\nc{\cl}{\colon}
\nc{\uqpg}{U'_q(\mathfrak g)}
\nc{\uq}{\uqpg}
\nc{\Oh}{\widehat{\mathcal{O}}}
\nc{\pn}{p_{\mathfrak{n}}}
\nc{\KLR}{KLR algebra}
\nc{\KLRs}{KLR algebras}
\nc{\cor}{\mathbf{k}}
\nc{\cora}{{\cor(A)}}
\nc{\haut}{\mathrm{ht}}
\nc{\tens}{\mathop\otimes}
\nc{\gmod}{\mbox{-$\mathrm{gmod}$}}
\nc{\gMod}{\mbox{-$\mathrm{gMod}$}}
\nc{\proj}{\mbox{-$\mathrm{proj}$}}
\nc{\gproj}{\mbox{-$\mathrm{gproj}$}}
\nc{\smod}{\mbox{-$\mathrm{mod}$}}
\nc{\Mod}{\mbox{-$\mathrm{Mod}$}}
\nc{\h}{\mathfrak h}
\nc{\Rnorm}{R^{\rm{norm}}}
\nc{\Vhat}{\widehat{V}}
\nc{\F}{\mathcal{F}}
\def\T{{\mathcal T}}
\nc{\fd}[1][A]{\on{\mathrm{flat.dim}_{#1}}}
\nc{\bP}{{\mathbb{P}}}
\nc{\bPh}{\widehat{\mathbb{P}}}
\nc{\bK}[1][{n}]{\widehat{\mathbb{K}}_{#1}}
\nc{\bV}[1][{n}]{\widehat{V}^{\otimes{#1}}}
\nc{\bVK}[1][{n}]{\widehat{V}^{\otimes{#1}}_{\widehat{\mathbb{K}}}}
\nc{\hV}{\widehat{V}}
\nc{\opp}{\mathrm{opp}}
\nc{\col}{\colon}
\nc{\bnum}{\be[{\rm(i)}]}
\nc{\oep}{\epsilon}
\nc{\qtext}{\quad\text}
\nc{\qtextq}[1]{\quad\text{#1}\quad}
\nc{\longtwoheadrightarrow}[1][]{\xymatrix{\ar@{->>}[r]^-{{#1}}&}}
\nc{\epiTo}[1][]{\longtwoheadrightarrow[{#1}]}
\nc{\epito}{\twoheadrightarrow}
\nc{\monoTo}[1][]{\xymatrix{\ar@{>->}[r]^-{{#1}}&}}
\nc{\sym}{\mathfrak{S}}
\nc{\inp}[1]{{({#1})_{\mathrm{n}}}}
\nc{\rtl}{\rootl}
\nc{\wtd}{\widetilde}
\nc{\etens}{\boxtimes}
\nc{\ds}[1]{\mathrm{d}(#1)}
\nc{\rmat}[1]{{\mathbf{r}}_%
{\mspace{-2mu}\raisebox{-.6ex}{${\scriptstyle{#1}}$}}}
\nc{\rmats}[1]{{\mathbf{r}}_%
{\mspace{-2mu}\raisebox{-.6ex}{${\scriptscriptstyle{#1}}$}}}
\nc{\shc}{\mathcal{C}}
\nc{\shs}{\mathcal{S}}
\nc{\Fct}{{\on{Fct}}}
\nc{\tC}{\widetilde{\shc}}
\nc{\Zp}{\Z_{\ge0}}
\nc{\tPhi}{\widetilde{\Phi}}
\nc{\tT}{{\widetilde{\T}}}
\nc{\Ob}{\on{Ob}}
\nc{\bwr}{\mbox{\large$\wr$}}
\nc{\Img}{\on{Im}}
\nc{\Ab}{\mathcal{A}^{\mathrm{big}}}
\nc{\Sb}{\mathcal{S}^{\mathrm{big}}}
\nc{\As}{\mathcal{A}}
\nc{\Ss}{\mathcal{S}}
\nc{\ntens}{\widetilde{\otimes}}
\nc{\hR}{\widehat{R}}
\nc{\nconv}{\mathop{\mbox{\large $\odot$}}}
\nc{\snconv}{\mbox{\scriptsize$\odot$}}
\nc{\ts}{\tilde{s}}
\nc{\sho}{\mathcal{O}}
\nc{\bc}{\begin{cases}}
\nc{\ec}{\end{cases}}
\nc{\slnh}{{\widehat{\mathfrak{sl}}_N}}
\nc{\UA}{U_q'(\slnh)}
\nc{\KR}{R_K}
\nc{\cQ}{\mathcal{Q}}
\nc{\Irr}{\mathcal{I}rr}
\nc{\tQ}{\widetilde{\cQ}}
\nc{\bs}{\mathbf{s}}
\nc{\bL}{\mathbb{L}}
\nc{\tg}{\tilde{g}}
\nc{\conv}{\mathbin{\mbox{\large $\circ$}}}
\nc{\shconv}{\mathbin{\large\diamond}}
\nc{\hconv}{\mathbin{\mbox{\Large $\shconv$}}}
\nc{\Rm}{R^{\mathrm{ren}}}
\nc{\bQ}{\ol{Q}}
\nc{\de}{\on{\textfrak{d}}}
\nc{\xmono}{\ar@{>->}}
\nc{\xepi}{\ar@{->>}}
\nc{\db}[1]{\raisebox{-.5ex}[2ex][1.8ex]{$#1$}}
\nc{\wb}[1]{\mbox{$\rule[-1.1ex]{0ex}{2ex}#1$}}
\nc{\univ}{\mathrm{univ}}
\nc{\rM}{{}^*\mspace{-2mu}M}
\nc{\lM}{M^*}
\nc{\uqm}{\uq\smod}
\nc{\tR}{\widetilde{R}_{\gamma,\beta}}
\nc{\tx}{\tilde{x}}
\nc{\bi}{\mathbf{i}}
\nc{\ttau}{\widetilde{\tau}}
\nc{\tEnd}{\on{\widetilde{E}nd}}
\nc{\tHom}{\on{\widetilde{H}om}}
\nc{\K}{{J}}
\nc{\Kex}{{\K}_{\mathrm{ex}}}
\nc{\Kfr}{{\K}_{\mathrm{f\mspace{.01mu}r}}}
\nc{\coro}{\cor}
\nc{\tB}{\widetilde{B}}
\nc{\seed}{\mathscr{S}}
\nc{\up}{\mathrm{up}}
\nc{\bfa}{\mathbf{a}}
\newcommand{\cont}{{\rm cont}}
\newlength{\mylength}
\title
{Supersymmetric polynomials and the center of the walled Brauer algebra}
\author[Ji Hye Jung and Myungho Kim]{Ji Hye Jung$^{1}$ and Myungho Kim$^{2}$}
\address{Sogang research team for discrete and geometric structures,
 Department of Mathematics, Sogang university
  Seoul 04107, Korea}
        \email{jung.ji.hye@hotmail.com}
\address{Department of Mathematics, Kyung Hee University, Seoul 02447, Korea}
\email{mkim@khu.ac.kr}
\thanks{$^{1}$
This research was supported by Sogang Research Team for Discrete
and Geometric Structures
and
by NRF-2013R1A1A2063671. } %\bebl PLEASE INSERT CNRS's GRANTS\ebl
\thanks{$^{1,2}$ 
The authors were supported by the European Research Council under the European Union's Framework Programme H2020 with ERC Grant Agreement number 647353 Qaffine}
\subjclass[2010]
{16U70, %Associative rings and algebras: Center
16T30, %Associative rings and algebras:Connecton to combinatorics
05E05. %Algebraic combinatorics': Symmetric functions and generalizations
}
\date{\today}
\begin{document}

\begin{abstract}
 We study a commuting family of elements of the walled Brauer algebra $B_{r,s}(\delta)$, called the \emph{Jucys-Murphy elements}, and show that the supersymmetric polynomials in these elements belong to the center of the walled Brauer algebra.
When $B_{r,s}(\delta)$ is semisimple, we show that those supersymmetric polynomials generate the center.
Under the same assumption,
we define a maximal commutative subalgebra of $B_{r,s}(\delta)$, called the \emph{Gelfand-Zetlin subalgebra}, and show that it  is generated by the Jucys-Murphy elements.
 As an application, we construct a complete set of primitive orthogonal  idempotents of $B_{r,s}(\delta)$, when it is semisimple.
We also give an alternative proof of a part of the classification theorem of blocks of $B_{r,s}(\delta)$ in non-semisimple cases, which appeared in
the work of Cox-De~Visscher-Doty-Martin.
Finally, we present an analogue of Jucys-Murpy elements for the quantized walled Brauer algebra
$H_{r,s}(q,\rho)$ over $\C(q, \rho)$ and by taking the classical limit we show that the supersymmetric polynomials in these elements generates the center. 
It follows that  H. Morton conjecture, which appeared in the study of the relation between the framed HOMFLY skein on the annulus and that on the rectangle with designated  boundary points, holds if we extend the scalar from  $\Z[q^{\pm1},\rho^{\pm1}]_{(q-q^{-1})}$ to $\C(q, \rho)$.
\end{abstract}

\maketitle

%\tableofcontents

%%%%%%%%%%%%%%%%%%%%%%%%%%%%%%%%%%%%%%%%%%%%%%%%%%%%%%%%%%%%%%%%%%%%%%%%%%%%%%%%%%%%%%%%%%%%%%%%%%%%%%%%%%%%%%%%%%%%%
\section*{Introduction}
The \emph{Jucys-Murphy elements} of the group algebra $\C[\sym_r]$ of the symmetric group of $r$ letters are given by
\eqn
 L_k:= \sum_{j=1}^{k-1} (j,k)  \quad (1 \le k \le r),
\eneqn
where $(a,b)$ denotes the transposition exchanging $a$ and $b$ for $1\le a, b \le  r$.
In particular, $L_1=0$ and $L_k$'s are commuting to each other.
These elements were introduced independently in \cite{Jucys, Murphy81} and it was shown that the center of
$\C[\sym_r]$ consists of all the symmetric polynomials in these elements (\cite{Jucys,Murphy}).
This remarkable fact leads various interesting studies.
For example,
 the ring homomorphism from the ring of symmetric polynomials to the center
of $\C[\sym_r]$, which is called the \emph{Jucys-Murphy specialization}, has been studied by many researchers.
Let $f(x_1,\ldots, x_r)$ be a symmetric polynomial.
Since the evaluation $f(L_1, \cdots, L_r)$ belongs to the center of $\C[\sym_r]$,
it can be written uniquely as a linear combination of the natural basis  $\set{ C_{\mu}}{\mu \ \text{is a partition of } r}$ of the center, where $C_{\mu}$ denotes the sum of all
 permutations with the same cycle type $\mu$.
 That is, in the center of $\C[\sym_r]$, we have  an equation
$$f(L_1, \ldots, L_r)=\sum_{\mu} a_{\mu}^{f } C_{\mu} \quad  (a^f_{\mu} \in\C). $$
The problem to calculate coefficients $a^{ f }_{\mu}$ for various symmetric polynomials $f$ is called the \emph{class expansion problem}.
For the elementary symmetric polynomial $e_k$ and the power sum symmetric polynomial $p_k$,
the class expansion problem has been completely solved in \cite{Jucys} and \cite{LascouxThibon}, respectively.
 For the monomial symmetric polynomial $m_{\la}$, a description of
$a^{m_{\la}}_\mu$ for the partitions $\mu$ such that $\ell(\mu) = r-|\la|$
was given in
\cite{MatsumotoNovak}, where $\ell(\mu)$ denotes the number of nonzero parts of $\mu$ and $|\la|$ denotes the sum of  all the parts of $\la$.
In \cite{Lassalle}, Lassalle solved the problem for a large family of symmetric polynomials including complete homogeneous symmetric polynomials $h_k$. Feray reproduced  Lassalle's result in a different way (\cite{Feray}).
Another application of Jucys-Murphy elements is Okounkov and Vershik's
beautiful approach to the representation theory of the symmetric groups (\cite{OV2}).
 Observing simultaneous eigenspaces and eigenvalues of the Jucys-Murphy elements $L_1, \ldots, L_{r}$ in an irreducible $\C[\sym_r]$-module,
 they gave a natural explanation about the appearance of Young diagrams and standard tableaux in the representation theory of symmetric groups.

The \emph{walled Brauer algebra} $B_{r,s}(\delta)$ was introduced independently in \cite{Koike} and \cite{Turaev}
 (See also \cite{BCHLLS}).
  When $\delta=n$, it was studied as the centralizer algebra of the action of general linear Lie algebra $\mathfrak{gl}(n)$ on the mixed tensor space.
Moreover, when $\delta=m-n$,
it is related with the centralizer algebra of the action of general linear Lie superalgebra $\mathfrak{gl}(m,n)$
on the mixed tensor superspace (See, for example, \cite{BS,CW,SM}).
We call them the \emph{mixed Schur-Weyl dualities}.
As the mixed Schur-Weyl duality can be regarded as the generalization of the Schur-Weyl duality,
one can consider the walled Brauer algebra as a natural generalization of the group algebra $\C[\sym_r]$ of the symmetric group $\sym_r$.
Actually, in the diagrammatic description of the walled Brauer algebra  $B_{r,s}(\delta)$, it is easily seen that a copy of the group algebra  $\C[\sym_r]$ of the symmetric group $\sym_r$ is contained in it as a subalgebra.
Thus it is natural to try to find a nice family of elements of $B_{r,s}(\delta)$ containing the Jucys-Murphy elements of $\C[\sym_r]$ and possessing
similar properties with them.

In \cite{BS}, Brundan and Stroppel defined a family of Jucys-Murphy elements $x_1^R,\ldots,x_{r+s}^R$ of $B_{r,s}(\delta)$
 and conjectured that the symmetric polynomials in $x_1^R, \ldots, x_{r+s}^R$ generate the center of $B_{r,s}(\delta)$.
 In \cite{SartoriStroppel}, Sartori and Stroppel worked in more general setting,
 which is called the \emph{walled Brauer category}.
 The category includes the usual walled Brauer algebra $B_{r,s}(\delta)$ as an idempotent truncation.
 If our focus restricts to the case of $B_{r,s}(\delta)$, they defined the Jucys-Murphy elements $\xi_1, \ldots, \xi_{r+s}$ and conjectured that the \emph{doubly symmetric polynomials which satisfy the Q-cancellation property with respect to the $r$-th and $(r+1)$-th variables in $\xi_1, \ldots, \xi_{r+s}$} generate the center of $B_{r,s}(\delta)$.
 A doubly symmetric polynomial which satisfies the above Q-cancellation property
 is also called a \emph{supersymmetric polynomial} in other literatures:
 it is a polynomial in $x_1,\ldots,x_r,y_1,\ldots,y_s$, symmetric in $x_1,\ldots,x_r$ and in
 $y_1,\ldots,y_s$ respectively, and the substitution $x_r=-y_1=t$ yields a polynomial in $x_1,\ldots,x_{r-1}, y_2,\ldots,y_s$, which is independent of $t$.
%To the best of our knowledge, this kind of remarkable connection between the supersymmetric polynomials and the center of the walled Brauer algebra was observed for the first time by Sartori in the context of  the  degenerate affine  walled Brauer algebras (\cite[Theorem 4.2]{Sartori}).
%COMMENT:SINCE NORTON ALREADY NOTICED A CONNECTION BETWEEN THE CENTER OF QUANTUM WALLED BRAUER ALGERAS AND THE SUPERSYMMETRIC POLYNOMIALS, IT IS NOT FAIR TO GIVE THE CREDIT TO SARTORI.
Note also that in \cite{RuiSu}, Rui and Su introduced a family of elements of $B_{r,s}(\delta)$, called the \emph{Jucys-Murphy-like elements}, in  their study of affine walled Brauer algebras.

 We are strongly motivated by the conjecture of \cite{SartoriStroppel}.
In this paper, we define a  %new
family of Jucys-Murphy elements $L_1, \ldots, L_{r+s}$ of $B_{r,s}(\delta)$, which  can be regarded as  a modification of the ones in \cite{SartoriStroppel},
and show that supersymmetric polynomials in $L_1, \ldots, L_{r+s}$ is central in $B_{r,s}(\delta)$.
%Note that our Jucys-Murphy elements are similar to the ones in \cite{BS, SartoriStroppel}, but different.
Here, we make use of some relations between generators of $B_{r,s}(\delta)$ and the Jucys-Murphy elements (Proposition \ref{prop:commuting relations with generators}).
Another key ingredient is a theorem by  Stembridge \cite{Stem} saying that the ring of supersymmetric polynomials is generated by the \emph{power sum supersymmetric polynomials}.
 We study the eigenvalues of the supersymmetric polynomials in Jucys-Murphy elements on the cell modules over $B_{r,s}(\delta)$.
Our main theorem is that for the case when $B_{r,s}(\delta)$ is semisimple, which is the case except finitely many values $\delta$, the supersymmetric polynomials in $L_1, \ldots, L_{r+s}$ generate the center of $B_{r,s}(\delta)$ (Theorem \ref{thm:main}).
%In order to show it,
It follows by a modification of an argument, which was used by Li in \cite{Li} to produce a certain family of symmetric polynomials.
As an application, we can mimic the Okounkov-Vershik's approach to the representation theory of the symmetric groups: when $B_{r,s}(\delta)$ is semisimple, we define the \emph{Gelfand-Zetlin subalgebra} of $B_{r,s}(\delta)$ as the subalgebra generated by centers of certain naturally chosen subalgebras, which are isomorphic to walled Brauer algebras of lower ranks, and show that it is generated by the Jucys-Murphy elements $L_1,\ldots, L_{r+s}$.
Note that the Gelfand-Zetlin subalgebra is a maximal commutative subalgebra of $B_{r,s}(\delta)$, since the branching graph of $B_{r,s}(\delta)$ with respect to our choice of the family of subalgebras is multiplicity-free.
A complete set of primitive orthogonal idempotents of $B_{r,s}(\delta)$ can now be constructed easily.
%Lastly,
In addition,
by observing a connection between the eigenvalues of the supersymmetric polynomials in Jucys-Murphy elements  and the conditions in the characterization of blocks of $B_{r,s}(\delta)$ in non-semisimple cases, we can recover a part of the classification theorem of blocks of $B_{r,s}(\delta)$ appeared in \cite{CDDM} (Proposition \ref{prop:blocks}).
This strengthens our belief that the center of $B_{r,s}(\delta)$ is generated by the supersymmetric polynomials in Jucys-Murphy elements, even when $B_{r,s}(\delta)$ is not semisimple
(Conjecture \ref{conj:center}).
Lastly, we consider the case of the \emph{quantized walled Brauer algebras}.
A family of one parameter deformation  of $B_{r,s}(N)$ $(N \in \Z_{\ge 0})$ has been appeared in \cite{KM2} and a two parameter version has been introduced in  \cite{KM1} and  \cite{Leduc}.
 Surprisingly enough, in his study of a connection between the \emph{framed HOMFLY skein module on the annulus} and the one on the rectangle with designated input and output boundary points (\cite{Morton}), 
Hugh Morton conjectured that the center of the quantized walled Brauer algebra over 
 $\Lambda$, a certain localization of the ring of Laurent polynomials of two variables, 
%the ring $\Lambda:=\Z[q^{\pm1},\rho^{\pm1}]_{(q-q^{-1})}$  
is generated by the supersymmetric polynomials in some commuting elements, so called  \emph{Murphy operators}, which is a generalization of  the ones in \cite{DJ}.
It turns out that Morton's elements are natural deformation of our Jucys-Murphy elements (See Definition \ref{def:quantum JM} and Remark \ref{rem:quantum JM element}(2)). By taking a suitable limit sending $q$ to $1$, we can use our main theorem to show that the supersymmetric polynomials in those elements  generate the center of the quantized walled Brauer algebra over $\C(q,\rho)$ and hence Morton's conjecture holds provided the base ring is extended from $\Lambda$ to $\C(q,\rho)$.

This paper is organized as follows:
In Section 1, we briefly recall the definition of walled Brauer algebras $B_{r,s}(\delta)$ and
their cell modules.
In Section 2, we introduce the Jucys-Murphy elements $L_1, \ldots, L_{r+s}$ of $B_{r,s}(\delta)$
and  prove  several  relations between generators of $B_{r,s}(\delta)$ and the Jucys-Murphy elements.
Using these relations, we  show that the supersymmetric polynomials in $L_1, \ldots, L_{r+s}$ belong to the center of $B_{r,s}(\delta)$.
In Section 3, we calculate the eigenvalues of supersymmetric polynomials in the Jucys-Murphy elements on  cell modules.  Based on this calculation, we prove that when $B_{r,s}(\delta)$ is semisimple, the supersymmetric polynomials in $L_1, \ldots, L_{r+s}$ generates the center of $B_{r,s}(\delta)$.
 In Section 4, we define the Gelfand-Zetlin subalgebra of $B_{r,s}(\delta)$
 and show that it is generated by $L_1, \ldots, L_{r+s}$ when $B_{r,s}(\delta)$ is semisimple.
 %Finally,
 In Section 5, we give an alternative proof of a part of the classification theorem of blocks of $B_{r,s}(\delta)$
 given in \cite{CDDM}.
 Lastly,  we present an analogue of Jucys-Murpy elements for the quantized walled Brauer algebra
$H_{r,s}(q,\rho)$ and we show that the supersymmetric polynomials in these elements generate the center. %, when  $H_{r,s}(q,\rho)$ is semisimple.
%It was  conjectured by Hugh Morton in the study of the relation between the HOMFLY skein on the annulus and the center of $H_{r,s}(q,\rho)$.

\vskip1em
\emph{Acknowledgements.} The authors would like to thank  Jonathan Brundan and Catharina Stroppel  for valuable discussions on their Jucys-Murphy elements, and thank %to
  Antonio Sartori for %the
explaining the modification in Remark \ref{rem:different}.
 They also thank  Hugh Morton for a valuable comment.
\vskip1em

\vskip1em
%%%%%%%%%%%%%%%%%%%%%%%%%%%%%%%%%%%%%%%%%%%%%%%%%%%%%%%%%%%%%%%%%%%%%%%%%%%%%%%%%%%%%%%%%%%%%%%%%%%%%%%%%%%%%%%%%%%%%%
\section{Walled Brauer algebras and their Representations}

In this section, we will recall the  walled Brauer algebras and their  cell modules. We mainly follow the exposition in the papers \cite{BS,CDDM}.

\subsection{Walled Brauer algebras}
Let $r$ and $s$ be nonnegative integers.
  An \emph{$(r,s)$-walled Brauer diagram} is a graph consisting of two rows with $r+s$ vertices in each row such that
  the following conditions hold:
  \begin{enumerate}
\item[\rm{(1)}]  Each vertex is connected by a strand to exactly one other vertex.
\item[\rm{(2)}] There is a vertical wall which separates the first $r$ vertices from the last $s$ vertices in each row.
\item[{\rm (3)}]  A \emph{vertical strand} connects a vertex on the top row with one on the bottom row, and it
cannot cross the wall.  A  \emph{horizontal strand} connects vertices in the same row, and  it must cross the wall.
%\item[{\rm (4)}]  No loops are permitted  in an $(r,s)$-Brauer diagram.
\end{enumerate}

Note that the vertical strands  are called the \emph{propagating lines} and the horizontal strands on the top row (respectively, on the bottom row) are called the \emph{northern arcs} (respectively, \emph{southern arcs})  in \cite{CDDM}.

Let $\delta$ be a complex number
and let us denote $B_{r,s}(\delta)$  the $\C$-vector space spanned by the basis consisting of all the $(r,s)$-walled Brauer diagrams.
The dimension of  $B_{r,s}(\delta)$ equals to  $(r+s) !$ (see, for example \cite[(2.2)]{BS}).
We define a multiplication of $(r,s)$-walled Brauer diagrams as follows:
For $(r,s)$-walled Brauer diagrams $d_1,d_2$,
we put $d_1$ under $d_2$ and identify the top vertices
of $d_1$ with the bottom vertices of $d_2$.
We remove the loops in the middle row, if there exist.
Then thus obtained diagram, denoted by $d_1 * d_2$, is again an $(r,s)$-walled Brauer diagram.
We define the multiplication of $d_1$ by $d_2$
\eqn
d_1 d_2 : = \delta^n \, d_1 * d_2 \in B_{r,s}(\delta),
\eneqn
where $n$ denotes the number of loops we removed in the middle row.
Extending this multiplication by linearity,
we obtain a multiplication on  $B_{r,s}(\delta)$, which can be easily seen to be associative.
We call thus obtained $\C$-algebra the \emph{walled Brauer algebra}.

For each $(r,s)$-walled Brauer diagram, we number the vertices in each row of it by $1,2,$ $\ldots,$ $r+s$ in the order from left to right.
Then we have the following set of generators $$\set{s_i}{1 \le i \le r-1} \cup\set{ s_j}{  r+1  \le j \le r+s-1} \cup \{e_{r,r+1}\}$$
of the algebra $B_{r,s}(\delta)$ given by
\vskip 1em
 ${\beginpicture
\setcoordinatesystem units <0.78cm,0.39cm>
\setplotarea x from 0 to 9, y from -1 to 4
\put{$s_i:= $} at 0 1.5
\put{$\bullet$} at  1 0  \put{$\bullet$} at  1 3
\put{$\bullet$} at  2 0  \put{$\bullet$} at  2 3
\put{$\bullet$} at  3 0  \put{$\bullet$} at  3 3
\put{$\bullet$} at  4 0  \put{$\bullet$} at  4 3
\put{$\bullet$} at  5 0  \put{$\bullet$} at  5 3
\put{$\bullet$} at  6 0  \put{$\bullet$} at  6 3
\put{$\bullet$} at  7 0  \put{$\bullet$} at  7 3
\put{$\bullet$} at  8 0  \put{$\bullet$} at  8 3

\put{$\cdots$} at 1.5 1.5
\put{$\cdots$} at 5.5 1.5
\put{$\cdots$} at 7.5 1.5
\put{{\scriptsize$i$}} at 3 4
\put{{\scriptsize$i+1$}} at 4 4
\put{,} at 8.5 1
\plot 1 3 1 0 /
\plot 2 3 2 0 /
\plot 3 3 4 0 /
\plot 4 3 3 0 /
\plot 5 3 5 0 /
\plot 6 3 6 0 /
\plot 7 3 7 0 /
\plot 8 3 8 0 /
\setdashes  <.4mm,1mm>
\plot 6.5 -1   6.5 4 /
\setsolid
\endpicture}$
${\beginpicture \setcoordinatesystem units <0.78cm,0.39cm>
\setplotarea x from 0 to 9, y from -1 to 4
\put{$s_j:= $} at 0 1.5
\put{$\bullet$} at  1 0
\put{$\bullet$} at  1 3
\put{$\bullet$} at 2 0
\put{$\bullet$} at  2 3
\put{$\bullet$} at  3 0
\put{$\bullet$} at 3 3
\put{$\bullet$} at  4 0  \put{$\bullet$} at 4 3
\put{$\bullet$} at  5 0  \put{$\bullet$} at  5 3
\put{$\bullet$} at 6 0 \put{$\bullet$} at  6 3
\put{$\bullet$} at  7 0 \put{$\bullet$} at 7 3
\put{$\bullet$} at  8 0  \put{$\bullet$} at 8 3

\put{$\cdots$} at 1.5 1.5
\put{$\cdots$} at 3.5 1.5
\put{$\cdots$} at 7.5 1.5
\put{,} at 8.5 1
\put{{\scriptsize$j$}} at 5 4
\put{{\scriptsize$j+1$}} at 6 4

\plot 1 3 1 0 /
\plot 2 3 2 0 /
\plot 3 3 3 0 /
\plot 4 3 4 0 /
\plot 5 3 6 0 /
\plot 6 3 5 0 /
\plot 7 3 7 0 /
\plot 8 3 8 0 /
\setdashes  <.4mm,1mm>
\plot 2.5 -1   2.5 4 /
\setsolid
\endpicture}$\\

$${\beginpicture
\setcoordinatesystem units <0.78cm,0.39cm>
%\setplotarea x from 0 to 6, y from -2 to 3
\put{$e_{r,r+1} := $} at -0.5 1.5
\put{$\bullet$} at  1 0  \put{$\bullet$} at  1 3
\put{$\bullet$} at  3 0  \put{$\bullet$} at  3 3
\put{$\bullet$} at  4 0  \put{$\bullet$} at  4 3
\put{$\bullet$} at  5 0  \put{$\bullet$} at  5 3
\put{$\bullet$} at  6 0  \put{$\bullet$} at  6 3
\put{$\bullet$} at  8 0  \put{$\bullet$} at  8 3

\put{$\cdots$} at 2 1.5
\put{$\cdots$} at 7 1.5
\put{.} at 8.7 1

 \put{{\scriptsize $1$}} at 1 4
\put{{\scriptsize $r$}} at 4 4
\put{{\scriptsize $r+1$}} at 5 4
\put{{\scriptsize $r+s$}} at 8 4

\plot 1 3 1 0 /
\plot 3 3 3 0 /
\plot 8 3 8 0 /
\plot 6 3 6 0 /
%\arrow <3 pt> [1,2] from 2 0 to  2 1.7
%\arrow <3 pt> [1,2] from 1.45 0.75 to  1.4 0.75
%\arrow <3 pt> [1,2] from 4 0 to 3.5 1.5
\setdashes  <.4mm,1mm>
\plot 4.5 -1   4.5 4 /
\setsolid
\setquadratic
\plot 4 3 4.5 2 5 3 /
\plot 4 0 4.5 1 5 0  /
\endpicture}$$

\vskip1mm
Let $0 \le t, t' \le \min(r,s)$. The subalgebra of $B_{r,s}(\delta)$ generated by $s_1,\ldots, s_{r-t-1}$ and $s_{r+t'+1}, \ldots, s_{r+s-1}$ can be identified with the group algebra $\C[\sym_{r-t}  \times  \sym_{s-t'}] \simeq \C[\sym_{r-t}] \otimes \C[\sym_{s-t'}]$, where $\sym_k$ denotes  the symmetric group of $k$ letters.
We will use this identification for the rest of the paper.
Let us define
\eqn
&& (a,b)=(b,a):= s_{b-1} \cdots s_{a+1} s_a s_{a+1}  \cdots s_{b-1},
 \ \text{for } 1 \le a < b \le r \ \text{or} \  r+1 \le a < b \le r+s, \\
&&  e_{j,k} :=   (s_{k-1} \cdots s_{r+2} s_{r+1}) (s_j \cdots s_{r-2} s_{r-1}) e_{r,r+1}  (s_{r-1} s_{r-2} \cdots s_{j})(s_{r+1} s_{r+2} \cdots s_{k-1}) \\
&&   \quad \hskip23em \text{for} \ 1\le j \le r, \  r+1 \le k \le r+s.
\eneqn
Indeed, the above elements have simple forms as diagrams:
%Indeed, the element $(a,b)$ is the $(r,s)$-walled Brauer diagram  all whose strands are the  vertical strands connecting the vertices with the same  numbering  on the top row and on the bottom row,  except  two: the vertical strand connecting the $a$-th vertex on the top  row with the $b$-th vertex on the bottom row and the vertical strand connecting the $b$-th vertex on the top  row with the $a$-th vertex on the  bottom row. As an element in $\C[\sym_r\times \sym_s]$, $(a,b)$ is the transposition exchanging $a$ and $b$.
%The element $e_{j,k}$ is the $(r,s)$-walled Brauer diagram such that  each strand  connects the vertices with the same numbering on the top row and on the bottom row, except  two: the horizontal strand connecting the $j$-th vertex  with  $k$-vertex on the top  row  and the horizontal strand connecting the $j$-th vertex  with the $k$-th vertex on the  bottom row.
%%%(a,b)=(b,a)-first%%%%%%%%%%%%%%%%%%%%%%%%%%%%%%%%%%%%%%%%%%%%%%%%%%%%%%%%%%%%%%%%%%%%%%%
%%%%%%%%%%%%%%%%%%%%%%%%%%%%%%%%%%%%%%%%%%%%%%%%%%%%%%%%%%%%%%%%%%%%%%%%%%%
\vskip0.5em
$(a,b)=(b,a)= $
$$\hskip-2em{\beginpicture
\setcoordinatesystem units <0.78cm,0.39cm>
\setplotarea x from 0 to 9, y from -1 to 4
%\put{$(a,b)=(b,a)= $} at -1.5 5
\put{$\bullet$} at  1 0  \put{$\bullet$} at  1 3
\put{$\bullet$} at  3 0  \put{$\bullet$} at  3 3
\put{$\bullet$} at  4 0  \put{$\bullet$} at  4 3
\put{$\bullet$} at  5 0  \put{$\bullet$} at  5 3
\put{$\bullet$} at  7 0  \put{$\bullet$} at  7 3
\put{$\bullet$} at  8 0  \put{$\bullet$} at  8 3
\put{$\bullet$} at  9 0  \put{$\bullet$} at  9 3
\put{$\bullet$} at  11 0  \put{$\bullet$} at  11 3
\put{$\bullet$} at  12 0  \put{$\bullet$} at  12 3
\put{$\bullet$} at  14 0  \put{$\bullet$} at  14 3

\put{{\scriptsize$a$}} at 4 4
\put{{\scriptsize$b$}} at 8 4
\put{{\scriptsize$1$}} at 1 4
\put{{\scriptsize$r$}} at 11 4
\put{{\scriptsize$r+1$}} at 12 4
\put{{\scriptsize$r+s$}} at 14 4

\put{$\cdots$} at 2 1.5
\put{$\cdots$} at 6 2
\put{$\cdots$} at 10 1.5
\put{$\cdots$} at 13 1.5

\put{if $1 \le a<b\le r,$} at 16.5 1.5

\plot 1 0 1 3 /
\plot 3 0 3 3 /
\plot 5 0 5 3 /
\plot 7 0 7 3 /
\plot 9 0 9 3 /
\plot 11 0 11 3 /
\plot 12 0 12 3 /
\plot 14 0 14 3 /

\plot 4 0 8 3 /
\plot 8 0 4 3 /

\setdashes  <.4mm,1mm>
\plot 11.5 -1   11.5 4 /
\setsolid
\endpicture}$$

%%%(a,b)=(b,a)-second%%%%%%%%%%%%%%%%%%%%%%%%%%%%%%%%%%%%%%%%%%%%%%%%%%%%%%%%%%%%%%%%%%%%%%%
%%%%%%%%%%%%%%%%%%%%%%%%%%%%%%%%%%%%%%%%%%%%%%%%%%%%%%%%%%%%%%%%%%%%%%%%%%%
\vskip0.5em
$$\ \ {\beginpicture
\setcoordinatesystem units <0.78cm,0.39cm>
\setplotarea x from 0 to 9, y from -1 to 4
%\put{$(a,b)=(b,a)= $} at -1.5 1.5
\put{$\bullet$} at  1 0  \put{$\bullet$} at  1 3
\put{$\bullet$} at  3 0  \put{$\bullet$} at  3 3
\put{$\bullet$} at  4 0  \put{$\bullet$} at  4 3
\put{$\bullet$} at  6 0  \put{$\bullet$} at  6 3
\put{$\bullet$} at  7 0  \put{$\bullet$} at  7 3
\put{$\bullet$} at  8 0  \put{$\bullet$} at  8 3
\put{$\bullet$} at  10 0  \put{$\bullet$} at  10 3
\put{$\bullet$} at  11 0  \put{$\bullet$} at  11 3
\put{$\bullet$} at  12 0  \put{$\bullet$} at  12 3
\put{$\bullet$} at  14 0  \put{$\bullet$} at  14 3

\put{{\scriptsize$a$}} at 7 4
\put{{\scriptsize$b$}} at 11 4

\put{{\scriptsize$1$}} at 1 4
\put{{\scriptsize$r$}} at 3 4
\put{{\scriptsize$r+1$}} at 4 4
\put{{\scriptsize$r+s$}} at 14 4

\put{$\cdots$} at 2 1.5
\put{$\cdots$} at 5 1.5
\put{$\cdots$} at 9 2
\put{$\cdots$} at 13 1.5

\put{if $r+1 \le a<b\le r+s,$} at 17 1.5

\plot 1 0 1 3 /
\plot 3 0 3 3 /
\plot 4 0 4 3 /
\plot 6 0 6 3 /
\plot 8 0 8 3 /
\plot 10 0 10 3 /
\plot 12 0 12 3 /
\plot 14 0 14 3 /

\plot 7 0 11 3 /
\plot 11 0 7 3 /

\setdashes  <.4mm,1mm>
\plot 3.5 -1   3.5 4 /
\setsolid
\endpicture}$$
and
%%%e_{j,k}%%%%%%%%%%%%%%%%%%%%%%%%%%%%%%%%%%%%%%%%%%%%%%%%%%%%%%%%%%%%%%%
%%%%%%%%%%%%%%%%%%%%%%%%%%%%%%%%%%%%%%%%%%%%%%%%%%%%%%%%%%%%%%%%%
\vskip0.5em
$ \  {\beginpicture
\setcoordinatesystem units <0.78cm,0.39cm>
\setplotarea x from 0 to 9, y from -1 to 4
\put{$e_{j,k}= $} at 0 1.5
\put{$\bullet$} at  1 0  \put{$\bullet$} at  1 3
\put{$\bullet$} at  3 0  \put{$\bullet$} at  3 3
\put{$\bullet$} at  4 0  \put{$\bullet$} at  4 3
\put{$\bullet$} at  5 0  \put{$\bullet$} at  5 3
\put{$\bullet$} at  7 0  \put{$\bullet$} at  7 3
\put{$\bullet$} at  8 0  \put{$\bullet$} at  8 3
\put{$\bullet$} at  10 0  \put{$\bullet$} at  10 3
\put{$\bullet$} at  11 0  \put{$\bullet$} at  11 3
\put{$\bullet$} at  12 0  \put{$\bullet$} at  12 3
\put{$\bullet$} at  14 0  \put{$\bullet$} at  14 3

\put{{\scriptsize$j$}} at 4 4
\put{{\scriptsize$k$}} at 11 4

\put{{\scriptsize$1$}} at 1 4
\put{{\scriptsize$r$}} at 7 4
\put{{\scriptsize$r+1$}} at 8 4
\put{{\scriptsize$r+s$}} at 14 4

\put{$\cdots$} at 2 1.5
\put{$\cdots$} at 6 1.5
\put{$\cdots$} at 9 1.5
\put{$\cdots$} at 13 1.5

\put{.} at 14.7 1

\plot 1 0 1 3 /
\plot 3 0 3 3 /
\plot 5 0 5 3 /
\plot 7 0 7 3 /
\plot 8 0 8 3 /
\plot 10 0 10 3 /
\plot 12 0 12 3 /
\plot 14 0 14 3 /

\setdashes  <.4mm,1mm>
\plot 7.5 -1   7.5 4 /
\setsolid

\setquadratic
\plot 4 0 7.5 1 11 0 /
\plot 4 3 7.5 2 11 3 /
\endpicture}$

\noindent Note that in \cite{BS}, the elements $(a,b)$ and $-e_{j,k}$ are called the \emph{transpositions in $B_{r,s}(\delta)$}.

There is a criterion of the semisimplicity for $B_{r,s}(\delta)$:
\begin{prop} {\rm(}\cite[Theorem 6.3]{CDDM}{\rm )} \label{prop:semisimplicity}
The walled Brauer algebra $B_{r,s}(\delta)$ is semisimple if and only if
one of the followings holds:
\begin{enumerate}[\rm(1)]
\item $r=0$ or $s=0$,
\item $\delta \notin \Z$,
\item $|\delta| > r+s-2$,
\item $\delta=0, \quad \text{and} \quad (r,s) \in \{(1,2),  (1,3),  (2,1),  (3,1)\}$.
\end{enumerate}
\end{prop}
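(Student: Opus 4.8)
The plan is to deduce the criterion from the cellular structure of $B_{r,s}(\delta)$ together with an inductive reduction along the tower $\cdots\subset B_{r-1,s-1}(\delta)\subset B_{r,s}(\delta)$ furnished by the element $e_{r,r+1}$. First I would dispose of case (1): if $s=0$ (and symmetrically if $r=0$), then no strand may cross the wall, so every $(r,0)$-walled Brauer diagram is a permutation of the top row, $B_{r,0}(\delta)\simeq\C[\sym_r]$ independently of $\delta$, and this is semisimple over $\C$. So assume henceforth $r,s\ge1$. Recall that $B_{r,s}(\delta)$ is a cellular algebra whose cell modules $\Delta(\lambda,\mu)$ are indexed by pairs of partitions with $\lambda\vdash r-t$ and $\mu\vdash s-t$ for $0\le t\le\min(r,s)$ (see \cite{CDDM}). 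As for any cellular algebra, $B_{r,s}(\delta)$ is semisimple if and only if the cellular bilinear form on every $\Delta(\lambda,\mu)$ is nondegenerate, i.e.\ if and only if the total Gram determinant $G_{r,s}(\delta)\seteq\prod_{(\lambda,\mu)}\det\bigl(\text{Gram form on }\Delta(\lambda,\mu)\bigr)$ is nonzero. So the statement reduces to computing $G_{r,s}(\delta)$ and locating its zeros in $\delta$.

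To compute $G_{r,s}(\delta)$ I would induct on $r+s$ via a localization (tower-of-recollement) argument. When $\delta\ne0$, the element $e\seteq\delta^{-1}e_{r,r+1}$ is an idempotent with $eB_{r,s}(\delta)e\simeq B_{r-1,s-1}(\delta)$ and $B_{r,s}(\delta)/\bigl(B_{r,s}(\delta)\,e\,B_{r,s}(\delta)\bigr)\simeq\C[\sym_r\times\sym_s]$, so the localization functor $M\mapsto eM$ and its adjoints control the module category: $B_{r,s}(\delta)$ is semisimple precisely when $B_{r-1,s-1}(\delta)$ is semisimple and, in addition, the ``new'' cell modules --- those with $t=0$, which as $\C[\sym_r\times\sym_s]$-modules are automatically irreducible --- have nondegenerate Gram form and admit no extensions with the lower ones. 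Unwinding the Jones basic construction at each level, one finds that $G_{r,s}(\delta)$ is a nonzero scalar times a product of linear factors $\delta+c$, with $c$ ranging (with positive multiplicities) over an explicit finite set $C_{r,s}\subset\Z$ of integer ``contents''; the combinatorial core of the whole argument is the identity $C_{r,s}=\{c\in\Z:|c|\le r+s-2\}$, with the sole exception that $0\notin C_{r,s}$ exactly when $(r,s)\in\{(1,2),(1,3),(2,1),(3,1)\}$. Granting this, $G_{r,s}(\delta)=0$ if and only if $\delta\in\Z$, $|\delta|\le r+s-2$, and moreover $\delta\ne0$ or $(r,s)$ is not one of those four pairs; passing to complements produces exactly the disjunction of (2), (3), (4) under the hypothesis $r,s\ge1$ (here (2) uses that the polynomial $G_{r,s}$ has only integer roots, and (3) uses $C_{r,s}\subseteq\{-(r+s-2),\dots,r+s-2\}$). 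As an independent sanity check on the large-$|\delta|$ direction, for $\delta=N$ a sufficiently large positive integer the mixed Schur--Weyl duality gives $B_{r,s}(N)\simeq\End_{\mathrm{GL}_N}\bigl(V^{\otimes r}\otimes(V^*)^{\otimes s}\bigr)$, which is plainly semisimple.

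The case $\delta=0$ has to be treated by hand, because then $e_{r,r+1}$ is nilpotent ($e_{r,r+1}^2=\delta\,e_{r,r+1}=0$), the algebra is no longer quasi-hereditary, and the idempotent reduction collapses. Here I would check directly that $0\notin C_{r,s}$ for the four small pairs --- for instance by computing the Wedderburn decompositions of $B_{1,2}(0)$, $B_{2,1}(0)$, $B_{1,3}(0)$, $B_{3,1}(0)$ --- and, for every other $(r,s)$ with $r,s\ge1$, exhibiting a non-semisimple corner algebra or quotient (already $B_{1,1}(0)\simeq\C[x]/(x^2)$ is non-semisimple), so that non-semisimplicity propagates upward. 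The main obstacle is the explicit evaluation of the Gram determinants --- above all, pinning down precisely which integer contents occur with nonzero multiplicity in $C_{r,s}$, which is the genuine combinatorial content of the theorem --- together with these exceptional $\delta=0$ cases, which follow no uniform pattern and must be verified individually.
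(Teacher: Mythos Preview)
The paper does not prove this proposition: it is quoted from \cite[Theorem~6.3]{CDDM} and used as a black box, so there is no in-paper proof to compare against.

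That said, your outline is broadly the strategy of \cite{CDDM} itself (cellularity, localization along the idempotent $\delta^{-1}e_{r,r+1}$ for $\delta\ne0$, separate treatment of $\delta=0$), and you are candid that the real content---determining exactly which integers appear as roots of the global Gram determinant---is not carried out (``Granting this''). So what you have written is a plan, not a proof. One concrete gap in your $\delta=0$ discussion: the claim that non-semisimplicity ``propagates upward'' from $B_{1,1}(0)$ does not cover all the cases you need. The tower $B_{r-1,s-1}(0)\subset B_{r,s}(0)$ only moves diagonally in $(r,s)$, and for $\delta=0$ the idempotent trick is unavailable (so a non-semisimple subalgebra tells you nothing). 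For example, $B_{1,4}(0)$ reduces to $B_{0,3}(0)\simeq\C[\sym_3]$ and $B_{2,3}(0)$ reduces to $B_{1,2}(0)$, both semisimple, so neither case is reached from $B_{1,1}(0)$; you would need a genuinely separate argument there, which \cite{CDDM} supplies.
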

 Hence the algebra $B_{r,s}(\delta)$ is semisimple  except for finitely many values $\delta \in \C$ for a fixed pair $(r,s)$.
Note that an analogue of the above for the quantized walled Brauer algebra is proved in \cite[Theorem 6.10]{RS}.

\subsection{Cell modules}
A \emph{partition} is a weakly decreasing sequence of nonnegative integers with finitely many nonzero entries.
For a partition $\mu=(\mu_1,\mu_2,\cdots)$, set $|\mu|:=\sum_{i \ge 1} \mu_i$ and set $\ell(\mu):=|\set{i \ge 1}{ \mu_i \neq 0}|$.
Sometimes, we identify a partition $\mu=(\mu_1,\mu_2,\cdots)$ with the Young diagram
whose boxes are arranged in left-justified rows with the lengths $\mu_1,\mu_2,\cdots$, and denote it by
$[\mu]$.

Let us denote $\Lambda$ the set of pair of partitions.
An element in $\Lambda$ is called a \emph{bipartition}.
Set
\eqn
&&\Lambda^t_{r,s}:=\set{(\la^L,\la^R) \in \Lambda}{|\la^L|=r-t, \ |\la^R|=s-t}, \quad \Lambda_{r,s}:=\bigsqcup_{t=0}^{\min(r,s)} \Lambda_{r,s}^t, \\
&&\dot{\Lambda}_{r,s}
=\begin{cases}
\Lambda_{r,s} & \text{if} \ \delta \neq 0 \ \text{or} \ r \neq s \ \text{or} \ r=s=0, \\
\Lambda_{r,s}-\{(\emptyset, \emptyset)\} & \text{if} \ \delta=0 \ \text{and} \ r=s \neq 0.
\end{cases}
\eneqn

Define $J^k_{r,s}$ to be the subspace of $B_{r,s}(\delta)$ spanned by the $(r,s)$-walled Brauer diagrams with
at least $k$ horizontal strands at the top and at the bottom. Then $J^k_{r,s}$ is a two-sided ideal of $B_{r,s}(\delta)$ and $J^{k'}_{r,s} \subset J^{k}_{r,s}$ for $k' \ge k$.

Let $\lambda=(\la^L,\la^R)$ be an element in $\La_{r,s}^t$ for some $0   \le t \le \min(r,s)$.
Consider the subspace $I^t_{r,s}$ of the quotient space $J^t_{r,s}/J^{t+1}_{r,s}$ spanned by $Y_{r,s}^t$, where $Y_{r,s}^t$ denote  the set of images of
the diagrams
with exactly $t$ horizontal strands at the top connecting the $(r+1-k)$-th vertex to the $(r+k)$-th vertex for each $k=1,2\ldots,t$.
 We have $\dim I_{r,s}^t=  {r \choose  t } {s \choose t} t ! (r-t)! (s-t)!$.
Then the space $I^t_{r,s}$ has a $(B_{r,s}(\delta),\C[\sym_{r-t}] \otimes \C[\sym_{s-t}] )$-bimodule structure given by the left multiplication and the right multiplication, respectively.
Now we define
\eqn
C_{r,s}(\la):=I_{r,s}^t \otimes_{\C[\sym_{r-t}] \otimes \C[\sym_{s-t}]} S(\la^L) \boxtimes S(\la^R),
\eneqn
where $S(\la^L)$  denotes the simple $\sym_{r-t}$-module parametrized the partition $\la^L$ of $r-t$,
and  $S(\la^R)$   denotes the simple $\sym_{s-t}$-module parametrized the partition $\la^R$ of $s-t$  (see, for example, \cite{JK}).
Note that the bimodule $I_{r,s}^t$ is free over $\C[\sym_{r-t}] \otimes \C[\sym_{s-t}]$ with basis
$X^t_{r,s}$ consisting of elements in $Y_{r,s}^t$ in which no two vertical strands cross.
 The cardinality of  $X_{r,s}^t$ is ${r \choose t} {s \choose t} t !$.
Thus as vector spaces,
\eqn
C_{r,s}(\lambda) = \soplus_{\tau \in X_{r,s}^t} \tau \otimes (S(\la^L) \boxtimes S(\la^R)).
\eneqn
%This decomposition will helps us calculate the actions of some elements $B_{r,s}(\delta)$ on $C_{r,s}(\lambda)$.

For $\la \in \dot{\Lambda}_{r,s}$, the cell module $C_{r,s}(\la)$ has an irreducible head $D_{r,s}(\la)$ and the family
$\set{D_{r,s}(\la)}{\la \in \dot{\Lambda}_{r,s}}$
is the complete set of mutually  non-isomorphic  simple modules over $B_{r,s}(\delta)$ (see \cite[Theorem 2.7]{CDDM}).
The elements in ${\Lambda}_{r,s}$ are called the \emph{weights}.

The cell modules are indecomposable. Moreover, we have
\begin{lemma} {\rm(}\cite[Lemma 2.2]{BS}\rm{)} \label{lem:one dimensional}
For any $\la \in \Lambda_{r,s}$ we have
\eqn
\End_{B_{r,s}(\delta)}(C_{r,s}(\la)) \simeq \C.
\eneqn
\end{lemma}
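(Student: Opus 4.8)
The plan is to show that $\End_{B_{r,s}(\delta)}(C_{r,s}(\la)) \simeq \C$ by exploiting the cellular algebra structure that is implicit in the construction of $C_{r,s}(\la)$. Recall that $C_{r,s}(\la)$ is built from the bimodule $I^t_{r,s} \subseteq J^t_{r,s}/J^{t+1}_{r,s}$, which carries a $(B_{r,s}(\delta), \C[\sym_{r-t}]\otimes\C[\sym_{s-t}])$-bimodule structure, tensored over $\C[\sym_{r-t}]\otimes\C[\sym_{s-t}]$ with the simple module $S(\la^L)\boxtimes S(\la^R)$. The key structural fact is that $B_{r,s}(\delta)$ is a cellular algebra in the sense of Graham--Lehrer, with cell datum indexed by $\Lambda_{r,s}$ and the $C_{r,s}(\la)$ being precisely the cell (standard) modules; this is established in \cite{CDDM} (and the required anti-involution is the one flipping a walled Brauer diagram upside down). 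First I would invoke the general theory of cellular algebras: each cell module $C_{r,s}(\la)$ carries a canonical symmetric bilinear form $\phi_\la$, and there is a natural algebra homomorphism from $\End_{B_{r,s}(\delta)}(C_{r,s}(\la))$ whose structure is controlled by this form.

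The concrete argument runs as follows. By the description as vector spaces, $C_{r,s}(\la) = \soplus_{\tau \in X^t_{r,s}} \tau \otimes (S(\la^L)\boxtimes S(\la^R))$, and there is a distinguished "top" diagram $\tau_0 \in X^t_{r,s}$ (the one with $t$ nested horizontal strands straddling the wall and all vertical strands straight, no crossings). The span of $\tau_0 \otimes (S(\la^L)\boxtimes S(\la^R))$ is a $\C[\sym_{r-t}]\otimes\C[\sym_{s-t}]$-submodule isomorphic to $S(\la^L)\boxtimes S(\la^R)$, and any $B_{r,s}(\delta)$-endomorphism $f$ of $C_{r,s}(\la)$, once restricted to this top component and composed with the projection back onto it, commutes with the $\sym_{r-t}\times\sym_{s-t}$-action. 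Since $S(\la^L)\boxtimes S(\la^R)$ is a simple module over $\C[\sym_{r-t}]\otimes\C[\sym_{s-t}]$, Schur's lemma over $\C$ forces that composite to be multiplication by a scalar $c_f \in \C$. It then remains to check that $f$ is determined by $c_f$, i.e. that $f - c_f\cdot\id$ vanishes: the submodule generated by the top component under $B_{r,s}(\delta)$ is all of $C_{r,s}(\la)$ (because every $\tau \in X^t_{r,s}$ is obtained from $\tau_0$ by acting with a permutation in $\sym_r\times\sym_s \subseteq B_{r,s}(\delta)$), so an endomorphism killing the top component is zero. Hence the map $f \mapsto c_f$ is an injective algebra homomorphism $\End_{B_{r,s}(\delta)}(C_{r,s}(\la)) \hookrightarrow \C$, which is also unital, so it is an isomorphism.

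Alternatively, and perhaps more cleanly, one can deduce the lemma from the adjunction $\Hom_{B_{r,s}(\delta)}(C_{r,s}(\la), -)$ via the cellular bilinear form: for a cellular algebra, $\End$ of a cell module $C(\la)$ is a local ring with residue field $\C$ whenever the ground field is algebraically closed and — in the generic/relevant situations — one-dimensional precisely when the cell module has simple head, which is guaranteed here for $\la \in \dot\Lambda_{r,s}$ and follows in the remaining case $\la = (\emptyset,\emptyset)$, $\delta = 0$, $r = s$ by a direct check (there the cell module is one-dimensional). I would present the first, hands-on argument as the main proof since it needs only Schur's lemma and the transitivity of the $B_{r,s}(\delta)$-action on the set of "leading terms".

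The main obstacle is the transitivity/generation step: one must verify carefully that the $B_{r,s}(\delta)$-submodule of $C_{r,s}(\la)$ generated by $\tau_0 \otimes (S(\la^L)\boxtimes S(\la^R))$ is the whole module, and — more subtly — that the "projection to the top component" used to extract the scalar $c_f$ is a well-defined $\C[\sym_{r-t}]\otimes\C[\sym_{s-t}]$-equivariant map, i.e. that acting by diagrams with more horizontal strands lands in $J^{t+1}_{r,s}$ and so contributes nothing modulo the filtration. This is exactly where the quotient $J^t_{r,s}/J^{t+1}_{r,s}$ in the definition of $I^t_{r,s}$ does its work, and it is the one place where the diagram combinatorics of the walled Brauer algebra must be used rather than purely formal module theory. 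Once that is in hand, everything else is immediate from Schur's lemma over the algebraically closed field $\C$.
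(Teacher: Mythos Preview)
The paper itself does not give a proof of this lemma: it is quoted verbatim from \cite[Lemma~2.2]{BS} and used as a black box. So there is no ``paper's proof'' to compare with; what matters is whether your argument stands on its own.

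Your main argument has a real gap. You extract a scalar $c_f$ from the composite
\[
\tau_0\otimes S \hookrightarrow C_{r,s}(\la) \xrightarrow{\ f\ } C_{r,s}(\la) \twoheadrightarrow \tau_0\otimes S,
\]
and then assert that $f-c_f\cdot\id$ kills the top component $\tau_0\otimes S$. But knowing only that the \emph{projection} of $(f-c_f)(\tau_0\otimes v)$ onto $\tau_0\otimes S$ vanishes does not tell you that $(f-c_f)(\tau_0\otimes v)$ itself vanishes; it could sit in $\bigoplus_{\tau\neq\tau_0}\tau\otimes S$. The generation statement (``the top component generates $C_{r,s}(\la)$'') is correct and would finish the proof \emph{if} you first show that $f$ genuinely preserves $\tau_0\otimes S$. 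One way to do this is to observe that left multiplication by $\tau_0$ sends every $\tau\otimes v$ either into $\tau_0\otimes S$ or into $J^{t+1}$ (hence to zero in the quotient), so $\tau_0\cdot C_{r,s}(\la)\subseteq \tau_0\otimes S$; when $\delta\neq 0$ this is an equality because $\tau_0(\tau_0\otimes v)=\delta^t\,\tau_0\otimes v$, and then $f(\tau_0\otimes S)=f(\tau_0\cdot C)=\tau_0\cdot f(C)\subseteq\tau_0\otimes S$, which closes the gap. The case $\delta=0$, $t\geq 1$ needs a separate argument (a different algebra element whose image is $\tau_0\otimes S$, or a direct filtration argument).

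Two further inaccuracies in your alternative paragraph: having a simple head does \emph{not} by itself force a one-dimensional endomorphism ring (think of $\C[x]/(x^2)$ over itself), so that heuristic is not a proof; and your ``direct check'' for $\la=(\emptyset,\emptyset)$, $\delta=0$, $r=s$ is wrong --- that cell module has dimension $r!$, not $1$. For instance when $r=s=2$ it is two-dimensional, and one must actually compute that every $B_{2,2}(0)$-linear endomorphism is scalar (it is, using e.g.\ the actions of $(1,2)$ and $e_{1,3}$).
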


\vskip1em
%%%%%%%%%%%%%%%%%%%%%%%%%%%%%%%%%%%%%%%%%%%%%%%%%%%%%%%%%%%%%%%%%%%%%%%%%%%%%%%%%%%%%%%%%%%%%%%%%%%%%%%%%%5
\section{Jucys-Murphy elements and supersymmetric polynomials}
\subsection{Jucys-Murphy elements}
\begin{definition} \label{def:JM element}
 {\rm For each $1 \le k \le r+s$, we define
\eqn
L_k : = \begin{cases}
\displaystyle \sum_{ j=1}^{k-1} (j,k) & \text{if } \ 1 \le k \le r, \\
\displaystyle -\sum_{j=1}^{r} e_{j,k} +\sum_{j= r+1}^{k-1}(j, k) +\delta  & \text{if } \ r+1 \le k \le r+s.
\end{cases}
\eneqn
We call these $L_k$'s the \emph{Jucys-Murphy elements of $B_{r,s}(\delta)$}.  In particular, $L_1=0$.}
\end{definition}

\newcommand{\idem}{\boldsymbol{1}}
\newcommand{\bolda}{{\boldsymbol{\mathrm{a}}}}
\newcommand{\boldb}{{\boldsymbol{\mathrm{b}}}}
\begin{remark} \label{rem:different}
The above Jucys-Murphy elements are similar to those in \cite{BS} and those in \cite{SartoriStroppel}, but different.
Precisely speaking, the parameter $\delta$ does not appear in the definitions of Jucys-Murphy elements in \cite{BS, SartoriStroppel}.
Actually, if we modify the definition of $\xi_1 \idem_{\bolda}$, which was defined as  $0 \idem_{\bolda}$  in \cite[(2.7)]{SartoriStroppel}, into
\eqn
\xi_1 \idem_{\bolda} =\begin{cases}
 0\idem_{\bolda}  & \text{if }\ a_1=\uparrow, \\
 \delta \idem_{\bolda} & \text{if }\ a_1=\downarrow,
\end{cases}
\eneqn
then  the elements $\xi_k\idem_{\uparrow^r \downarrow^s}$ in \cite[(2.7)]{SartoriStroppel}  yields  our  element $L_k$ of $B_{r,s}(\delta)$. This modification is due to Antonio Sartori.

Note  that there are so called \emph{Jucys-Murphy-like elements} of walled Brauer algebras, introduced in \cite{RuiSu}, which are still different from ours.
\end{remark}

The following relations will be used frequently.
\begin{lemma} \label{lem:relations between horizontal lines and vertical lines}
For all mutually distinct and admissible $i, j, i',j'$,  we have
\begin{enumerate}[\rm(1)]
\item $(i',j')e_{i,j} =e_{i,j} (i',j')$,
\item $(i,i')e_{i,j} =e_{i',j} (i,i')$ and  \ $(j,j') e_{i,j} = e_{i,j'} (j,j')$,
\item $e_{i,j} e_{i',j'} = e_{i',j'} e_{i,j} $,
\item $e_{i,j} e_{i,j'} = e_{i,j} (j,j')$ and \ $e_{i,j} e_{i'j} = e_{i,j} (i,i')$,
\item $e_{i,j} e_{i',j'}(i,i')=e_{i,j} e_{i',j'}(j,j')$,
%\item $e_{i,j} e_{i',j'} (i,i') (j,j') = e_{i,j} e_{i',j'}$.
%\item $(i',i)(j,j')e_{i,j}(j,j')(i',i)e_{i,j} = e_{i,j}(j,j')(i',i)e_{i,j}$
\end{enumerate}
\end{lemma}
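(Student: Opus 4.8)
The plan is to verify each of the five identities by interpreting both sides as multiplication of walled Brauer diagrams, using the explicit diagrammatic descriptions of $(i,j)$ and $e_{j,k}$ given in the text. Recall that $(i,j)$ is the diagram whose only non-identity part is a crossing of the propagating lines at positions $i$ and $j$ (inside the same side of the wall), and that $e_{i,j}$ (for $i \le r < j$) is the diagram with a northern arc joining the top vertices $i$ and $j$, a southern arc joining the bottom vertices $i$ and $j$, and all other strands vertical. Composition amounts to stacking diagrams and tracing strands; since none of the diagrams occurring here produces a closed loop, no power of $\delta$ intervenes and every product is again a single diagram. So each identity reduces to checking that the two stacked diagrams have the same connectivity, which is a finite, purely combinatorial check on where the relevant $2$, $3$ or $4$ marked points get routed.

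First I would dispose of (1): since $i,j,i',j'$ are distinct, the transposition $(i',j')$ acts on strands disjoint from the arcs of $e_{i,j}$, so the two diagrams built from disjoint ``gadgets'' commute — this is immediate from the stacking picture. Next, for (2), stacking $(i,i')$ on top of $e_{i,j}$ sends the northern arc endpoint from $i$ to $i'$ while leaving the southern arc at $i$; reading from the other side, $e_{i',j}$ has its northern arc at $i'$ and southern arc at $i'$, and post-composing with $(i,i')$ moves the southern endpoint back from $i'$ to $i$ — so both products are the diagram with northern arc $\{i',j\}$ and southern arc $\{i,j\}$, hence equal. The second half of (2) is the mirror statement with the roles of top and bottom (equivalently of the two indices on the $j$-side) interchanged. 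For (3), the arcs of $e_{i,j}$ and $e_{i',j'}$ occupy disjoint sets of vertices, so again the gadgets commute. For (4): stacking $e_{i,j}$ over $e_{i,j'}$, the southern arc $\{i,j\}$ of the top copy and the northern arc $\{i,j'\}$ of the bottom copy get joined through the shared vertex $i$, producing a strand from top-$j$ to bottom-$j'$ together with the surviving northern arc $\{i,j\}$ and southern arc $\{i,j'\}$; one then checks this is exactly the diagram $e_{i,j}(j,j')$, i.e. $e_{i,j}$ with its southern arc endpoint slid from $j$ to $j'$. The second half of (4) is the analogous computation joining through the shared $j$-side vertex.

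The only identity requiring genuine care is (5), $e_{i,j}e_{i',j'}(i,i') = e_{i,j}e_{i',j'}(j,j')$, so I expect this to be the main (though still elementary) obstacle. The strategy there is: first compute $e_{i,j}e_{i',j'}$ using (3) — it is the diagram $D$ with two disjoint northern arcs $\{i,j\}$, $\{i',j'\}$ and two disjoint southern arcs $\{i,j\}$, $\{i',j'\}$. Post-composing $D$ with $(i,i')$ swaps the bottom vertices $i$ and $i'$; since in $D$ those bottom vertices are the ``inner'' endpoints of the two southern arcs, the effect is precisely to reconnect the southern arcs into $\{i',j\}$ and $\{i,j'\}$. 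Post-composing $D$ with $(j,j')$ swaps the bottom vertices $j$ and $j'$, the ``outer'' endpoints, which reconnects the same two southern arcs into $\{i,j'\}$ and $\{i',j\}$ — the identical pair. Since the northern arcs and all vertical strands are untouched in both cases, the two resulting diagrams coincide, proving (5). Throughout I would phrase these verifications in terms of the stacking pictures rather than long words, possibly including one or two small diagrams, and I would remark that the ``mirror'' halves of (2) and (4) follow by the evident top-bottom symmetry so they need not be checked separately.
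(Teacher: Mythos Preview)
Your approach is correct and essentially identical to the paper's: both verify the identities by direct computation with $(r,s)$-walled Brauer diagrams, and in fact the paper only writes out the first equality of (4) explicitly and says the remaining cases are similar. One caveat: in this paper the product $d_1 d_2$ is formed by placing $d_1$ \emph{under} $d_2$, so several of your intermediate descriptions (which endpoint is northern versus southern in (2) and (4), and that $(i,i')$ acts on the bottom vertices in (5)) are flipped relative to the paper's convention, though the equalities themselves are of course unaffected.
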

\Proof
 They can be checked by direct calculations on $(r,s)$-walled Brauer diagrams.
For example, we can check the first equality in (4) as follows:
\vskip1em
$${\beginpicture
\setcoordinatesystem units <0.78cm,0.39cm>
\setplotarea x from 0 to 9, y from -1 to 4
\put{$e_{i,j}e_{i,j'}= $} at -0.5 3
\put{$\bullet$} at  1 0  \put{$\bullet$} at  1 3
\put{$\bullet$} at  2 0  \put{$\bullet$} at  2 3
\put{$\bullet$} at  3 0  \put{$\bullet$} at  3 3

\put{$\bullet$} at  1 3.5  \put{$\bullet$} at  1 6.5
\put{$\bullet$} at  2 3.5  \put{$\bullet$} at  2 6.5
\put{$\bullet$} at  3 3.5  \put{$\bullet$} at  3 6.5

\put{{\scriptsize$i$}} at 1 -1
\put{{\scriptsize$j$}} at 2 -1
\put{{\scriptsize$j'$}} at 3 -1
%\put{,} at 8.5 1

\plot 3 0 3 3 /
\plot 2 3.5 2 6.5 /

\setdashes  <.4mm,1mm>
\plot 1.5 -1   1.5 7.5 /
\setsolid

%%%%%%%%%%%%%%%%%%%%%%%%%%%%%%%%%%%%%%%%%%%%%%
\put{$=$} at 4 3
\put{$\bullet$} at  5 1.5  \put{$\bullet$} at  5 4.5
\put{$\bullet$} at  6 1.5  \put{$\bullet$} at  6 4.5
\put{$\bullet$} at  7 1.5  \put{$\bullet$} at  7 4.5

\put{{\scriptsize$i$}} at 5 0.5
\put{{\scriptsize$j$}} at 6 0.5
\put{{\scriptsize$j'$}} at 7 0.5

\plot 7 1.5  6 4.5 /

\setdashes  <.4mm,1mm>
\plot 5.5 0.5   5.5 5.5 /
\setsolid
%%%%%%%%%%%%%%%%%%%%%%%%%%%%%%%%%%%%%%%%%%%%%%%%%%%%
\put{$=$} at 8 3
\put{$\bullet$} at  9 0  \put{$\bullet$} at  9 3
\put{$\bullet$} at  10 0  \put{$\bullet$} at  10 3
\put{$\bullet$} at  11 0  \put{$\bullet$} at  11 3

\put{$\bullet$} at  9 3.5  \put{$\bullet$} at  9 6.5
\put{$\bullet$} at  10 3.5  \put{$\bullet$} at  10 6.5
\put{$\bullet$} at  11 3.5  \put{$\bullet$} at  11 6.5

\put{{\scriptsize$i$}} at 9 -1
\put{{\scriptsize$j$}} at 10 -1
\put{{\scriptsize$j'$}} at 11 -1
%\put{,} at 8.5 1

\plot 11 0 11 3 /
\plot 9 3.5 9 6.5 /
\plot 10 3.5 11 6.5 /
\plot 11 3.5 10 6.5 /

\setdashes  <.4mm,1mm>
\plot 9.5 -1   9.5 7.5 /
\setsolid

\put{$= e_{i,j}(j,j')$ if $j <j'$,} at 14 3

%%%%%%%%%%%%%%%%%%%%%%%%%%%%%%%%%%%%%%%%%%%%%%%%%
\setquadratic
\plot 1 3 1.5 2 2 3 /
\plot 1 0 1.5 1 2 0  /
\plot 1 3.5 2 4.5 3 3.5 /
\plot 1 6.5 2 5.5 3 6.5  /

\setquadratic
\plot 5 1.5 5.5 2.5 6 1.5 /
\plot 5 4.5 6 3.5 7 4.5  /

\setquadratic
\plot 9 0 9.5 1 10 0 /
\plot 9 3 9.5 2 10 3 /
\endpicture}$$

%%%%%second%%%%%%%%%%%%%%%%%%%%%%%%%%%%%%%%%%%%%%%%%%%%%%%%%%%%%%%%%%%%%%%%%%%%%%%%%%%%%%%%%%%%%%%%%%%%%%
%%%%%%%%%%%%%%%%%%%%%%%%%%%%%%%%%%%%%%%%%%%%%%%%%%%%%%%%%%%%%%%%%%%%%%%%%%%%%%%%%%%%%%%%%%%
\vskip1.5em
$${\beginpicture
\setcoordinatesystem units <0.78cm,0.39cm>
\setplotarea x from 0 to 9, y from -1 to 4
\put{$e_{i,j}e_{i,j'}= $} at -0.5 3
\put{$\bullet$} at  1 0  \put{$\bullet$} at  1 3
\put{$\bullet$} at  2 0  \put{$\bullet$} at  2 3
\put{$\bullet$} at  3 0  \put{$\bullet$} at  3 3

\put{$\bullet$} at  1 3.5  \put{$\bullet$} at  1 6.5
\put{$\bullet$} at  2 3.5  \put{$\bullet$} at  2 6.5
\put{$\bullet$} at  3 3.5  \put{$\bullet$} at  3 6.5

\put{{\scriptsize$i$}} at 1 -1
\put{{\scriptsize$j'$}} at 2 -1
\put{{\scriptsize$j$}} at 3 -1
%\put{,} at 8.5 1

\plot 2 0 2 3 /
\plot 3 3.5 3 6.5 /

\setdashes  <.4mm,1mm>
\plot 1.5 -1   1.5 7.5 /
\setsolid

%%%%%%%%%%%%%%%%%%%%%%%%%%%%%%%%%%%%%%%%%%%%%%
\put{$=$} at 4 3
\put{$\bullet$} at  5 1.5  \put{$\bullet$} at  5 4.5
\put{$\bullet$} at  6 1.5  \put{$\bullet$} at  6 4.5
\put{$\bullet$} at  7 1.5  \put{$\bullet$} at  7 4.5

\put{{\scriptsize$i$}} at 5 0.5
\put{{\scriptsize$j'$}} at 6 0.5
\put{{\scriptsize$j$}} at 7 0.5

\plot 6 1.5  7 4.5 /

\setdashes  <.4mm,1mm>
\plot 5.5 0.5   5.5 5.5 /
\setsolid
%%%%%%%%%%%%%%%%%%%%%%%%%%%%%%%%%%%%%%%%%%%%%%%%%%%%
\put{$=$} at 8 3
\put{$\bullet$} at  9 0  \put{$\bullet$} at  9 3
\put{$\bullet$} at  10 0  \put{$\bullet$} at  10 3
\put{$\bullet$} at  11 0  \put{$\bullet$} at  11 3

\put{$\bullet$} at  9 3.5  \put{$\bullet$} at  9 6.5
\put{$\bullet$} at  10 3.5  \put{$\bullet$} at  10 6.5
\put{$\bullet$} at  11 3.5  \put{$\bullet$} at  11 6.5

\put{{\scriptsize$i$}} at 9 -1
\put{{\scriptsize$j'$}} at 10 -1
\put{{\scriptsize$j$}} at 11 -1
%\put{,} at 8.5 1

\plot 10 0 10 3 /
\plot 9 3.5 9 6.5 /
\plot 10 3.5 11 6.5 /
\plot 11 3.5 10 6.5 /

\setdashes  <.4mm,1mm>
\plot 9.5 -1   9.5 7.5 /
\setsolid

\put{$= e_{i,j}(j,j')$ if $j' < j$.} at 14 3

%%%%%%%%%%%%%%%%%%%%%%%%%%%%%%%%%%%%%%%%%%%%%%%%%
\setquadratic
\plot 1 0 2 1 3 0  /
\plot 1 3 2 2 3 3 /
\plot 1 3.5 1.5 4.5 2 3.5 /
\plot 1 6.5 1.5 5.5 2 6.5  /

\setquadratic
\plot 5 1.5 6 2.5 7 1.5 /
\plot 5 4.5 5.5 3.5 6 4.5  /

\setquadratic
\plot 9 0 10 1 11 0 /
\plot 9 3 10 2 11 3 /
\endpicture}$$
Since the other relations can be checked similarly, we omit  their  proofs.
\QED

The following proposition will play an important role in  the rest of this paper.
Note that  similar  relations in Brauer algebras appeared in \cite[Proposition 2.3]{Nazarov}.
\begin{prop}  \label{prop:commuting relations with generators}
We have the following relations:
\begin{enumerate}[\rm(1)]
\item  $(i, i+1) L_{i+1} -L_i(i,i+1)=1$ for $i \neq r$,
 \item $L_{i+1} (i,i+1)-(i,i+1) L_i=1$ for $i \neq r$,
  \item $e_{r, r+1} (L_r +L_{r+1}) =(L_r +L_{r+1}) e_{r, r+1} =0,$
  \item $(i, i+1) L_k = L_k (i,i+1)$ for $k \neq i,i+1$,
  \item $e_{r, r+1} L_k =L_k e_{r,r+1}$ for $k \neq r, r+1$.
  \end{enumerate}
\end{prop}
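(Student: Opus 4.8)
The plan is to verify the five identities one at a time, using only Definition~\ref{def:JM element}, the commutation rules of Lemma~\ref{lem:relations between horizontal lines and vertical lines}, the Coxeter relations among the $s_i$, the classical Jucys--Murphy relation $s_iL'_{i+1}=L'_is_i+1$ in a symmetric group algebra (where $L'_k:=\sum_{j<k}(j,k)$), and the diagrammatic relation $e_{r,r+1}^2=\delta\,e_{r,r+1}$, which is immediate from the multiplication rule of $B_{r,s}(\delta)$. It is also convenient to use the anti-automorphism of $B_{r,s}(\delta)$ given by reflecting walled Brauer diagrams in a horizontal axis; since it fixes every $s_i$ and every $e_{j,k}$, it fixes every $L_k$.

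For (1), I would split into the cases $1\le i\le r-1$ and $r+1\le i\le r+s-1$. In the first case $s_i,L_i,L_{i+1}$ all lie in the copy of $\C[\sym_r]$ inside $B_{r,s}(\delta)$, and the identity is the classical one recalled above. In the second case, write $L_{i+1}=-\sum_{j=1}^re_{j,i+1}+\sum_{j=r+1}^{i}(j,i+1)+\delta$ and push $s_i=(i,i+1)$ to the right through each summand: Lemma~\ref{lem:relations between horizontal lines and vertical lines}(2) gives $s_ie_{j,i+1}=e_{j,i}s_i$ for $1\le j\le r$, the Coxeter relations give $s_i(j,i+1)=(j,i)s_i$ for $r+1\le j\le i-1$, and $s_i(i,i+1)=1$ supplies the ``$+1$'', so that $s_iL_{i+1}=L_is_i+1$. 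Identity (2) then follows from (1) by conjugating by $s_i$ (using $s_i^2=1$), or equivalently by applying the reflection anti-automorphism.

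For (4), fix $k\neq i,i+1$ and argue by the relative positions of $i$ and $k$. Whenever the support of $s_i$ is disjoint from the indices occurring in $L_k$, commutativity is immediate from Lemma~\ref{lem:relations between horizontal lines and vertical lines}(1); in the remaining cases $s_i$ either lies in a symmetric group $\sym_{k-1}$ that normalizes the transposition sum occurring in $L_k$, or it transposes the two terms $e_{i,k},e_{i+1,k}$ of $L_k$ with one another (Lemma~\ref{lem:relations between horizontal lines and vertical lines}(1)--(2)); in every case, summing over the index $j$ yields $s_iL_k=L_ks_i$. For (5), the case $1\le k\le r-1$ is immediate from Lemma~\ref{lem:relations between horizontal lines and vertical lines}(1) since the support of $e_{r,r+1}$ is disjoint from the indices of $L_k$. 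The substantive case is $r+2\le k\le r+s$: here the terms $e_{j,k}$ of $L_k$ with $1\le j\le r-1$ commute with $e_{r,r+1}$ by Lemma~\ref{lem:relations between horizontal lines and vertical lines}(3), and the terms $(j,k)$ with $r+2\le j\le k-1$ commute by Lemma~\ref{lem:relations between horizontal lines and vertical lines}(1), so the claim reduces to $e_{r,r+1}\bigl(-e_{r,k}+(r+1,k)\bigr)=\bigl(-e_{r,k}+(r+1,k)\bigr)e_{r,r+1}$. Now Lemma~\ref{lem:relations between horizontal lines and vertical lines}(4) gives $e_{r,r+1}e_{r,k}=e_{r,r+1}(r+1,k)$ and $e_{r,k}e_{r,r+1}=e_{r,k}(r+1,k)$, and Lemma~\ref{lem:relations between horizontal lines and vertical lines}(2) gives $e_{r,r+1}(r+1,k)=(r+1,k)e_{r,k}$ and $(r+1,k)e_{r,r+1}=e_{r,k}(r+1,k)$; substituting these, both sides equal $0$, the $e_{r,k}$-contribution cancelling the $(r+1,k)$-contribution.

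Finally, for (3) write $L_r+L_{r+1}=\sum_{j=1}^{r-1}\bigl((j,r)-e_{j,r+1}\bigr)-e_{r,r+1}+\delta$ (the transposition sum in $L_{r+1}$ being empty) and compute $e_{r,r+1}(L_r+L_{r+1})$ termwise: Lemma~\ref{lem:relations between horizontal lines and vertical lines}(2) gives $e_{r,r+1}(j,r)=(j,r)e_{j,r+1}$; Lemma~\ref{lem:relations between horizontal lines and vertical lines}(4) gives $e_{r,r+1}e_{j,r+1}=e_{r,r+1}(j,r)$, which by the previous relation equals $(j,r)e_{j,r+1}$; and $e_{r,r+1}^2=\delta e_{r,r+1}$ cancels the $\delta$ term, leaving $\sum_{j=1}^{r-1}\bigl((j,r)e_{j,r+1}-(j,r)e_{j,r+1}\bigr)=0$. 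The identity $(L_r+L_{r+1})e_{r,r+1}=0$ follows by the mirror computation, or by applying the reflection anti-automorphism to $e_{r,r+1}(L_r+L_{r+1})=0$. I expect the only genuinely delicate points to be the two cancellations just described — in (3) and in the case $r+2\le k\le r+s$ of (5): there the individual summands of $L_k$ fail to commute with $e_{r,r+1}$, and one has to see that the offending contributions annihilate one another.
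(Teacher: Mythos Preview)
Your proposal is correct and follows essentially the same route as the paper: a case-by-case verification using Lemma~\ref{lem:relations between horizontal lines and vertical lines}, the Coxeter relations, and the classical Jucys--Murphy identity in $\C[\sym_r]$. The only cosmetic differences are that you appeal to the reflection anti-automorphism to obtain the mirror identities in (2), (3), and (5) (the paper writes these out by hand), and in (5) you observe that $e_{r,r+1}\bigl(-e_{r,k}+(r+1,k)\bigr)$ and $\bigl(-e_{r,k}+(r+1,k)\bigr)e_{r,r+1}$ each vanish, whereas the paper rewrites one side directly into the other.
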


\Proof
(1) When $1 \le i \le r-1$, the relation (1) is well-known. Assume that $r+1 \le i \le r+s-1.$
Then we have
\eqn
(i,i+1)L_{i+1} && =(i,i+1) \Big( - \sum_{j=1}^{r} e_{j,i+1} +\sum_{j=r+1}^{i} (j,i+1) +\delta \Big) \\
               && = - \sum_{j=1}^{r} e_{j,i} (i,i+1) +\sum_{j=r+1}^{i-1} (i,i+1)(j,i+1) +1 +\delta (i,i+1) \\
               && = \Big(- \sum_{j=1}^{r} e_{j,i} +\sum_{j=r+1}^{i-1} (j,i) +\delta \Big)(i,i+1) +1 =L_i (i, i+1) +1.
\eneqn

(2) It follows from (1) that
\eqn
L_{i+1}(i,i+1) = (i,i+1) (L_i (i, i+1)+1)(i,i+1)=(i,i+1)L_i+1.
\eneqn

(3) By direct calculations and Lemma \ref{lem:relations between horizontal lines and vertical lines} (4), we  get
\eqn
e_{r,r+1}(L_r +L_{r+1}) && = e_{r,r+1} \Big( \sum_{j=1}^{r-1} (j,r) -\sum_{j=1}^r e_{j,r+1} + \delta \Big) \\
                        && = \sum_{j=1}^{r-1} e_{r,r+1}(j,r) - \sum_{j=1}^{r-1} e_{r,r+1}(j,r) - \delta e_{r,r+1} + \delta e_{r,r+1} =0.
\eneqn
Similarly, by Lemma \ref{lem:relations between horizontal lines and vertical lines} (2) and (4), we have
\eqn
(L_r +L_{r+1})e_{r,r+1} && = \Big( \sum_{j=1}^{r-1} (j,r) -\sum_{j=1}^r e_{j,r+1} + \delta \Big) e_{r,r+1} \\
                        && = \sum_{j=1}^{r-1} e_{j,r+1}(j,r) - \sum_{j=1}^{r-1} e_{j,r+1}(j,r) - \delta e_{r,r+1} + \delta e_{r,r+1} =0.
\eneqn

(4) For $i \ge k+1$, it is trivial that $L_k (i,i+1) =(i,i+1) L_k$.
For  $ i<k-1<k \le  r $, it is well-known that $(i,i+1)$ commutes with $L_k$.
Assume that $i< r< k$. Then  from Lemma \ref{lem:relations between horizontal lines and vertical lines} (1),(2) and direct calculations, we have
\eqn
(i,i+1) L_k && = (i,i+1) \Big( -\sum_{j=1}^r e_{j,k} + \sum_{j=r+1}^{k-1} (j,k)+ \delta \Big) \\
            && =  -\sum_{\substack{j=1\\j \neq i,i+1}}^r e_{j,k}(i,i+1)  -e_{i+1,k}(i,i+1) -e_{i,k}(i,i+1)\\
            && \quad +\sum_{j=r+1}^{k-1} (j,k)(i,i+1) +\delta (i,i+1) \\
            && = \Big(-\sum_{j=1}^r e_{j,k} +\sum_{j=r+1}^{k-1} (j,k) + \delta \Big) (i,i+1) =L_k (i,i+1).
\eneqn

Assume that $ r< i < k-1$.
Then we can check that
\eqn
(i,i+1)L_k && = (i,i+1) \Big( -\sum_{j=1}^r e_{j,k} + \sum_{j=r+1}^{k-1} (j,k)+ \delta \Big) \\
           && = -\sum_{j=1}^r e_{j,k} (i,i+1) +\sum_{\substack{j=r+1\\ j \neq i, i+1}}^{k-1} (j,k)(i,i+1) +\delta(i,i+1) \\
           && \quad + (i,i+1)(i,k) +(i,i+1)(i+1, k)  \\
           &&= -\sum_{j=1}^r e_{j,k} (i,i+1) +\sum_{\substack{j=r+1\\ j \neq i, i+1}}^{k-1} (j,k)(i,i+1) +\delta(i,i+1) \\
           && \quad + (i+1,k)(i,i+1) +(i, k)(i,i+1) \\
           && = \Big( -\sum_{j=1}^r e_{j,k} + \sum_{j=r+1}^{k-1} (j,k)+ \delta \Big) (i,i+1) =L_k (i,i+1).
\eneqn
%where $\delta_{k \ge i+2} =1$ if $k \ge i+2$ and $\delta_{k \ge i+2}=0$ if $k < i+2$.

(5) If $k < r$, it is trivial that $e_{r,r+1} L_k =L_k e_{r,r+1}$. Let $k \ge r+2$.
Then we have
\eqn
e_{r,r+1}L_k && =e_{r,r+1} \Big(-\sum_{j=1}^r e_{j,k} + \sum_{j=r+1}^{k-1} (j,k)+ \delta \Big) \\
             && = -\sum_{j=1}^{r-1} e_{r,r+1}e_{j,k} -e_{r,r+1}(r+1, k) +\sum_{j=r+1}^{k-1} e_{r,r+1}(j,k) +\delta e_{r,r+1} \\
             && = -\sum_{j=1}^{r-1} e_{j,k}e_{r,r+1}  +\sum_{j=r+2}^{k-1} e_{r,r+1}(j,k) +\delta e_{r,r+1}\\
             && =\Big(-\sum_{j=1}^r e_{j,k} + \sum_{j=r+1}^{k-1} (j,k)+ \delta \Big) e_{r,r+1} =L_k e_{r,r+1},
\eneqn
as desired. In the fourth equality, we use  $e_{r,k} e_{r,r+1} = e_{r,k}(r+1, k)=(r+1,k)e_{r,r+1}$ obtained by Lemma \ref{lem:relations between horizontal lines and vertical lines} (2) and (4).
\QED

\begin{remark} \label{rem:variants}
For each $a \in \C$,  we have variants of the Jucys-Murphy elements:
set
\eqn
L^a_k:=\begin{cases} L_k + a & \text{if } \ 1 \le k \le r, \\
L_k-a
 & \text{if } \ r+1 \le k \le r+s.
\end{cases}
\eneqn
It is easy to see that  the above proposition holds with $L^a_k$ instead of $L_k$.
All the other parts of the rest of this paper will be  also valid, after a small modification.
\end{remark}

\begin{prop}    \label{prop:Li are commuting}
The elements $L_k$'s are commuting to each other.
\end{prop}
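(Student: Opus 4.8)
The plan is to prove that the $L_k$ commute by treating the three cases $1\le i<k\le r$, $1\le i\le r<k\le r+s$, and $r+1\le i<k\le r+s$ separately, reducing everything to the commuting relations already established. The case $1\le i<k\le r$ is classical: it is the well-known fact that the Jucys-Murphy elements of the group algebra $\C[\sym_r]$ commute, and since $L_1,\ldots,L_r$ are precisely (the images of) those elements, there is nothing to do. The case $r+1\le i<k\le r+s$ is entirely analogous to the $\C[\sym_s]$ situation once the $e_{j,k}$ terms are handled: by Lemma~\ref{lem:relations between horizontal lines and vertical lines} the elements $e_{j,k}$ for $1\le j\le r$ commute among themselves and with the transpositions $(a,b)$ for $r+1\le a<b\le r+s$ in the appropriate way, so the same telescoping argument that works for symmetric-group Jucys-Murphy elements goes through verbatim.

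The substantive case is $1\le i\le r<k\le r+s$, where one must show $L_i L_k=L_k L_i$ with $L_i=\sum_{a=1}^{i-1}(a,i)$ and $L_k=-\sum_{j=1}^{r}e_{j,k}+\sum_{j=r+1}^{k-1}(j,k)+\delta$. Here I would expand the product and observe that $L_i$ involves only transpositions $(a,i)$ with $a,i\le r$, while $L_k$ involves (i) transpositions $(j,k)$ with $j,k\ge r+1$, which commute with $L_i$ trivially since the index sets are disjoint, (ii) the scalar $\delta$, which is central, and (iii) the sum $\sum_{j=1}^{r}e_{j,k}$. So the whole problem collapses to showing that $L_i$ commutes with $E_k:=\sum_{j=1}^{r}e_{j,k}$. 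For this I would use Lemma~\ref{lem:relations between horizontal lines and vertical lines}(1),(2): conjugating $e_{j,k}$ by a transposition $(a,i)$ with $a,i\le r$ permutes the first index, $(a,i)e_{j,k}(a,i)=e_{\sigma(j),k}$ where $\sigma$ swaps $a$ and $i$ and fixes everything else; hence $(a,i)\bigl(\sum_{j=1}^{r}e_{j,k}\bigr)(a,i)=\sum_{j=1}^{r}e_{j,k}$ because the sum runs over all $j\in\{1,\ldots,r\}$ and is therefore invariant under any permutation of $\{1,\ldots,r\}$. This gives $(a,i)E_k=E_k(a,i)$ for every $a$, so $L_iE_k=E_kL_i$, and combining the three pieces yields $L_iL_k=L_kL_i$.

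A convenient way to organize the whole argument is to note that $L_1,\ldots,L_r$ together with $E_{r+1},\ldots,E_{r+s}$ all lie in (or are built from) a commuting-friendly picture: each $L_i$ ($i\le r$) and each transposition $(j,k)$ ($j,k>r$) only moves indices within its own side of the wall, while $E_k=\sum_{j=1}^r e_{j,k}$ is invariant under the $\sym_r\times\sym_s$ action by the symmetry just noted, and Lemma~\ref{lem:relations between horizontal lines and vertical lines}(3) handles $E_kE_{k'}=E_{k'}E_k$. Then $L_k=\delta-E_k+\sum_{j=r+1}^{k-1}(j,k)$ is a sum of three mutually compatible pieces. The only place any genuine computation is needed is the verification, via Lemma~\ref{lem:relations between horizontal lines and vertical lines}(1),(2), that conjugation by $(a,i)$ permutes the first index of $e_{j,k}$; everything else is bookkeeping with disjoint index sets and the centrality of $\delta$. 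I expect no real obstacle: the main point to be careful about is keeping track of which transpositions live on which side of the wall and confirming that the summation range $\sum_{j=1}^r$ is exactly what makes $E_k$ invariant under $\sym_r$, so that the conjugation identity closes up rather than producing stray terms.
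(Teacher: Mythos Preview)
Your argument for the cases $1\le i<k\le r$ and $1\le i\le r<k$ is fine, but Case~3 ($r+1\le i<k\le r+s$) has a genuine gap. You assert that Lemma~\ref{lem:relations between horizontal lines and vertical lines}(3) gives $E_iE_k=E_kE_i$ where $E_m:=\sum_{j=1}^r e_{j,m}$, but that lemma only applies when all four indices are distinct. When the first indices coincide, Lemma~\ref{lem:relations between horizontal lines and vertical lines}(4) gives $e_{j,i}e_{j,k}=e_{j,i}(i,k)$ and $e_{j,k}e_{j,i}=e_{j,k}(i,k)$, so
\[
[E_i,E_k]=\sum_{j=1}^r\bigl(e_{j,i}-e_{j,k}\bigr)(i,k)=(E_i-E_k)(i,k)\neq 0.
\]
Similarly, your claim that the $e_{j,m}$ commute with all transpositions $(a,b)$ on the right of the wall fails when $\{a,b\}$ meets $\{m\}$: by Lemma~\ref{lem:relations between horizontal lines and vertical lines}(2), $(i,k)e_{j,i}=e_{j,k}(i,k)$, so $E_i$ does not commute with the summand $(i,k)$ of $M_k:=\sum_{a=r+1}^{k-1}(a,k)$. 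What actually happens is a cancellation: one computes $[E_i,M_k]=(E_i-E_k)(i,k)$ and $[M_i,E_k]=0$, so that $[L_i,L_k]=[E_i,E_k]-[E_i,M_k]-[M_i,E_k]+[M_i,M_k]=0$. Your write-up misses this cross-term cancellation entirely.

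The paper sidesteps this computation by a tower argument: it uses Proposition~\ref{prop:commuting relations with generators}(4),(5) to show that $L_{r+a}$ commutes with every generator of the subalgebra $B_{r+a-1}=\langle s_1,\ldots,s_{r-1},e_{r,r+1},s_{r+1},\ldots,s_{r+a-2}\rangle$, hence with all of $B_{r+a-1}$, and in particular with $L_1,\ldots,L_{r+a-1}\in B_{r+a-1}$. This is both shorter and avoids the delicate cancellation; if you want to keep your direct approach you must supply the cross-term computation above.
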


\Proof
It is well-known that the elements $L_1, \ldots, L_r$ are commuting to each other.
Let $B_r$  be the subalgebra of $B_{r,s}(\delta)$ generated by
$s_1, \ldots, s_{r-1}$ and
let $B_{r+a}$  be the subalgebra generated by $s_1, \ldots, s_{r-1}, e_{r,r+1}, s_{r+1}, \ldots s_{r+a-1}$
 for $1 \le a \le s$.
Then we have  $L_1,\ldots,L_{r+a-1}, L_{r+a} \in B_{r+a}$ for $0 \le a \le s$.
On the other hand, by Proposition \ref{prop:commuting relations with generators} (4) and (5), we know
that the element $L_{r+a}$ commutes with the generators of  $B_{r+a-1}$ for each $1 \le a \le s$.
%the center $Z(B_{r+a-1})$ of $B_{r+a-1}$
Therefore, $L_{r+a}$  commutes with $L_{1}, \ldots, L_{r+a-1}$, as desired.
\QED

\begin{prop}  \label{prop:belongtocenter}
For each $k \in \Z_{\ge 0}$, the element
$$L_1^k+\cdots+L_r^k+(-1)^{k+1}(L_{r+1}^k+\cdots+L_{r+s}^k)$$
  belongs to the center of $B_{r,s}(\delta)$.
\end{prop}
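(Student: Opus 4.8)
The plan is to verify that the element $p_k := L_1^k+\cdots+L_r^k+(-1)^{k+1}(L_{r+1}^k+\cdots+L_{r+s}^k)$ commutes with each of the algebra generators of $B_{r,s}(\delta)$ --- namely $s_i=(i,i+1)$ for $1\le i\le r-1$ and for $r+1\le i\le r+s-1$, together with $e_{r,r+1}$ --- which will prove that $p_k$ is central. Throughout I will use that the $L_m$ commute with one another (Proposition \ref{prop:Li are commuting}), so that polynomial expressions in them may be rearranged as in $\C[L_1,\ldots,L_{r+s}]$; the case $k=0$ is immediate since then $p_0=r-s$ is a scalar, so assume $k\ge 1$.

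For a generator $s_i$ with $i\ne r$, Proposition \ref{prop:commuting relations with generators}(4) shows that $s_i$ commutes with $L_m$, hence with $L_m^k$, for every $m\notin\{i,i+1\}$, so it remains to check that $s_i$ commutes with $L_i^k+L_{i+1}^k$. From Proposition \ref{prop:commuting relations with generators}(1),(2), together with the commutativity of $L_i$ and $L_{i+1}$, a short computation gives $s_i(L_i+L_{i+1})=(L_i+L_{i+1})s_i$ and $s_i(L_iL_{i+1})=(L_iL_{i+1})s_i$; that is, $s_i$ commutes with both elementary symmetric functions of $L_i,L_{i+1}$, hence --- by Newton's identities --- with every symmetric polynomial in $L_i,L_{i+1}$, and in particular with the power sum $L_i^k+L_{i+1}^k$. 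Since $\{i,i+1\}$ lies wholly inside $\{1,\ldots,r\}$ or wholly inside $\{r+1,\ldots,r+s\}$, this shows that $s_i$ commutes with $\sum_{m=1}^{r}L_m^k$ and with $\sum_{m=r+1}^{r+s}L_m^k$ separately, hence with $p_k$.

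For the generator $e:=e_{r,r+1}$, Proposition \ref{prop:commuting relations with generators}(5) shows that $e$ commutes with $L_m^k$ for $m\notin\{r,r+1\}$, so it suffices to treat $L_r^k+(-1)^{k+1}L_{r+1}^k=L_r^k-(-1)^kL_{r+1}^k$. Applying the factorization $a^k-(-1)^kb^k=(a+b)\bigl(a^{k-1}-a^{k-2}b+\cdots+(-1)^{k-1}b^{k-1}\bigr)$ with $a=L_r$ and $b=L_{r+1}$, and using that $L_r,L_{r+1}$ commute, we may write
$$L_r^k+(-1)^{k+1}L_{r+1}^k=(L_r+L_{r+1})\,q(L_r,L_{r+1})=q(L_r,L_{r+1})\,(L_r+L_{r+1})$$
for a suitable polynomial $q$. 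Since Proposition \ref{prop:commuting relations with generators}(3) gives $e(L_r+L_{r+1})=(L_r+L_{r+1})e=0$, both products $e\cdot\bigl(L_r^k+(-1)^{k+1}L_{r+1}^k\bigr)$ and $\bigl(L_r^k+(-1)^{k+1}L_{r+1}^k\bigr)\cdot e$ vanish; in particular $e$ commutes with $L_r^k+(-1)^{k+1}L_{r+1}^k$, hence with $p_k$.

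All the computations above are routine once Proposition \ref{prop:commuting relations with generators} is in hand. The one step that deserves a little care is the reduction for $s_i$: one must observe that the power sum $L_i^k+L_{i+1}^k$ is a polynomial in the elementary symmetric functions $L_i+L_{i+1}$ and $L_iL_{i+1}$, with which $s_i$ is already known to commute. Relatedly, the precise signs in the definition of $p_k$ are exactly what forces $L_r^k+(-1)^{k+1}L_{r+1}^k$ to be divisible by $L_r+L_{r+1}$, which is what makes Proposition \ref{prop:commuting relations with generators}(3) applicable; this sign bookkeeping is the main --- though minor --- obstacle.
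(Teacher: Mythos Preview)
Your proof is correct and follows essentially the same approach as the paper: both verify commutation with the generators $s_i$ and $e_{r,r+1}$ by using Proposition \ref{prop:commuting relations with generators}(1)--(5) in the same way, reducing the $s_i$ case to symmetric polynomials in $L_i,L_{i+1}$ and the $e_{r,r+1}$ case to the vanishing $e_{r,r+1}(L_r+L_{r+1})=(L_r+L_{r+1})e_{r,r+1}=0$. Your explicit factorization $L_r^k-(-1)^kL_{r+1}^k=(L_r+L_{r+1})q(L_r,L_{r+1})$ is a slightly more detailed justification of the step the paper states directly, but the argument is the same.
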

\Proof
From Proposition \ref{prop:commuting relations with generators} (1) and (2), we have
\eqn
&&(i,i+1)(L_i +L_{i+1})=(L_i +L_{i+1})(i,i+1), \\
&&(i,i+1)(L_i L_{i+1})=(L_{i+1} L_i)(i,i+1)=(L_i L_{i+1})(i,i+1)
\eneqn
for $i \neq r$.
  Thus $(i,i+1)$ commutes with every symmetric polynomials in $L_i$ and $L_{i+1}$. In particular, it commutes with the power sum symmetric polynomials $L_i^k+L_{i+1}^k$ $(k \ge 0)$.
 Combining  this fact with Proposition \ref{prop:commuting relations with generators} (4),
it follows that
\eqn
&&(i,i+1) (L_1^k +\cdots + L_{r}^k) = (L_1^k + \cdots + L_r^k)(i,i+1) \\
&&(i,i+1)(L_{r+1}^k + \cdots +L_{r+s}^k) = (L_{r+1}^k + \cdots + L_{r+s}^k) (i, i+1)
\eneqn
 for all %admissible
$i \neq r$.

From Proposition \ref{prop:commuting relations with generators} (3), we obtain
\eqn
e_{r,r+1}(L_r^k +(-1)^{k+1}L_{r+1}^k) =0 = (L_r^k +(-1)^{k+1} L_{r+1}^k)  e_{r,r+1}.
\eneqn
Hence, by using Proposition \ref{prop:commuting relations with generators} (5), we %can
 conclude that
$L_1^k+\cdots+L_r^k+(-1)^{k+1}(L_{r+1}^k+\cdots+L_{r+s}^k)$ is in the center of $B_{r,s}(\delta)$,
 since it commutes with all the generators.
\QED
%Note that  the above for  $k=1$ case was shown in \cite[Lemma 2.1]{BS}.
When $k=1$,  the above was shown in \cite[Lemma 2.1]{BS}. Indeed, the element $z_{r,s}$ in \cite[Lemma 2.1]{BS} is the same as $L_1+ \cdots +L_{r+s} -s \delta$.

\subsection{Supersymmetric polynomials}
In this section we recall the notion of supersymmetric polynomials. For details on supersymmetric polynomials, see for example, \cite{Moens}.

\begin{definition} \label{def:supersymmetric polynomial}
{\rm Let $r,s$ be nonnegative integers.
We say that an element $p$ in the polynomial ring  $\C[x_1,\ldots,x_r,y_1\ldots,y_s]$ is \emph{supersymmetric} if
\begin{enumerate}
\item[\rm {(1)}] $p$ is \emph{doubly symmetric}; i.e, it is symmetric in $x_1,\ldots,x_r$ and $y_1,\ldots, y_s$ separately,
\item[\rm{(2)}] $p$ satisfies the \emph{cancellation property}; i.e., the substitution $x_r=-y_1=t$ yields a polynomial in $x_1,\ldots,x_{r-1}, y_2,\ldots, y_s$ which is independent of $t$.
\end{enumerate}
We denote $S_{r,s}[x;y]$ the set of supersymmetric polynomials in $x_1,\ldots,x_r,y_1\ldots,y_s$.
It is a $\C$-subalgebra of  $\C[x_1,\ldots,x_r,y_1\ldots,y_s]^{\sym_r\times \sym_s}$, the algebra of doubly symmetric polynomials.}
\end{definition}

For $k \ge 0$, the \emph{ $k$-th \ power sum supersymmetric polynomial} is given by
\eqn
p_k(x_1,\ldots,x_r,y_1,\ldots,y_s):=x_1^k+\cdots +x_r^k+(-1)^{k+1}(y_1^k+\cdots +y_s^k).
\eneqn
It is easy to see that $p_k$ belongs to $S_{r,s}[x;y]$.
In \cite{Stem}, Stembridge showed that $S_{r,s}[x;y]$ is generated  by $\set{p_k}{k \ge 0}$.
Hence the following is an immediate consequence of Proposition \ref{prop:belongtocenter}.
\begin{corollary} \label{cor:centerssym}
For every supersymmetric polynomial $p$ in $S_{r,s}[x;y]$, the element
$$p(L_1,\ldots,L_{r+s})$$ belongs to the center $Z(B_{r,s}(\delta))$ of $B_{r,s}(\delta)$.
\end{corollary}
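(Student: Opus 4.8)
The plan is to reduce the statement about an arbitrary supersymmetric polynomial to the special case of the power sum supersymmetric polynomials $p_k$, which is exactly the content of Proposition \ref{prop:belongtocenter}. First I would recall Stembridge's theorem \cite{Stem}: the $\C$-algebra $S_{r,s}[x;y]$ is generated by the power sums $\set{p_k}{k \ge 1}$ (together with the constants). Thus any $p \in S_{r,s}[x;y]$ can be written as a polynomial expression $p = F(p_1, p_2, \ldots, p_N)$ with $F \in \C[z_1,\ldots,z_N]$ for some $N$.

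Next, I would apply the substitution homomorphism. Since the Jucys-Murphy elements $L_1,\ldots,L_{r+s}$ commute with one another by Proposition \ref{prop:Li are commuting}, evaluation at $(L_1,\ldots,L_{r+s})$ gives a well-defined algebra homomorphism from $\C[x_1,\ldots,x_r,y_1,\ldots,y_s]$ into $B_{r,s}(\delta)$; in particular it is multiplicative and additive, so
\eqn
p(L_1,\ldots,L_{r+s}) = F\bl p_1(L_1,\ldots,L_{r+s}),\ \ldots,\ p_N(L_1,\ldots,L_{r+s})\br.
\eneqn
By Proposition \ref{prop:belongtocenter}, each element $p_k(L_1,\ldots,L_{r+s}) = L_1^k+\cdots+L_r^k+(-1)^{k+1}(L_{r+1}^k+\cdots+L_{r+s}^k)$ lies in the center $Z(B_{r,s}(\delta))$. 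Since $Z(B_{r,s}(\delta))$ is a subalgebra of $B_{r,s}(\delta)$, it is closed under the polynomial operation $F$, and therefore $p(L_1,\ldots,L_{r+s}) \in Z(B_{r,s}(\delta))$, as claimed.

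There is no real obstacle here: the corollary is essentially formal once Proposition \ref{prop:belongtocenter} and Stembridge's generation theorem are in hand. The only points that require a word of care are (a) that the evaluation map is genuinely an algebra homomorphism, which uses the commutativity of the $L_k$ from Proposition \ref{prop:Li are commuting}, and (b) that the center is a subalgebra (hence stable under arbitrary polynomial combinations of its elements), which is immediate. Everything else is bookkeeping, so the proof is short.
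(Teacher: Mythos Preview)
Your proof is correct and follows exactly the paper's approach: the corollary is stated as an immediate consequence of Proposition \ref{prop:belongtocenter} together with Stembridge's theorem that $S_{r,s}[x;y]$ is generated by the power sums $p_k$. Your added remarks about commutativity of the $L_k$ (for the evaluation to be a homomorphism) and closure of the center under polynomial combinations make explicit what the paper leaves implicit.
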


\begin{remark} If we take the
modification in Remark \ref{rem:different} and focus on the case of $B_{r,s}(\delta)$,
then the above corollary corresponds to \cite[Corollary 7.2]{SartoriStroppel},
Note that in \cite[Corollary 7.2]{SartoriStroppel} they used a description of the center of the degenerate affine walled Brauer algebra.
%a way depending on a result in the case of the degenerate affine walled Brauer algebra.
%   assertion was also proved   in \cite[Corollary 7.2]{SartoriStroppel} (under  consideration of  the modification in Remark \ref{rem:different}), by using a description of the center of %a way depending on a result in the case of
%the degenerate affine walled Brauer algebra.
\end{remark}

The \emph{elementary supersymmetric polynomials} $$e_k(x_1,\ldots,x_r,y_1,\ldots,y_s) \quad (k \in  \Z_{\ge 0})$$
 are given by the generating function
\eqn
\sum_{k=0}^\infty  e_k(x_1,\ldots,x_r,y_1,\ldots,y_s) z^k
=\dfrac{\prod_{i=1}^r (1+x_i z)}{\prod_{j=1}^s (1-y_j z)}.
\eneqn
It is also known that $\set{e_k}{k \in \Z_{\ge 0}}$ generates the ring of supersymmetric polynomials (\cite[Corollary]{Stem}).
Then the lemma below follows immediately.
%Hence
%\eqn
%e_k(a_1,\ldots,a_r,b_1,\ldots,b_s)= e_k(c_1,\ldots,c_r,d_1,\ldots,d_s) \quad   \text{for all} \ k \ge 0
%\eneqn
%if and only if
%\eqn
%\dfrac{\prod_{i=1}^r (1+a_i t)}{\prod_{j=1}^s (1-b_j t)}
%=\dfrac{\prod_{i=1}^r (1+c_i t)}{\prod_{j=1}^s (1-d_j t)},
%\eneqn

\begin{lemma} \label{lem:distinct}
Let $(a_1,\ldots,a_r,b_1,\ldots,b_s)$ and $(c_1,\ldots,c_r,d_1,\ldots,d_s)$ be
 elements in $\C^{r+s}$.
Then the followings are equivalent.
\begin{enumerate}[\rm(1)]
\item For every supersymmetric polynomial $p \in S_{r,s}[x;y]$, we have
\eqn
p(a_1,\ldots,a_r,b_1,\ldots,b_s)= p(c_1,\ldots,c_r,d_1,\ldots,d_s).
\eneqn
% if and only if
%there exists a permutation $\sigma \in \sym_{r+s}$ such that
\item We have an equality
\eqn
\dfrac{\prod_{i=1}^r (1+a_i z)}{\prod_{j=1}^s (1-b_j z)}
=\dfrac{\prod_{i=1}^r (1+c_i z)}{\prod_{j=1}^s (1-d_j z)}
\eneqn
of rational functions in $z$.

\begin{comment}
        The sequences
\eqn
%\sigma
(a_1,\ldots, a_r, -d_1,\ldots,-d_s) \quad  \text{and} \quad (c_1,\ldots,c_r, -b_1,\ldots,-b_s)
\eneqn
are the same up to a place permutation.
%where $\sigma$ acts on $\C^{r+s}$ by place permutation.
\end{comment}

\begin{comment}
\item
There exists $0 \le t \le \min(r,s)$ and four sequences
\eqn
&1 \le i_1,\ldots,i_t \le r, & 1 \le i'_1,\ldots,i'_t \le r,  \\
&1 \le j_1,\ldots,j_t \le s, & 1 \le j'_1,\ldots,j'_t \le s
\eneqn
consisting of mutually distinct integers such that
\eqn
a_{i_k}+b_{j_k}=c_{i'_k}+d_{j'_k}=0 \quad \text{for}\ 1 \le  k \le t
\eneqn
and
\eqn
&\set{a_i}{ i \neq i_k \text{ for } 1 \le k\le t}  \cap \set{-b_j}{ j \neq j_k \text{ for } 1 \le k\le t}  =\emptyset, \\
&\set{c_{i'}}{ i' \neq i'_k \text{ for } 1 \le k\le t}  \cap \set{-d_{j'}}{ j' \neq j'_k \text{ for } 1 \le k\le t}  =\emptyset,
\eneqn
%
%\eqn
%&(\set{a_i}{1 \le i \le r} \backslash \set{a_{i_k}}{1 \le k \le t} ) \cap (\set{-b_j}{1 \le i \le r} \backslash \set{-b_{j_k}}{1 \le k \le t} ) =\emptyset, \\
%&(\set{c_i}{1 \le i \le r}  \backslash \set{c_{i'_k}}{1 \le k \le t} ) \cap (\set{-d_{j'}}{1 \le i \le r}  \backslash \set{-d_{j'_k}}{1 \le k \le t})=\emptyset,
%\eneqn
and the sequences
$$(a_1,\ldots, \widehat{a_{i_1}},\ldots,  \widehat{a_{i_t}}, \ldots, a_r) \ \text{and} \
(c_1,\ldots, \widehat{c_{i'_1}},\ldots,  \widehat{c_{i'_t}}, \ldots, c_r) $$
are the same up to a permutation, and the sequences
$$
(b_1,\ldots, \widehat{b_{j_1}},\ldots,  \widehat{b_{j_t}}, \ldots, b_s )  \ \text{and} \  (d_1,\ldots, \widehat{d_{j'_1}},\ldots,  \widehat{d_{j'_t}}, \ldots, b_s)$$
are also the same up to a place  permutation.
 How to show the uniqueness of $t$? It should be independent from the choice of $i_k$ and $j_k$.
 It is not clear yet that the third characterization is needed or not.
\end{comment}
\end{enumerate}
\end{lemma}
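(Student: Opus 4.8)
The plan is to prove the equivalence by showing $(2)\Rightarrow(1)$ is essentially trivial and $(1)\Rightarrow(2)$ follows from the generation statement for the elementary supersymmetric polynomials $\set{e_k}{k\in\Z_{\ge0}}$ that was just quoted from \cite{Stem}, together with the generating-function identity defining the $e_k$.

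For $(2)\Rightarrow(1)$: the ring $S_{r,s}[x;y]$ is generated over $\C$ by the elementary supersymmetric polynomials $e_k$, so it suffices to check that $e_k(a_1,\ldots,a_r,b_1,\ldots,b_s)=e_k(c_1,\ldots,c_r,d_1,\ldots,d_s)$ for all $k$. But these numbers are exactly the Taylor coefficients at $z=0$ of the rational functions $\prod_i(1+a_iz)/\prod_j(1-b_jz)$ and $\prod_i(1+c_iz)/\prod_j(1-d_jz)$ respectively; since (2) asserts these two rational functions are equal, their power series expansions agree term by term, hence $e_k(a;b)=e_k(c;d)$ for every $k$, and therefore $p(a;b)=p(c;d)$ for every $p\in S_{r,s}[x;y]$.

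For $(1)\Rightarrow(2)$: apply the hypothesis to $p=e_k$ for each $k\in\Z_{\ge0}$, obtaining $e_k(a;b)=e_k(c;d)$ for all $k$. By the generating-function definition of the $e_k$, the power series $\sum_{k\ge0}e_k(a;b)z^k$ and $\sum_{k\ge0}e_k(c;d)z^k$ coincide; but the first equals the rational function $\prod_{i=1}^r(1+a_iz)/\prod_{j=1}^s(1-b_jz)$ and the second equals $\prod_{i=1}^r(1+c_iz)/\prod_{j=1}^s(1-d_jz)$. Two rational functions whose Taylor expansions at $0$ agree are equal as rational functions (a rational function is determined by its germ at a point), which is precisely (2).

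The only point requiring a word of care — and the closest thing to an obstacle, though it is minor — is the legitimacy of identifying the generating function with a genuine rational function: one must note that the denominators $\prod_j(1-b_jz)$ and $\prod_j(1-d_jz)$ are nonzero as formal power series (their constant term is $1$), so the expansions make sense, and then invoke the standard fact that if two elements of $\C(z)$ have the same image in the formal power series ring $\C[[z]]$ they are equal. Everything else is immediate from Stembridge's generation result, so no further ingredients are needed; in particular no appeal to the combinatorial "place permutation" description (the commented-out part of the statement) is required.
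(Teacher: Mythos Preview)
Your proof is correct and is exactly the argument the paper has in mind: the paper simply states that the lemma ``follows immediately'' from Stembridge's result that $\{e_k\}_{k\ge0}$ generates $S_{r,s}[x;y]$ together with the generating-function identity for the $e_k$, and you have spelled out precisely that reasoning.
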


For a partition $\mu=(\mu_1,\mu_2,\ldots)$, a filling of $\mu$ with entries $1,\ldots, |\mu|$ is called a
\emph{standard tableau of shape $\mu$}, when the entries in each row and each column are strictly increasing, from left to right and from top to bottom, respectively.
Let $\mathfrak t^\mu$ be the standard tableau such that the entries $1,2,\ldots,|\mu|$ appear in increasing order from left to right along successive rows.
Recall that, for a box $u$ in $[\mu]$, \emph{the content of $u$} is given by the integer $b-a$, where $u$ is located $(a,b)$-position in $[\mu]$.
For $1 \le i \le |\mu|$, we define $\cont(\mu, i)$ to be
the  content of the box in $\mu$ with entry $i$ in the tableau $\mathfrak t^\mu$.
%the integer $b-a$, where the entry $i$ of the tableau
%$\mathfrak t^\mu$ is located at $(a,b)$-position in the Young diagram $[\mu]$.
It is called the \emph{content of $\mu$ at $i$}.
Note that the multiset $\set{\cont(\mu, i)}{1\le i \le |\mu|}$ determines the Young diagram $[\mu]$ and
hence the partition $\mu$.

For each $\la \in \La_{r,s}^t$,
set
\eqn
c(\la,i) := \begin{cases}
\cont(\la^L,i) & \text{if }\ 1 \le i \le r-t,\\
0 & \text{if }\ r-t+1 \le i \le r+t, \\
\cont(\la^R,i-r-t)+\delta & \text{if }\ r+t+1 \le i \le r+s.
\end{cases}
\eneqn

\begin{comment}

The following lemma can be completed in a ``'if and only if form'', if we use the result in
\cite{CDDM}. But, since we don't need another direction, it is better to keep the following form and its proof, for the completeness. We may write a remark on it or a section at the last of this paper on the connection of blocks with  our result.

\end{comment}

\begin{lemma} \label{lem:separation}
 Assume that $B_{r,s}(\delta)$ is semisimple.
If $\la \neq \mu$ for $\la, \mu \in \La_{r,s}$, then there is a supersymmetric polynomial $p^{\la, \mu}$ such that
$$p^{\la, \mu}((c(\la,i))_{1\le i \le r+s}) \neq p^{\la, \mu}((c(\mu,i))_{1\le i \le r+s}).$$
\end{lemma}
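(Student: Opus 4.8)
The plan is to translate the statement, by means of Lemma~\ref{lem:distinct}, into a purely combinatorial separation property of the content data $c(\la,\cdot)$, and then to establish that property by running through the four cases of the semisimplicity criterion (Proposition~\ref{prop:semisimplicity}). By Lemma~\ref{lem:distinct} applied to the tuples $\bigl(c(\la,i)\bigr)_{1\le i\le r+s}$ and $\bigl(c(\mu,i)\bigr)_{1\le i\le r+s}$, a supersymmetric polynomial $p^{\la,\mu}$ with the required property exists if and only if the two rational functions
\[
\Phi_\nu(z):=\frac{\prod_{i=1}^{r}\bigl(1+c(\nu,i)z\bigr)}{\prod_{j=1}^{s}\bigl(1-c(\nu,r+j)z\bigr)},\qquad \nu\in\{\la,\mu\},
\]
are distinct; so it suffices to show that $\Phi_\la=\Phi_\mu$ forces $\la=\mu$. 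Writing $\la=(\la^L,\la^R)\in\La_{r,s}^{t}$ and letting $\cont(u)$ denote the content of a box $u$, the $2t$ entries among $c(\la,1),\dots,c(\la,r+s)$ that equal $0$ contribute only trivial factors, so that
\[
\Phi_\la(z)=\frac{\prod_{u\in[\la^L]}\bigl(1+\cont(u)z\bigr)}{\prod_{v\in[\la^R]}\bigl(1-(\cont(v)+\delta)z\bigr)}.
\]

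Next I would pass to the reciprocal variable. Substituting $z\mapsto 1/w$ and clearing the powers of $w$ shows that $\Phi_\la=\Phi_\mu$ is equivalent to the identity of \emph{monic} polynomials
\[
\prod_{u\in[\la^L]}\bigl(w+\cont(u)\bigr)\prod_{v\in[\mu^R]}\bigl(w-\cont(v)-\delta\bigr)=\prod_{u\in[\mu^L]}\bigl(w+\cont(u)\bigr)\prod_{v\in[\la^R]}\bigl(w-\cont(v)-\delta\bigr)
\]
in $\C[w]$, hence, by unique factorization in $\C[w]$, to the equality of multisets
\begin{equation*}
\{-\cont(u)\mid u\in[\la^L]\}\uplus\{\cont(v)+\delta\mid v\in[\mu^R]\}=\{-\cont(u)\mid u\in[\mu^L]\}\uplus\{\cont(v)+\delta\mid v\in[\la^R]\}.\tag{$\ast$}
\end{equation*}
(The point of passing to the reciprocal variable is that it makes everything monic and eliminates the trivial factors produced by content-$0$ boxes.) In the analysis below I will also use, as recalled before the statement, that the multiset of contents of a partition determines that partition.

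Assume now that $B_{r,s}(\delta)$ is semisimple; I would examine the four cases of Proposition~\ref{prop:semisimplicity}. If $r=0$ or $s=0$, one of $\la^R,\mu^R$ (respectively of $\la^L,\mu^L$) is empty and $(\ast)$ reduces at once to $\{-\cont(u)\mid u\in[\la^L]\}=\{-\cont(u)\mid u\in[\mu^L]\}$ (respectively to the corresponding statement for the right-hand partitions), whence $\la=\mu$; so from now on $r,s\ge 1$. If $\delta\notin\Z$, the multisets $\{-\cont(u)\}$ consist of integers whereas the multisets $\{\cont(v)+\delta\}$ lie in the coset $\delta+\Z$, which is disjoint from $\Z$; comparing in $(\ast)$ the sub-multisets lying in $\Z$ and those lying in $\delta+\Z$ separately gives $\{\cont(u)\mid u\in[\la^L]\}=\{\cont(u)\mid u\in[\mu^L]\}$ and $\{\cont(v)\mid v\in[\la^R]\}=\{\cont(v)\mid v\in[\mu^R]\}$, hence $\la=\mu$. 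If $|\delta|>r+s-2$, I would invoke the elementary bound $|\cont(u)|\le|\nu|-1$ for every box $u$ of a partition $\nu$: then the multisets $\{-\cont(u)\}$ lie in $[-(r-1),r-1]$, while for $\delta>r+s-2$ every element of each multiset $\{\cont(v)+\delta\}$ is at least $\delta-(s-1)>r-1$, and for $\delta<-(r+s-2)$ every such element is at most $\delta+(s-1)<-(r-1)$; in either case the two groups of roots occurring in $(\ast)$ are disjoint, so they match up separately and we conclude as before. The only remaining possibility is $\delta=0$ with $(r,s)\in\{(1,2),(1,3),(2,1),(3,1)\}$; here $\La_{r,s}$ has at most five elements, so I would finish by tabulating the rational functions $\Phi_\la$ for these few weights and checking that they are pairwise distinct.

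The steps for $r=0$ or $s=0$ and the bookkeeping in the reciprocal substitution are routine. The crux of the two generic cases is the disjointness estimate separating $\{-\cont(u)\}$ from $\{\cont(v)+\delta\}$ inside $(\ast)$, which is precisely where the hypotheses $\delta\notin\Z$ and $|\delta|>r+s-2$ enter. The one genuinely ad hoc step is the last case, $\delta=0$ with the four exceptional pairs $(r,s)$, which must be verified by an explicit computation; it is also the only case requiring a restriction on $(r,s)$ rather than merely a numerical condition on $\delta$.
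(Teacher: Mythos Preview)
Your proof is correct and follows essentially the same route as the paper: reduce via Lemma~\ref{lem:distinct} to the rational-function identity, then run through the four cases of Proposition~\ref{prop:semisimplicity} using disjointness of the content ranges for cases (2) and (3) and direct inspection for case (4).

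The one notable variation is your passage to the reciprocal variable $w=1/z$, which converts the factors $(1+\cont(u)z)$ into monic linear factors $(w+\cont(u))$. This is a mild improvement over the paper's argument: working in $z$, the content-$0$ boxes of $\la^L$ and $\mu^L$ contribute trivial factors, so the paper only recovers equality of the \emph{nonzero} content multisets and must then argue separately (via a ``no content of $\la^R$ equals $-\delta$'' step and a degree count on the denominator) that $t=t'$ before concluding. Your monic formulation in $w$ keeps track of content-$0$ boxes automatically, so once the two groups of roots in $(\ast)$ are separated you immediately obtain the full content multisets and hence $\la^L=\mu^L$, $\la^R=\mu^R$. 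The cost is nil and the gain is a cleaner case (2)/(3).
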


\begin{remark}
If we combine \cite[Corollary 7.7]{CDDM} with Lemma \ref{lem:equiv_balanced} in the last section, then the above lemma follows.
But we include the following proof for completeness.
\end{remark}

\Proof
Let $\la \in \Lambda_{r,s}^t$, $\mu \in \Lambda_{r,s}^{t'}$ for some $0 \le  t,t'  \le \min(r,s)$.

Suppose that
$$p((c(\la,i))_{1\le i \le r+s}) =  p((c(\mu,i))_{1\le i \le r+s})$$
for every $p \in S_{r,s}[x;y]$.
Equivalently,
\eq \label{eq:contents rational function}
&&\dfrac{\prod_{i=1}^{r-t} (1+\cont(\la^L,i) z)}{\prod_{j=r+t+1}^{r+s} (1-(\cont(\la^R,j)+\delta) z)}
=\dfrac{\prod_{i=1}^{r-t'} (1+\cont(\mu^L,i) z)}{\prod_{j=r+t'+1}^{r+s} (1-(\cont(\mu^R,j)+\delta) z)}.
\eneq

We shall show that
$\lambda=\mu$ for all cases in Proposition \ref{prop:semisimplicity}.

{\bf Case (1) :} Assume that $r=0$ or $s=0$. Then we have $t=t'=0$.

 Assume that $s=0$. Then $\la^R=\mu^R=\emptyset$ and we have
$$\prod_{i=1}^{r} (1+\cont(\la^L,i) z)
=\prod_{i=1}^{r} (1+\cont(\mu^L,i) z).
$$
 It follows that the multisets of contents of $\la^L$ and $\mu^L$  are the same.
 Thus $\la^L= \mu^L $.
The proof for the case $r=0$ is the same.

{\bf Case (2), (3) : } Assume that $\delta \notin \Z$ or $|\delta| > r+s-2$.
We may further assume that $r>0$ and $s>0$ so that $r+s-2 \ge 0$.
Now we have
\eq
&\prod_{i=1}^{r-t} (1+\cont(\la^L,i) z)
=\prod_{i=1}^{r-t'} (1+\cont(\mu^L,i) z)  \quad \text{and} \nonumber \\ %\label{eq:upper}\\
&\prod_{j=r+t+1}^{r+s} (1-(\cont(\la^R,j)+\delta) z)
=\prod_{j=r+t'+1}^{r+s} (1-(\cont(\mu^R,j)+\delta) z). \label{eq:lower}
\eneq
Indeed if $\delta \notin \Z$ then it is trivial. Otherwise, we have
\eqn
&&| \cont(\la^L,i)+\cont(\la^R,j) |
\le (r-t-1) + (s-t-1) \le  r+s-2 < |\delta|, \\
&&| \cont(\mu^L,i)+\cont(\mu^R,j) |
\le (r-t'-1) + (s-t'-1) \le  r+s-2  < |\delta|.
\eneqn

It follows that  the multiset of \emph{nonzero} contents of $\la^L$ is the same as  the one of $\mu^L$
 and the multiset of  contents of $\la^R$ \emph{which are not equal to $-\delta$} is the same as the one of $\mu^R$.

Note that
\eqn
\set{j}{\cont(\la^R,i)=-\delta}= \set{j}{\cont(\mu^R,j)=-\delta} =\emptyset.
\eneqn
Indeed it it trivial if $\delta \notin \Z$ and otherwise we have
\eqn
&&|\cont(\la^R,i)|\le s-t-1\le r+s-t-2 \le  r+s-2 < |\delta|, \\
&&|\cont(\mu^R,i)|\le s-t'-1\le r+s-t'-2 \le  r+s-2  < |\delta|.
\eneqn
Hence, observing the degree of \eqref{eq:lower}, we have $t=t'$.
Thus the multiset of contents of $\la^L$ is the same as  the one of $\mu^L$
 and the multiset of contents of $\la^R$ is the same as the one of $\mu^R$.
 It follows that $\la=\mu$.

{\bf Case (4) : } Assume $\delta =0$ and $(r,s) \in \{(1,2),(2,1),(1,3),(3,1)\}$.
If $(r,s)=(3,1)$, then we have
\eqn
\set{(c(\la,i))_{1\le i  \le4} }{\la \in \dot{\Lambda}_{3,1}} =\Big\{(0,1,2,0), (0,1,-1,0), (0,-1,-2,0), (0,1,0,0), (0,-1,0,0) \Big\}.
\eneqn
It is easy to observe that \eqref{eq:contents rational function} holds if and only if $\la=\mu$.

The cases $(r,s) \in \{(1,2),(2,1),(1,3)\}$ can be checked in a  similar way.
\QED

\vskip1em
%%%%%%%%%%%%%%%%%%%%%%%%%%%%%%%%%%%%%%%%%%%%%%%%%%%%%%%%%%%%%%%%%%%%%%%%%%%%%%%%%%%%%%%%%%%%%%%%%%%%%%%%%%%%%%%%%%%%%
\section{Center of the walled Brauer algebra}

By Proposition \ref{prop:belongtocenter}, the power sum supersymmetric polynomials in the Jucys-Murphy elements belong to
the center of $B_{r,s}(\delta)$. Hence they act by  scalar multiplications on a cell module by Lemma \ref{lem:one dimensional}.
More precisely, we have
\begin{prop} \label{prop:action on a cell module}
For $\la \in \Lambda_{r,s}^t$ and for $k \ge 0$, we have
\eqn
p_k(L_1,\ldots,L_r,L_{r+1},\ldots,L_{r+s}) = \sum_{i=1}^{r-t} \cont(\la^L,i)^k +  (-1)^{k+1} \sum_{i=1}^{s-t}(\cont(\la^R,i)+\delta)^k
\eneqn
on  the cell module  $C_{r,s}(\la)$.
\end{prop}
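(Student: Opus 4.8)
The plan is to reduce the statement to the computation of a single scalar and then to evaluate that scalar by induction on the number $t$ of horizontal strands. Since $p_k(L_1,\dots,L_{r+s})=L_1^k+\dots+L_r^k+(-1)^{k+1}(L_{r+1}^k+\dots+L_{r+s}^k)$ lies in $Z(B_{r,s}(\delta))$ by Proposition~\ref{prop:belongtocenter}, it acts on the indecomposable cell module $C_{r,s}(\la)$ as an element of $\End_{B_{r,s}(\delta)}(C_{r,s}(\la))\simeq\C$ by Lemma~\ref{lem:one dimensional}; denote this scalar by $\zeta_k=\zeta_k(r,s,\la)$. In the basis of $C_{r,s}(\la)$ coming from $X^t_{r,s}$ each $L_j$ acts by a matrix whose entries are polynomials in $\delta$, so $\zeta_k$ is a polynomial in $\delta$; since the claimed right-hand side is also a polynomial in $\delta$, it suffices to prove the formula for all $\delta\neq0$, which I do by induction on $t$.

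\emph{Base case $t=0$.} Here $I^0_{r,s}=B_{r,s}(\delta)/J^1_{r,s}$ is identified with $\C[\sym_r\times\sym_s]$, so $C_{r,s}(\la)\simeq S(\la^L)\boxtimes S(\la^R)$ with $B_{r,s}(\delta)$ acting through the quotient map $B_{r,s}(\delta)\epito\C[\sym_r\times\sym_s]$; in particular every $e_{j,k}$ acts by $0$. Hence $L_j$ acts as the $j$-th Jucys--Murphy element $\sum_{i<j}(i,j)$ of $\C[\sym_r]$ for $1\le j\le r$, and as $\sum_{i=r+1}^{j-1}(i,j)+\delta$, a $\delta$-shifted Jucys--Murphy element of $\C[\sym_s]$, for $r+1\le j\le r+s$. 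Consequently $\sum_{j=1}^r L_j^k$ and $\sum_{j=r+1}^{r+s}L_j^k$ are symmetric polynomials in the Jucys--Murphy elements of $\sym_r$, respectively in the $\delta$-shifted ones of $\sym_s$, hence central; by the classical results on Jucys--Murphy elements they act on the simple modules $S(\la^L)$ and $S(\la^R)$ by $\sum_{i=1}^{r}\cont(\la^L,i)^k$ and $\sum_{i=1}^{s}(\cont(\la^R,i)+\delta)^k$ respectively, which is the asserted formula for $t=0$.

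\emph{Inductive step ($t\ge1$, $\delta\neq0$).} Put $f:=\delta^{-1}e_{r,r+1}$, an idempotent, and recall the standard facts that $fB_{r,s}(\delta)f\cong B_{r-1,s-1}(\delta)$ (identifying the two algebras by adjoining the arc on the vertices $r,r+1$) and that this isomorphism carries $fC_{r,s}(\la)$ onto the cell module $C_{r-1,s-1}(\la)$ for the same bipartition $\la\in\La^{t-1}_{r-1,s-1}$; this module is nonzero since $f$ fixes the ``straight'' basis diagram $\tau_0=\prod_{m=1}^{t}e_{r+1-m,r+m}$ of $C_{r,s}(\la)$. Using Lemma~\ref{lem:relations between horizontal lines and vertical lines}(2),(4) one checks that under this isomorphism $L_jf$ corresponds to the $j$-th Jucys--Murphy element $\widetilde L_j$ of $B_{r-1,s-1}(\delta)$ for $1\le j\le r-1$, and $L_{j+2}f$ corresponds to $\widetilde L_j$ for $r\le j\le r+s-2$. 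Moreover $L_r$ and $L_{r+1}$ commute (Proposition~\ref{prop:Li are commuting}) and $(L_r+L_{r+1})e_{r,r+1}=0$ (Proposition~\ref{prop:commuting relations with generators}(3)), so by an easy induction $L_{r+1}^{k}e_{r,r+1}=(-1)^kL_r^{k}e_{r,r+1}$, whence $\bigl(L_r^k+(-1)^{k+1}L_{r+1}^k\bigr)f=0$; and $(L_jf)(L_{j'}f)=(L_jL_{j'})f$ whenever $j,j'\notin\{r,r+1\}$ by Proposition~\ref{prop:commuting relations with generators}(5). Combining these, $p_k(L_1,\dots,L_{r+s})f$ corresponds, under $fB_{r,s}(\delta)f\cong B_{r-1,s-1}(\delta)$, to $p_k(\widetilde L_1,\dots,\widetilde L_{r+s-2})$. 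Evaluating both sides on the nonzero module $fC_{r,s}(\la)\cong C_{r-1,s-1}(\la)$, the left side acts by $\zeta_k(r,s,\la)$ and the right side by $\zeta_k(r-1,s-1,\la)$, so $\zeta_k(r,s,\la)=\zeta_k(r-1,s-1,\la)$; iterating $t$ times and invoking the base case gives the formula.

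The step I expect to be most delicate is the identification, under the truncation isomorphism, of the Jucys--Murphy elements of $B_{r-1,s-1}(\delta)$ with $\{\,L_jf\,\}$ --- in particular verifying that the tails $-\sum_i e_{i,j}$ transform correctly once the two arc-vertices $r,r+1$ are removed, which is precisely where Lemma~\ref{lem:relations between horizontal lines and vertical lines}(2),(4) is used --- together with confirming that $fC_{r,s}(\la)$ is indeed the lower cell module. An alternative that bypasses the truncation is to act with $p_k(L_1,\dots,L_{r+s})$ directly on the distinguished vector $\tau_0\otimes(w_L\boxtimes w_R)$, with $w_L,w_R$ seminormal vectors of $S(\la^L),S(\la^R)$; but then one must control the $X^t_{r,s}$-expansion of $L_j\bigl(\tau_0\otimes(w_L\boxtimes w_R)\bigr)$ for the indices $j$ belonging to the arcs, and the transposition summands of $L_j$ do not act triangularly in this basis, so the inductive truncation is the cleaner route.
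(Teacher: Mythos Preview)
Your argument is correct but follows a genuinely different route from the paper. The paper does not use induction on $t$ or the idempotent truncation $fB_{r,s}(\delta)f\cong B_{r-1,s-1}(\delta)$ at all. Instead it argues directly on the single generator $\tau_t\otimes v$ of $C_{r,s}(\la)$, where $\tau_t=e_{r,r+1}e_{r-1,r+2}\cdots e_{r-t+1,r+t}$: a dedicated lemma (Lemma~\ref{lem:relations between tau and L_i}) shows that $(L_{r-a+1}+L_{r+a})\tau_t=0$ for $1\le a\le t$ and that $\bigl(-\sum_{i=r-t+1}^r e_{i,j}+\sum_{i=r+1}^{r+t}(i,j)\bigr)\tau_t=0$ for $j\ge r+t+1$. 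Together with $e_{i,j}\tau_t\in J^{t+1}_{r,s}$ for $i\le r-t<r+t<j$, this kills the ``middle'' contribution $\sum_{r-t+1\le j\le r+t}$ of $p_k$ and reduces $L_j\tau_t$ for $j\ge r+t+1$ to $\tau_t\bigl(\sum_{i=r+t+1}^{j-1}(i,j)+\delta\bigr)$, so one lands immediately in $\C[\sym_{r-t}\times\sym_{s-t}]$ acting on $S(\la^L)\boxtimes S(\la^R)$ and concludes via the classical content formula.

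The trade-offs: the paper's approach is self-contained and uniform in $\delta$ (no polynomial-in-$\delta$ continuation needed, no external fact about $fC_{r,s}(\la)$), at the price of proving the combinatorial lemma about $\tau_t$. Your approach is more structural, reusing the standard truncation machinery and avoiding any bespoke lemma, but it imports the identification $fC_{r,s}(\la)\cong C_{r-1,s-1}(\la)$ and has to restrict to $\delta\neq0$ and then extend by continuity. Your verification that $fL_jf$ maps to $\widetilde L_{j-2}$ for $j\ge r+2$ is correct (it is exactly the cancellation of $-e_{r,j}$ against $(r+1,j)$ already visible in the proof of Proposition~\ref{prop:commuting relations with generators}(5)), and your induction $L_{r+1}^k e_{r,r+1}=(-1)^k L_r^k e_{r,r+1}$ needs only $[L_r,L_{r+1}]=0$ and $(L_r+L_{r+1})e_{r,r+1}=0$, not that $L_r$ commute with $e_{r,r+1}$.
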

When $k=1$,  the above  can be obtained from \cite[Lemma 2.3]{BS} directly, because $z_{r,s}$ in \cite[Lemma 2.3]{BS} is
the same as $L_1+ \cdots +L_{r+s} -s \delta$.

To prove Proposition \ref{prop:action on a cell module}, we need the following lemma.
We use the same technique in \cite[Lemma 2.3]{BS}. For reader's convenience, we  include a proof.

\begin{lemma}  \label{lem:relations between tau and L_i}
 For each $1 \le t \le \min(r,s)$, set
 \eq \label{eq:taut}
\tau_t :=e_{r,r+1} e_{r-1,r+2} \cdots e_{r-t+1, r+t} \in B_{r,s}(\delta).
 \eneq
 We have
  \begin{enumerate}[\rm(1)]
    \item  $ (L_r +L_{r+1}) \tau_t =(L_{r-1}+L_{r+2}) \tau_t  = \cdots = (L_{r-t+1} +L_{r+t})  \tau_t  =0 ,$
    \item $ \Big(- \sum_{i=r-t+1}^r e_{i,j} + \sum_{i=r+1}^{r+t} (i, j) \Big)  \tau_t =0$ for any $ j  \ge r+t+1$.
  \end{enumerate}
\end{lemma}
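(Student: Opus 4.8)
The plan is to exploit, throughout, that the $t$ factors $e_{r-a+1,r+a}$ ($1\le a\le t$) occurring in $\tau_t$ have pairwise disjoint index sets, so they commute with one another by Lemma~\ref{lem:relations between horizontal lines and vertical lines}(3). Consequently $\tau_t$ may be rewritten with its factors in any order; in particular any single factor $e_{r-a+1,r+a}$ may be brought to the front of $\tau_t$. Part (2) is then almost immediate: fix $a$ with $1\le a\le t$ and write $\tau_t=e_{r-a+1,r+a}\,\rho$, with $\rho$ the product of the remaining $t-1$ factors. Since $j\ge r+t+1$, Lemma~\ref{lem:relations between horizontal lines and vertical lines}(4) gives $e_{r-a+1,j}\,e_{r-a+1,r+a}=e_{r-a+1,j}(r+a,j)$, and then Lemma~\ref{lem:relations between horizontal lines and vertical lines}(2) gives $e_{r-a+1,j}(r+a,j)=(r+a,j)\,e_{r-a+1,r+a}$; hence $e_{r-a+1,j}\,\tau_t=(r+a,j)\,\tau_t$. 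Summing these identities over $a=1,\dots,t$ and using $\sum_{a=1}^t e_{r-a+1,j}=\sum_{i=r-t+1}^{r} e_{i,j}$ and $\sum_{a=1}^t (r+a,j)=\sum_{i=r+1}^{r+t}(i,j)$ yields (2).

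For part (1) I would argue by induction on $t$, the case $t=1$ being exactly Proposition~\ref{prop:commuting relations with generators}(3). For the inductive step, write $\tau_t=\tau_{t-1}\,e_{r-t+1,r+t}$ from the definition \eqref{eq:taut}. If $1\le k\le t-1$, the inductive hypothesis gives $(L_{r-k+1}+L_{r+k})\tau_{t-1}=0$, hence $(L_{r-k+1}+L_{r+k})\tau_t=\bigl((L_{r-k+1}+L_{r+k})\tau_{t-1}\bigr)e_{r-t+1,r+t}=0$. The only new case is $k=t$. Here I would instead write $\tau_t=e_{r-t+1,r+t}\,\tau_{t-1}$ (by commuting factors) and expand $(L_{r-t+1}+L_{r+t})e_{r-t+1,r+t}$ using the definition of the Jucys--Murphy elements together with parts (2) and (4) of Lemma~\ref{lem:relations between horizontal lines and vertical lines}, obtaining
\[
(L_{r-t+1}+L_{r+t})\,e_{r-t+1,r+t}=\sum_{b=1}^{t-1}\Bigl(e_{r-t+1,r+b}(r+b,r+t)-e_{r-b+1,r+t}(r-b+1,r-t+1)\Bigr).
\]
Multiplying on the right by $\tau_{t-1}$, it then suffices to check, for each $b\in\{1,\dots,t-1\}$, that
\[
e_{r-t+1,r+b}(r+b,r+t)\,\tau_{t-1}=e_{r-b+1,r+t}(r-b+1,r-t+1)\,\tau_{t-1}.
\]
Pulling the factor $e_{r-b+1,r+b}$ of $\tau_{t-1}$ to the front and applying Lemma~\ref{lem:relations between horizontal lines and vertical lines}(2) to each side (and Lemma~\ref{lem:relations between horizontal lines and vertical lines}(3) to reorder the two distinct $e$'s that appear), both sides become $e_{r-b+1,r+t}\,e_{r-t+1,r+b}$ followed by a transposition and then the remaining factors; the two transpositions that occur, $(r+b,r+t)$ and $(r-b+1,r-t+1)$, are interchanged in this product by the four-term relation Lemma~\ref{lem:relations between horizontal lines and vertical lines}(5). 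Hence the two sides agree and $(L_{r-t+1}+L_{r+t})\tau_t=0$, completing the induction.

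The step I expect to be the real obstacle is the $k=t$ case of part (1). One cannot simply invoke ``$e_{i,j}(L_i+L_j)=0$'', since that holds only for $(i,j)=(r,r+1)$; the correct expansion of $(L_{r-t+1}+L_{r+t})e_{r-t+1,r+t}$ produces the correction terms displayed above, and the point is that each of them is annihilated by the appropriate factor of $\tau_{t-1}$ precisely because of Lemma~\ref{lem:relations between horizontal lines and vertical lines}(5). Carrying out this expansion correctly --- keeping track of which index of which $e_{i,j}$ each transposition from the definition of $L_k$ acts on, and in what order the relations of Lemma~\ref{lem:relations between horizontal lines and vertical lines} are applied --- is where the bookkeeping has to be done with care.
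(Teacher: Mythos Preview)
Your proof is correct. Part~(2) is essentially the paper's argument rephrased. For part~(1) you take a somewhat different route: the paper does \emph{not} induct on $t$. Instead, for a fixed $a\in\{r-t+1,\dots,r\}$ with $\tilde a=2r+1-a$, it decomposes $L_a+L_{\tilde a}$ according to whether the summation index lies in $I=\{1,\dots,r-t\}$, in $K=\{r-t+1,\dots,r\}$ with $i<a$, $i=a$, or $i>a$, and then checks directly that each block is annihilated by the corresponding one or two factors $e_{a,\tilde a}$, $e_{i,\tilde i}$ of $\tau_t$, via four short identities that boil down to Lemma~\ref{lem:relations between horizontal lines and vertical lines}(2)--(5). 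Your inductive framing buys a cleaner reduction (only the single new case $k=t$ requires work) and makes the role of the four-term relation Lemma~\ref{lem:relations between horizontal lines and vertical lines}(5) explicit; the paper's direct argument avoids the induction overhead and treats all $a$ uniformly. Both rest on the same underlying relations, so neither is substantially shorter or more illuminating than the other.
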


\Proof
(1) Let $I:= \{1, \ldots, r-t \}, \  J=\{r+t+1, \ldots, r+s\}$ and $K:= \{r-t+1, \ldots, r \}$.
For $a \in K$, let $\widetilde{a}:= 2r+1-a$. We shall show that
$$(L_a+L_{\widetilde{a}})\tau_t =0    \quad \quad \text{         for all $a \in K$}.$$

We can obtain the following relations using Lemma \ref{lem:relations between horizontal lines and vertical lines}:
\begin{enumerate}[{\rm(1)}]
\item $((i,a)-e_{i,{\widetilde{a}}})e_{a,\widetilde{a}}=0$ for $i \in I,$
\item $((i,a)-e_{i,\widetilde{a}})e_{i,\widetilde{i}}e_{a,\widetilde{a}} =0$ for $i \in K, i <a,$
\item $(-e_{a,\widetilde{a}} +\delta) e_{a,\widetilde{a}}=0,$
\item $(-e_{i,\widetilde{a}} +(\widetilde{i},\widetilde{a}))e_{i,\widetilde{i}}e_{a,\widetilde{a}}=0$ for $i \in K, i>a.$
\end{enumerate}
More precisely, we can check that
\vskip3mm
\begin{enumerate}[{\rm(1)}]
\item $((i,a)-e_{i,{\widetilde{a}}})e_{a,\widetilde{a}}=e_{i,\widetilde{a}}(i,a)-e_{i,\widetilde{a}}(i,a)=0,$
\item $((i,a)-e_{i,\widetilde{a}})e_{i,\widetilde{i}}e_{a,\widetilde{a}}
=e_{a, \widetilde{i}}(i,a)e_{a,\widetilde{a}}-e_{i,\widetilde{a}}(\widetilde{i}, \widetilde{a})e_{a,\widetilde{a}}
=e_{a, \widetilde{i}}e_{i,\widetilde{a}}(i,a)-e_{i,\widetilde{a}}e_{a,\widetilde{i}}(\widetilde{i},\widetilde{a})=0,$
\item $(-e_{a,\widetilde{a}} +\delta) e_{a,\widetilde{a}}=-\delta e_{a,\widetilde{a}}+\delta e_{a,\widetilde{a}}=0,$
\item $(-e_{i,\widetilde{a}} +(\widetilde{i},\widetilde{a}))e_{i,\widetilde{i}}e_{a,\widetilde{a}}
=-e_{i,\widetilde{a}}(\widetilde{a},\widetilde{i})e_{a,\widetilde{a}}+e_{i,\widetilde{a}}(\widetilde{i},\widetilde{a})e_{a,\widetilde{a}}
=-e_{i,\widetilde{a}}e_{a,\widetilde{i}}(\widetilde{a},\widetilde{i})+e_{i,\widetilde{a}}e_{a,\widetilde{i}}(\widetilde{i},\widetilde{a})=0.$
\end{enumerate}
\vskip3mm

We separate $L_a= \sum_{i \in I} (i,a) + \sum_{i < a, i\in K} (i,a)$, and
$$L_{\widetilde{a}}= - \sum_{i \in I} e_{i,\widetilde{a}} - \sum_{i<a, i\in K} e_{i,\widetilde{a}} -e_{a,\widetilde{a}}
-\sum_{i>a, i \in K} e_{i,\widetilde{a}}+\sum_{i>a, i \in K}
%{r+1 \le  \widetilde{i} < \widetilde{a}}
%{ i>a, i \in K }
(\widetilde{i},\widetilde{a}) +\delta.$$
Note that all the factors in \eqref{eq:taut}  are commuting to each other by Lemma \ref{lem:relations between horizontal lines and vertical lines} (3).
Using the relations (1)-(4), we have $(L_a +L_{\widetilde{a}})\tau_t=0.$

(2) For  $i\in K$ and $j \in J$, we have
\eqn
(-e_{i, j} +(\widetilde{i} , j) ) e_{i,\widetilde{i}} =-e_{i, j}( j, \widetilde{i}) +e_{i,j} (\widetilde{i}, j)=0.
\eneqn
%Since $e_{i,j}$ and $e_{i',j'}$ pairwise commute,
Thus %by a successive application of Lemma \ref{lem:relations between horizontal lines and vertical lines} (1) and (3),
we have
\eqn
(-e_{i,j} +(\widetilde{i} , j) )\tau_t=0.
\eneqn
It follows that
 \eqn
 \Big(-\sum_{i=r-t+1}^r e_{i,j}+\sum_{i=r+1}^{r+t} (i,j) \Big) \tau_t
=\sum_{i=r-t+1}^r (-e_{i,j}+(\widetilde{i},j)) \tau_t  =0.
\eneqn
\QED

Now we prove Proposition \ref{prop:action on a cell module}.

\Proof
 In $C_{r,s}(\lambda)$, the element $\tau \otimes v$ generates $C_{r,s}(\lambda)$
as a $B_{r,s}(\delta)$-module, where $\tau$ is the image of $\tau_t= e_{r,r+1} e_{r-1, r+2} \cdots e_{r-t+1, r+t} $
 $(t \ge 1)$ or $\one$ ($t=0$)  in $J^t_{r,s}/J^{t+1}_{r,s}$ and $v$ is a non-zero vector in $S(\la^L) \boxtimes S(\la^R)$.
Since $p_k(L_1,\ldots,L_{r+s})$ is central, it is enough to show that
\eqn
&& p_k(L_1,\ldots,L_r, L_{r+1},\ldots,L_{r+s})(\tau \otimes v) \\
&& \hskip7em = \Big( \sum_{i=1}^{r-t}\cont(\la^L,i)^k +  (-1)^{k+1} \sum_{i=1}^{s-t}(\cont(\la^R,i)+\delta)^k  \Big)  (\tau \otimes v),
\eneqn
 for all $k \ge 1$.
By Lemma \ref{lem:relations between tau and L_i} (1), we know that
\eqn
(L_{r-t+1}^k + \cdots +L_r^k +(-1)^{k+1}(L_{r+1}^k + \cdots + L_{r+t}^k))\tau_t=0.
\eneqn
 Hence it is enough to show that
\eqn
\Big(\sum_{j=1}^{r-t} L_j^k \Big) \big(\tau \otimes v \big)
=   \Big( \sum_{i=1}^{r-t} \cont(\la^L,i)^k \Big) \tau\otimes v \quad \text{and,} \\
\Big(\sum_{j=r+t+1}^{r+s} L_j^k \Big) \big(\tau \otimes v \big)
=\Big(\sum_{j=1}^{s-t}  (\cont(\la^R, j )+\delta)^k \big)\tau\otimes v.
\eneqn

Note that
$(L_1^k + \cdots +L_{r-t}^k)\tau=\tau (L_1^k + \cdots +L_{r-t}^k)$
and
$L_1^k + \cdots +L_{r-t}^k \in \C[\sym_{r-t}] \otimes \one \subset \C[\sym_{r-t}] \otimes \C[\sym_{s-t}]$.
Moreover, $L_1^k + \cdots +L_{r-t}^k$ corresponds to a symmetric polynomial in the Jucys-Murphy elements of
$\C[\sym_{r-t}]$ under the  isomorphism
 $\langle s_1,\ldots, s_{r-t-1} \rangle \simeq \C [\sym_{r-t}]$.  %of $B_{r,s}(\delta)$,
Thus we obtain
\eqn
&&(L_1^k + \cdots +L_{r-t}^k)\tau\otimes v
=\tau(L_1^k + \cdots +L_{r-t}^k)\otimes v \\
&&=\tau \otimes  (L_1^k + \cdots +L_{r-t}^k) v
=  \tau\otimes \big( \sum_{i=1}^{r-t} \cont(\la^L,i)^k \big)  v,
\eneqn
where the last equality follows from the fact that a symmetric polynomial in Jucys-Murphy elements of a symmetric group acts on a simple module $S(\mu)$ associated with a partition $\mu$ by the scalar multiplication, which is given by the evaluation of the polynomial at the contents of $\mu$ (\cite{Murphy81}). See also \cite[Theorem 1.1]{DG}.

When $1 \le i \le r-t$, and $r+t+1 \le j \le r+s$, we get $e_{i,j} \tau =0$ on $J_{r,s}^t/J^{t+1}_{r,s}$ because $e_{i,j}$ makes another horizontal strand.
Combining this and Lemma \ref{lem:relations between tau and L_i} (2), we have
\eqn
L_j \tau = \Big(\sum_{i=r+t+1}^{j-1} (i, j) + \delta \Big) \tau = \tau \Big(\sum_{i=r+t+1}^{j-1} (i, j) + \delta \Big) \ \quad  \  \text{for } r +t+1 \le j \le r+s
\eneqn
on $J_{r,s}^t/J^{t+1}_{r,s}$.
Note that we used the fact that $\tau$ commutes with $(p,q)$, if $p,q \ge r+t+1$.
By  repeating this procedure,  we obtain
%an induction on $k$,
\eqn
L_j^k \tau =  \tau \Big(\sum_{i=r+t+1}^{j-1} (i, j) + \delta \Big)^k \ \quad  \  \text{for } r +t+1 \le j \le r+s
\eneqn
on $J_{r,s}^t/J^{t+1}_{r,s}$.
Now we have
\eqn
\Big(\sum_{j=r+t+1}^{r+s} L_j^k \Big) \big(\tau \otimes v \big)
= \sum_{j=r+t+1}^{r+s}\tau\Big(\sum_{i=r+t+1}^{j-1} (i, j) + \delta \Big)^k \otimes v
%= \tau\Big(\sum_{j=r+t+1}^{r+s}\big(\sum_{i=r+t+1}^{j-1} (i, j) + \delta \Big)^k \Big) \otimes v
= \tau \otimes \Big( \sum_{j=r+t+1}^{r+s}\Big( \sum_{i=r+t+1}^{j-1} (i, j) + \delta \Big)^k \Big) v,
%&&= \tau \otimes \sum_{j=r+t+1}^{r+s} \Big(\sum_{i=r+t+1}^{j-1} (i, j) + \delta \Big)^k v.
\eneqn
because $\sum_{i=r+t+1}^{j-1} (i, j) + \delta
\in \one \otimes \C[\sym_{s-t}] \subset \C[\sym_{r-t}] \otimes \C[\sym_{s-t}]$.
Since the element % the image of
$$\sum_{j=r+t+1}^{r+s}\Big(\sum_{i=r+t+1}^{j-1} (i, j) + \delta\Big)^k$$
 is a symmetric polynomial
in the Jucys-Murphy elements of
the subalgebra $\one \otimes \C [\sym_{s-t}]$  under the isomorphism $\langle s_{r+t+1},\ldots, s_{r+s-1} \rangle \simeq \C [\sym_{s-t}],$
it follows again by \cite{Murphy81} (see also \cite[Theorem 1.1]{DG}) that % is well-known that
\eqn
\Big( \sum_{j=r+t+1}^{r+s}\Big( \sum_{i=r+t+1}^{j-1} (i, j) + \delta \Big)^k \Big) v
= \Big( \sum_{j=1}^{s-t}  (\cont(\la^R,  j  )+\delta)^k\Big)  v.
\eneqn
Therefore, we obtain the desired assertion.
\QED

\begin{corollary} \label{cor:constant}
Let $f$ be a supersymmetric polynomial in $S_{r,s}[x;y]$.
Then we have
\eqn
f(L_1,\ldots,L_r,L_{r+1},\ldots,L_{r+s}) = f(c(\la,1),\ldots,c(\la,r),c(\la,r+1), \ldots, c(\la ,r+s))
\eneqn
on $C_{r,s}(\la)$ for every $\la \in \La_{r,s}$.
\end{corollary}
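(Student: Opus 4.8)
The plan is to reduce the statement to Proposition~\ref{prop:action on a cell module} by expanding $f$ in terms of the power sum supersymmetric polynomials. First I would observe that, by Corollary~\ref{cor:centerssym}, the element $f(L_1,\ldots,L_{r+s})$ lies in $Z(B_{r,s}(\delta))$, so by Lemma~\ref{lem:one dimensional} it acts on the cell module $C_{r,s}(\la)$ by a scalar; it therefore suffices to identify that scalar with $f(c(\la,1),\ldots,c(\la,r+s))$. By Stembridge's theorem (\cite{Stem}) the algebra $S_{r,s}[x;y]$ is generated over $\C$ by $\set{p_k}{k\ge 0}$, and since $p_0 = r-s$ is a scalar we may write $f = P(p_1,\ldots,p_m)$ for some $m\ge 1$ and some $P\in\C[z_1,\ldots,z_m]$, where $p_k = p_k(x_1,\ldots,x_r,y_1,\ldots,y_s)$.

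Because the elements $L_k$ commute with one another (Proposition~\ref{prop:Li are commuting}), the assignment $x_i\mapsto L_i$ $(1\le i\le r)$, $y_j\mapsto L_{r+j}$ $(1\le j\le s)$ extends to a $\C$-algebra homomorphism $\C[x_1,\ldots,x_r,y_1,\ldots,y_s]\to B_{r,s}(\delta)$, and $f$ maps to $f(L_1,\ldots,L_{r+s})$ while $p_k$ maps to $p_k(L_1,\ldots,L_{r+s})$; hence $f(L_1,\ldots,L_{r+s}) = P\bigl(p_1(L_1,\ldots,L_{r+s}),\ldots,p_m(L_1,\ldots,L_{r+s})\bigr)$. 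Applying the same argument to the evaluation homomorphism at the point $(c(\la,1),\ldots,c(\la,r+s))\in\C^{r+s}$ gives $f(c(\la,1),\ldots,c(\la,r+s)) = P\bigl(p_1(c(\la,1),\ldots,c(\la,r+s)),\ldots,p_m(c(\la,1),\ldots,c(\la,r+s))\bigr)$.

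Now fix $\la\in\La_{r,s}^t$ and set $\beta_k := \sum_{i=1}^{r-t}\cont(\la^L,i)^k + (-1)^{k+1}\sum_{i=1}^{s-t}(\cont(\la^R,i)+\delta)^k$. By Proposition~\ref{prop:action on a cell module}, the operator $p_k(L_1,\ldots,L_{r+s})$ acts on $C_{r,s}(\la)$ as the scalar $\beta_k$ for every $k\ge 1$. On the other hand I would check directly from the definition of $c(\la,i)$ that, for every $k\ge 1$,
\[
p_k(c(\la,1),\ldots,c(\la,r+s)) = \sum_{i=1}^{r} c(\la,i)^k + (-1)^{k+1}\sum_{j=1}^{s} c(\la,r+j)^k = \beta_k ,
\]
the point being that the $2t$ coordinates $c(\la,r-t+1),\ldots,c(\la,r+t)$ all vanish and so contribute nothing (this is where $k\ge 1$ is used), while the surviving terms are exactly $\cont(\la^L,i)^k$ for $1\le i\le r-t$ and $(\cont(\la^R,j)+\delta)^k$ for $1\le j\le s-t$, after the index shift $i\mapsto i-r-t$ in the $\la^R$-block. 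Since commuting operators on the $\C$-vector space $C_{r,s}(\la)$ that act as the scalars $\beta_1,\ldots,\beta_m$ are sent by $P$ to the scalar $P(\beta_1,\ldots,\beta_m)$, it follows that $f(L_1,\ldots,L_{r+s})$ acts on $C_{r,s}(\la)$ by $P(\beta_1,\ldots,\beta_m) = f(c(\la,1),\ldots,c(\la,r+s))$, which is the assertion.

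Once Proposition~\ref{prop:action on a cell module} and Stembridge's theorem are in hand there is no real difficulty left; the only point requiring attention is the bookkeeping in the displayed identity $p_k(c(\la,1),\ldots,c(\la,r+s))=\beta_k$, namely lining up the range $r-t+1\le i\le r+t$ of vanishing coordinates with the reindexing of the $\la^R$-contents, together with the trivial observation that the case $k=0$ needs no separate treatment since $p_0$ is the constant $r-s$ on both sides.
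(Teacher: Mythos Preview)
Your proof is correct and follows exactly the approach indicated in the paper, which simply states that the corollary ``follows immediately from the above proposition and the fact that $S_{r,s}[x;y]$ is generated by the power sum supersymmetric polynomials.'' You have carefully unpacked that one-line argument, including the bookkeeping verification that $p_k(c(\la,1),\ldots,c(\la,r+s))=\beta_k$ via the vanishing of the middle $2t$ coordinates, which is precisely the implicit content of the paper's proof.
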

\Proof
It follows immediately from the above proposition and the fact that $S_{r,s}[x;y]$ is generated by the power sum supersymmetric polynomials (\cite{Stem}).
\QED

The proof of the following lemma is  identical to the argument  in  \cite[Theorem 3.3]{Li}.
We reproduce it here in a slightly more general setting for reader's  convenience.
\begin{lemma} \rm{(}\cite[Theorem 3.3]{Li}\rm{)} \label{prop:YanboLi}
Let $S$ be a $\C$-subalgebra of the polynomial ring $\C[X_1,\ldots,$ $X_m]$ and
  let
  \eq \label{eq:sequences}
  (k_{11},\ldots,k_{1m}), \ldots,  (k_{n1},\ldots, k_{nm})
  \eneq
 be  $n$ sequences of elements in $\C$ for some positive integer $n$.
Assume that
\eq \label{eq:assump} &&\text{for each $1 \le i  \neq j \le n$, there exists an element $p$ in $S$}  \\
&&\text{\qquad such that $p(i) \neq p(j)$, \nonumber
}\eneq
where $p(i)$ denotes the value $p(k_{i1},\ldots,k_{im})$.

Then there exists a family of elements
$p_1,\ldots, p_n$ in $S$ such that
  \eqn
  \begin{array}{|cccc|}
    p_1(1) & p_1(2) & \cdots & p_1(n) \\
    p_2(1) & p_2(2) & \cdots & p_2(n) \\
    \cdots & \cdots & \cdots & \cdots \\
    p_n(1) & p_n(2) & \cdots & p_n(n)
  \end{array}
  \neq  0.
  \eneqn
\end{lemma}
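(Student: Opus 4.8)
The goal is to produce, from the separating hypothesis \eqref{eq:assump}, a family $p_1,\dots,p_n\in S$ whose "evaluation matrix" $\bigl(p_a(b)\bigr)_{1\le a,b\le n}$ is nonsingular. The plan is to build the family $p_1,\dots,p_n$ one at a time, by induction on $n$, in such a way that at stage $m\le n$ the $m\times m$ submatrix $\bigl(p_a(b)\bigr)_{1\le a,b\le m}$ already has nonzero determinant. The base case $m=1$ is trivial: take $p_1$ to be any nonconstant separating element for the pair $(1,2)$ (or simply $p_1=1$ if $n=1$), so that $p_1(1)\ne 0$ after adding a scalar if necessary; in fact it is cleaner to arrange $p_1(1)\neq 0$ directly.

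\medskip

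\textbf{The inductive step.} Suppose $p_1,\dots,p_{m}$ have been constructed with
$$\det\bigl(p_a(b)\bigr)_{1\le a,b\le m}\neq 0,$$
and we wish to find $p_{m+1}\in S$ making the $(m+1)\times(m+1)$ determinant nonzero. Consider the function on the index set
$$
b\ \longmapsto\ \det
\begin{pmatrix}
 p_1(1) & \cdots & p_1(m) & p_1(b)\\
 \vdots & & \vdots & \vdots\\
 p_m(1) & \cdots & p_m(m) & p_m(b)\\
 q(1) & \cdots & q(m) & q(b)
\end{pmatrix}
$$
for a variable choice $q\in S$ in the last row. Expanding along the last row, this determinant equals $\sum_{c=1}^{m} (\text{cofactor}_c)\, q(c) + M\, q(b)$, where $M=\det\bigl(p_a(b)\bigr)_{1\le a,b\le m}\neq 0$ is the fixed $m\times m$ minor and the cofactors do not depend on $b$ or on $q$. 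Setting $b=m+1$, we must choose $q\in S$ so that the resulting scalar is nonzero. Equivalently, writing the quantity as a linear combination of the values $q(1),\dots,q(m),q(m+1)$ with fixed coefficients, and with the coefficient of $q(m+1)$ equal to $M\neq 0$, we need an element of $S$ that is not killed by this particular nonzero linear functional on the space of evaluation vectors $\bigl(q(1),\dots,q(m+1)\bigr)$.

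\medskip

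\textbf{Where the hypothesis enters.} The point is that \eqref{eq:assump} forces the evaluation map $S \to \C^{n}$, $q\mapsto (q(1),\dots,q(n))$, to have image large enough. Concretely, restrict attention to the coordinates $1,\dots,m+1$: I claim the image of $S$ under $q\mapsto(q(1),\dots,q(m+1))$ is not contained in the hyperplane where our linear functional (with last coefficient $M\ne 0$) vanishes. Indeed, suppose for contradiction that every $q\in S$ satisfies $\sum_{c=1}^{m}\gamma_c\,q(c) + M\,q(m+1)=0$ for the fixed scalars $\gamma_c$; since $M\neq 0$ this expresses $q(m+1)$ as a fixed linear combination of $q(1),\dots,q(m)$, valid for all $q\in S$. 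Then in particular $q(m+1)$ is determined by $q(1),\dots,q(m)$; but taking $i=m+1$ and running $j$ over $1,\dots,m$, hypothesis \eqref{eq:assump} already tells us there is a separating $p$ for the pair $(m+1,j)$ — and iterating/combining these (using that $S$ is an algebra, so products and linear combinations of separating elements stay in $S$) one can produce a $q\in S$ with $q(m+1)$ differing from any prescribed affine function of $q(1),\dots,q(m)$. The cleanest way to see this is: the set of $q\in S$ for which the bad linear relation holds is a subalgebra-stable linear subspace of $S$, and it contains $1$; if it were all of $S$ we would get that the evaluation at $m+1$ is an affine combination of the evaluations at $1,\dots,m$ on all of $S$, contradicting that $S$ separates $m+1$ from each of $1,\dots,m$ (a one-variable argument: if $q_j\in S$ separates $m+1$ from $j$, then a suitable polynomial in the $q_j$'s separates $m+1$ from the whole affine-span relation). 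Choosing such a $q$ as $p_{m+1}$ completes the induction.

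\medskip

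\textbf{Main obstacle.} The routine part is the determinant cofactor bookkeeping; the genuine content — and the step I expect to require the most care — is the last one: deducing from the \emph{pairwise} separation hypothesis \eqref{eq:assump} that no \emph{fixed linear relation with nonzero last coefficient} can hold identically on $S$ across the points $1,\dots,m+1$. One must use that $S$ is closed under multiplication (not merely a linear subspace): from separating elements $q_1,\dots,q_m$ one forms, e.g., products $\prod_{j}(q_j - q_j(m+1))$ or Lagrange-interpolation-type combinations to manufacture an element of $S$ whose evaluation vector escapes the offending hyperplane. This is exactly Li's argument from \cite[Theorem 3.3]{Li}, which we are reproducing in this slightly more general setting, so the mechanics are known; the only thing to check is that replacing "symmetric polynomials in Jucys--Murphy elements" by an arbitrary separating subalgebra $S\subseteq\C[X_1,\dots,X_m]$ does not break any step — and it does not, since Li's proof only ever uses that $S$ is a subalgebra and that it separates the relevant points.
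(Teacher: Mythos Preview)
Your framework matches the paper's: induct on $n$, and at the inductive step reduce (via cofactor expansion) to showing that a relation $q(m+1)=\sum_{c\le m}\alpha_c\,q(c)$ cannot hold identically on $S$. But you do not actually prove this last step, and the devices you suggest do not work as stated. The product $R=\prod_{j\le m}\bigl(q_j-q_j(m+1)\bigr)$ does vanish at $m+1$, but you have no control over $R(c)$ for $c\le m$: any factor $q_{j}-q_j(m+1)$ with $j\ne c$ may well vanish at $c$, so substituting $R$ into the relation can simply yield $0=0$. Your claim that the solution set of the bad linear relation is a ``subalgebra-stable linear subspace'' is false in any useful sense---a linear relation among point-values is \emph{not} preserved under multiplication---and that is precisely why exploiting the algebra structure here is a genuine step rather than an automatic one. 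Deferring to ``Li's mechanics'' does not fill the gap, since those mechanics are exactly what is missing from your write-up.

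The paper closes the gap with a concrete trick you have not reproduced. After obtaining $p_1,\dots,p_{n-1}$ with nonsingular $(n-1)\times(n-1)$ evaluation matrix, it row-reduces so that $p_i(j)=0$ for $j<i$ and $p_i(i)=1$. Assuming $p(n)=\sum_{j<n}k_j\,p(j)$ for all $p\in S$, one then \emph{replaces $p$ by $p_{n-1}\cdot p$}: since $p_{n-1}(j)=0$ for $j<n-1$ and $p_{n-1}(n-1)=1$, the relation collapses to $p_{n-1}(n)\,p(n)=k_{n-1}\,p(n-1)$ for all $p$. If $p_{n-1}(n)\ne 0$, taking $p=1$ forces $p(n)=p(n-1)$ for all $p$, contradicting separation of the points $n-1$ and $n$; hence $p_{n-1}(n)=0$, and then $p=1$ again gives $k_{n-1}=0$. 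Repeating with $p_{n-2},p_{n-3},\dots$ kills all the $k_j$, leaving $p(n)=0$ for all $p$, which is absurd. This successive peeling-off, via multiplication by the triangularised $p_i$, is the missing mechanism in your sketch. (An alternative one-line route you also did not take: the image of $S$ under $q\mapsto(q(1),\dots,q(n))$ is a unital subalgebra of $\C^n$ separating coordinates, hence a reduced finite-dimensional commutative $\C$-algebra with $n$ distinct characters, hence all of $\C^n$; then choose preimages of the standard basis.)
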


\begin{proof}
  We will proceed by induction on $n$.
Let $(k_{1},k_{2},\ldots,k_{m})$ be a sequence of elements in $\C$.
There exists an element $p$ such that $p(k_{1},k_{2},\ldots,k_{m}) \neq 0$.
For example, we can take  a nonzero constant polynomial as $p$.

Now assume that $n >1$ and that the assertion holds for all $1 \le i \le n-1$.
 Consider the first  $n-1$ sequences
  \eqn
  (k_{11},\ldots,k_{1m}), \ldots,  (k_{n-1 1},\ldots, k_{n-1 m})
  \eneqn
of \eqref{eq:sequences}.
Then,  for $1\le i \neq  j \le n-1$ there exists a polynomial $p \in
S$ such that
$p(i) \neq p(j)$, by the assumption \eqref{eq:assump}.
Now, by the induction hypothesis,  we assume that there exists a family of  polynomials
$p_1,\ldots, p_{n-1}$ in $S$ such that
  \eq \label{eq:upper}
  \begin{array}{|cccc|}
    p_1(1) & p_1(2) & \cdots & p_1(n-1) \\
    p_2(1) & p_2(2) & \cdots & p_2(n-1) \\
    \cdots & \cdots & \cdots & \cdots \\
    p_{n-1}(1) & p_{n-1}(2) & \cdots & p_{n-1}(n-1)
  \end{array}
  \neq  0.
  \eneq
%By Lemma \ref{lemma:Gaussian},
Applying elementary row operations to the above matrix,
we may further assume that
\eq \label{eq:pij=0}
p_i (j) =0\ (1 \le j < i \le n-1), \quad \text{and}   \quad p_i(i)=1  \ (1 \le i \le n-1)
\eneq
(see, \cite[Lemma 3.5]{Li}). In particular, the determinant of the above matrix is $1$.

Suppose that
for every element $p \in S$ we have
  \eqn
  d(p):=\begin{array}{|ccccc|}
    p_1(1) & p_1(2) & \cdots & p_1(n-1) & p_1(n)\\
    p_2(1) & p_2(2) & \cdots & p_2(n-1) & p_2(n)\\
    \cdots & \cdots & \cdots & \cdots &\cdots  \\
    p_{n-1}(1) & p_{n-1}(2) & \cdots & p_{n-1}(n-1) &p_{n-1}(n) \\
    p(1) & p(2) & \cdots & p(n-1) & p(n)
  \end{array}
  =0.
  \eneqn
Then,
by \eqref{eq:upper}, \eqref{eq:pij=0} and the determinant expansion by minors,
we have
\eqn
p(n)=k_1p(1)+k_2p(2)+\cdots+k_{n-1}p(n-1),
\eneqn
for some $k_j \in \C$, which is independent of $p$  for $1\le j \le n-1$.
Note that $p_{n-1}p$  is also in $S$ so that  we have
\eqn
p_{n-1}p(n)=k_1p_{n-1}p(1)+k_2p_{n-1}p(2)+\cdots+k_{n-1}p_{n-1}p(n-1)
=k_{n-1}p(n-1),
\eneqn
since $p_{n-1}(j)=0$ for $1 \le j \le n-2$ and $p_{n-1}(n-1)=1$.

Assume that  $p_{n-1}(n)\neq 0$.
Then we have
$$p(n)=\dfrac{k_{n-1}}{p_{n-1}(n)}p(n-1),
$$
for all polynomial $p \in S$.
Taking a nonzero constant  polynomial $p \in S$, we have
$\dfrac{k_{n-1}}{p_{n-1}(n)}=1$ so that
$p(n)=p(n-1)$
for all  polynomials $p$ in $S$.
It is a contradiction to the assumption \eqref{eq:assump} for $i=n-1, j=n$.

Thus we have $p_{n-1}(n)=0$.
It implies that
$k_{n-1}p(n-1)=0$. Taking a nonzero constant polynomial $p$, we have
$k_{n-1}=0$.
Repeating this process similarly with $p_j$ instead of $p_{n-1}$, we have $k_j=0$ for $j=1,\ldots, n-1$ and hence
$p(n)=0$. But it is impossible, since $p$ is arbitrary.
Thus we conclude that there exists an element  $p$ in $S$ such that $d(p) \neq 0$.
\end{proof}

Now we present our main theorem.

\begin{theorem} \label{thm:main}
If the walled Brauer algebras $B_{r,s}(\delta)$ is semisimple, then
  the supersymmetric polynomials in $L_1,\ldots,L_{r+s}$ generate the center of $B_{r,s}(\delta)$.
\end{theorem}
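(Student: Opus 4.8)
The plan is to exploit semisimplicity to reduce the statement to a linear-algebra fact about evaluating supersymmetric polynomials at the content sequences $c(\la,\bullet)$, and then to invoke Lemma~\ref{lem:separation} together with Lemma~\ref{prop:YanboLi}. First I would record the structure of the center. Since $B_{r,s}(\delta)$ is semisimple and the cell modules are indecomposable with $\End_{B_{r,s}(\delta)}(C_{r,s}(\la))\simeq\C$ by Lemma~\ref{lem:one dimensional}, each $C_{r,s}(\la)$ is simple, so $\{C_{r,s}(\la)=D_{r,s}(\la)\mid\la\in\dot{\Lambda}_{r,s}\}$ is a complete set of simple modules and $B_{r,s}(\delta)\simeq\bigoplus_{\la\in\dot{\Lambda}_{r,s}}\End_\C(C_{r,s}(\la))$. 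Consequently the map
\[
\omega\col Z(B_{r,s}(\delta))\longrightarrow\prod_{\la\in\dot{\Lambda}_{r,s}}\C,\qquad z\longmapsto\bigl(\omega_\la(z)\bigr)_{\la},
\]
sending a central element $z$ to the tuple of scalars $\omega_\la(z)$ by which it acts on $C_{r,s}(\la)$, is an algebra isomorphism; in particular $\dim_\C Z(B_{r,s}(\delta))=n:=|\dot{\Lambda}_{r,s}|$.

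Next I would describe where the supersymmetric polynomials land under $\omega$. By Corollary~\ref{cor:centerssym}, $p(L_1,\dots,L_{r+s})$ is central for every $p\in S_{r,s}[x;y]$, and by Corollary~\ref{cor:constant} its image under $\omega$ is the tuple $\bigl(p(c(\la,1),\dots,c(\la,r+s))\bigr)_{\la\in\dot{\Lambda}_{r,s}}$. Writing $Z'$ for the subalgebra of $Z(B_{r,s}(\delta))$ spanned by the elements $p(L_1,\dots,L_{r+s})$ with $p$ supersymmetric (it is a subalgebra because the $L_k$ commute, by Proposition~\ref{prop:Li are commuting}, so $p\mapsto p(L_1,\dots,L_{r+s})$ is an algebra map), it therefore suffices to prove that $\omega(Z')$ is all of $\prod_{\la}\C$; equivalently, that the evaluation map $S_{r,s}[x;y]\to\C^{n}$, $p\mapsto\bigl(p(c(\la,\bullet))\bigr)_{\la}$, is surjective.

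Finally I would apply Lemma~\ref{prop:YanboLi} to the polynomial ring $\C[x_1,\dots,x_r,y_1,\dots,y_s]$, the subalgebra $S=S_{r,s}[x;y]$, and the $n$ sequences $(c(\la,1),\dots,c(\la,r+s))$ indexed by $\la\in\dot{\Lambda}_{r,s}$. Its hypothesis~\eqref{eq:assump} is exactly Lemma~\ref{lem:separation} (distinct weights are separated by some supersymmetric polynomial, a fortiori on $\dot{\Lambda}_{r,s}\subseteq\Lambda_{r,s}$). The conclusion yields $p_1,\dots,p_n\in S_{r,s}[x;y]$ whose evaluation matrix $\bigl(p_i(c(\la,\bullet))\bigr)_{1\le i\le n,\ \la\in\dot{\Lambda}_{r,s}}$ is invertible; equivalently $\omega$ sends the central elements $p_1(L_\bullet),\dots,p_n(L_\bullet)$ to a basis of $\prod_\la\C\simeq\C^{n}$, so these $n$ elements are themselves a basis of the $n$-dimensional space $Z(B_{r,s}(\delta))$. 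Hence $Z'=Z(B_{r,s}(\delta))$, which is the assertion.

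The main obstacle has, in effect, already been dealt with: the substantive inputs are Corollary~\ref{cor:constant} (the eigenvalue computation, which reduces via Lemma~\ref{lem:relations between tau and L_i} to Jucys--Murphy elements of symmetric groups), Lemma~\ref{lem:separation} (the case-by-case verification, organized along Proposition~\ref{prop:semisimplicity}, that distinct weights have supersymmetrically separated content data), and Lemma~\ref{prop:YanboLi}. Granted these, the argument above is formal; the only points that need care are the identification $\dim_\C Z(B_{r,s}(\delta))=n$ via the Artin--Wedderburn decomposition, and the observation that surjectivity of the evaluation map together with $\dim Z=n$ forces $Z'=Z$ — for which knowing the exact number of simple modules is essential.
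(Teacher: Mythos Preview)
Your proof is correct and follows essentially the same route as the paper: both invoke Lemma~\ref{lem:separation} to verify the hypothesis of Lemma~\ref{prop:YanboLi}, obtain supersymmetric polynomials whose evaluation matrix at the content sequences is invertible, and conclude by counting that the resulting central elements form a basis of $Z(B_{r,s}(\delta))$ since its dimension equals $|\dot{\Lambda}_{r,s}|$ by semisimplicity. The only difference is presentational --- you package the argument via the central-character isomorphism $\omega$, while the paper checks linear independence directly by acting on cell modules via Corollary~\ref{cor:constant} --- but these are the same computation.
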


\Proof
 By Lemma \ref{lem:separation}, we can apply the Proposition \ref{prop:YanboLi} to the case $S=S_{r,s}[x;y]$ with the sequences
$$\set{(c(\la,i))_{1\le i \le r+s}}{\la \in \dot{\Lambda}_{r,s}}.$$
Thus we obtain a set of supersymmetric polynomials
$\set{p_\la \in S_{r,s}[x;y]}{\la \in \dot{\Lambda}_{r,s}}$ such that the matrix
 \eqn
 \big( (p_\la(\mu))\big)_{\la, \mu \in \dot{\Lambda}_{r,s}}
  \eneqn
is nonsingular, where $p_\la(\mu):=p_\la(c(\mu,1),\ldots, c(\mu,r+s))$.

Assume that there is a family of complex numbers $\set{a_\la \in \C }{\la \in \dot{\Lambda}_{r,s}}$ such that
$$\sum_{\la} a_\la p_\la(L_1,\ldots,L_{r+s})=0.$$
By Corollary \ref{cor:constant}, we have
$$\sum_{\la} a_\la p_\la(\mu)=0 \quad \text{for all } \ \mu \in \dot{\Lambda}_{r,s}.$$
Hence $a_\la=0$ for all $\la \in \dot{\Lambda}_{r,s}$ so that
$$p_\la(L_1,\ldots,L_{r+s}) \quad (\la \in \dot{\Lambda}_{r,s})$$
are linearly independent.

On the other hand, if $B_{r,s}(\delta)$ is semisimple, the dimension of the center is the same as the number of the isomorphism classes of  simple modules, and hence it is identical to the cardinality of $\dot{\Lambda}_{r,s}$.
Thus we conclude that
$\set{p_\la(L_1,\ldots,L_{r+s})}{\la \in \dot{\Lambda}_{r,s}}$ is a basis of the center, as desired.
\QED

\vskip1em
%%%%%%%%%%%%%%%%%%%%%%%%%%%%%%%%%%%%%%%%%%%%%%%%%%%%%%%%%%%%%%%%%%%%%%%%%%%%%%%%%%%%%%%%%%%%%%%%%%%%%%%%%%%%%%%%%%%%
\section{Gelfand-Zetlin subalgebras}
In this section, we assume that  $B_{r,s}(\delta)$  is semisimple.
For the sake of simplicity, we assume further that $r \ge s$.
For $1 \le a \le r+s$, let $B_a$ be the subalgebra of $B_{r,s}(\delta)$
generated by
\eqn
B_a=
\begin{cases}
\langle s_1,\ldots, s_{a-1}  \rangle & \text{if} \ 1 \le a \le r, \\
\langle s_1,\ldots, s_{r-1}, e_{r,r+1}, s_{r+1} \ldots s_{a-1} \rangle  & \text{if} \ r+1 \le a \le r+s.
\end{cases}
\eneqn
Set $B_0:=\C$. Then
we have  a tower of subalgebras
\eqn
 \C = B_0 \subset B_1 \subset \cdots \subset B_{r+s-1} \subset B_{r+s} =B_{r,s}(\delta).
\eneqn
This tower of subalgebras are compatible with the Jucys-Murphy elements in the following sense:
for each $1 \le a \le r+s$, $L_1,\ldots, L_a$ are contained in $B_a$  and they are exactly the Jucys-Murphy elements of $B_a$ under the isomorphism
\eqn
B_a \simeq
\begin{cases}
 B_{a,0}(\delta)& \text{if} \ 1 \le a \le r, \\
 B_{r,a-r}(\delta) & \text{if} \ r+1 \le a \le r+s.
\end{cases}
\eneqn
The above isomorphisms are obtained by checking that the generators of $B_a$ produce
all the $(r,s)$-walled Brauer diagrams with $r+s-a$ vertical strands
connecting the $k$-th vertex on the top row with the $k$-th vertex on the bottom row for each $a+1 \le k \le r+s$.
In particular,
$B_a$ is semisimple for all $0 \le a \le r+s$.
Set
\eqn
\Lambda_a:=
\begin{cases}
\{ \emptyset \} &  \text{if} \ a=0,  \\
\Lambda_{a,0} & \text{if} \ 1 \le a \le r, \\
\Lambda_{r,a-r} & \text{if} \ r+1 \le a \le r+s.
\end{cases}
\eneqn
 For $\lambda \in \Lambda_a$, we define
\eqn
C_a(\lambda):=
\begin{cases}
 \C  &  \text{if} \ a=0,  \\
C_{a,0}(\lambda) & \text{if} \ 1 \le a \le r, \\
C_{r,a-r}(\lambda) & \text{if} \ r+1 \le a \le r+s.
\end{cases}
\eneqn
For $1 \le a\le r$ and $\la \in \Lambda_{a}$, we have
\eq \label{eq:rest1}
\Res^{B_{a}}_{B_{a-1}}  C_{a}(\la)  \simeq \bigoplus_\mu  C_{a-1}(\mu),
\eneq
%\bebl COMMENT:IS $\simeq$ RIGHT? WHICH ONE IS RIGHT, $\simeq$ OR $=$?  \ebl
where the sum runs over the weights $\mu \in \Lambda_{a-1}$ such that
 the skew Young diagram $[\la^L] / [\mu^L]$ consists of a single box.
 This is nothing but the branching rule for symmetric groups.
For $r+1 \le a\le r+s$ and $\la \in \Lambda_a$, it is shown in \cite[Theorem 3.16]{Halverson} that
\eq\label{eq:rest2}
\Res^{B_{a}}_{B_{a-1}}  C_{a}(\la) \simeq \bigoplus_\mu C_{a-1}(\mu),
\eneq
%\bebl COMMENT:IS $\simeq$ RIGHT? WHICH ONE IS RIGHT, $\simeq$ OR $=$?  \ebl
where the sum runs over the weights $\mu \in \Lambda_{a-1}$ such that
either the  diagram $[\mu^L] / [\la^L]$ consists of a single box, or the diagram $[\la^R] / [\mu^R]$ consists of a single box (see also \cite[Corollary 3.6]{CDDM}).
In particular, for $1 \le a \le r+s$, the restriction of any simple module of $B_{a}$ to $B_{a-1}$ is multiplicity-free so that the above decompositions are canonical.
From now on, we identify $C_{a-1}(\mu)$ in the right hand side  in \eqref{eq:rest1}, \eqref{eq:rest2} with the unique  simple submodule of $C_a(\la)$ which is isomorphic to it.

Let $\B$ be the \emph{branching graph}  %(also called \emph{Bratteli diagram})
 of $B_{r,s}(\delta)$:
 the set of vertices is given by
$\bigsqcup_{a=0}^{r+s} \Lambda_a$  and the two vertices $\mu \in \Lambda_a$ and $\la \in \Lambda_{a+1}$ are joined by an arrow from $\mu$ to $\la$ if and only if
$\Hom_{B_{a}}( C_a(\mu) , \Res_{B_a}^{B_{a+1}}  C_{a+1}(\la) ) \neq 0$.
Iterating the restrictions, we obtain a canonical decomposition of a simple $B_{r,s}$-module $ C_{r,s}(\lambda) $ into a direct sum of simple $B_0$-module, i.e., $1$-dimensional subspaces
\eqn
C_{r,s}(\la)  =\bigoplus_{T} V_T,
\eneqn
where the sum runs over all the paths $T$ in $\B$ from $\emptyset$ to $\la$.
Taking a non-zero vector $v_T$ for each path $T$ in $\B$ from $\emptyset$ to $\la$, we obtain a basis
$\set{v_T}{\text{ $T$ : a path in $\B$ from $\emptyset$ to $\la$}}$ of $C_{r,s}(\lambda) $, called
the \emph{Gelfand-Zetlin basis} (shortly, GZ-basis) of $C_{r,s}(\lambda)$.

Let $A_{r,s}$ be the subalgebra of $B_{r,s}(\delta)$ generated by $Z(B_1),\ldots,Z(B_{r+s})$, where $Z(B_a)$ denotes the center of $B_a$.
We call $A_{r,s}$ the \emph{Gelfand-Zetlin subalgebra} (shortly, the GZ-subalgebra) of $B_{r,s}(\delta)$.
Note that $A_{r,s}$  is commutative.

\begin{prop} \label{prop:generate}
The GZ-subalgebra $A_{r,s}$ is generated by $L_1, \ldots, L_{r+s}$.
\end{prop}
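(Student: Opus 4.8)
The plan is to prove the two inclusions
$$\langle L_1,\ldots,L_{r+s}\rangle\ \subseteq\ A_{r,s}\qquad\text{and}\qquad A_{r,s}\ \subseteq\ \langle L_1,\ldots,L_{r+s}\rangle,$$
which together give the proposition. Both directions rest on the compatibility of the tower $\C=B_0\subset B_1\subset\cdots\subset B_{r+s}=B_{r,s}(\delta)$ with the Jucys--Murphy elements that was recalled just above the statement: each $B_a$ is semisimple, is isomorphic to $B_{a,0}(\delta)$ if $a\le r$ and to $B_{r,a-r}(\delta)$ if $a>r$, and under this isomorphism $L_1,\ldots,L_a$ are exactly the Jucys--Murphy elements of $B_a$. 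This lets us apply the results of Sections 2 and 3 verbatim inside every $B_a$.

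For the inclusion $A_{r,s}\subseteq\langle L_1,\ldots,L_{r+s}\rangle$ I would apply Theorem~\ref{thm:main} to each $B_a$ in place of $B_{r,s}(\delta)$: since $B_a$ is a semisimple walled Brauer algebra whose Jucys--Murphy elements are $L_1,\ldots,L_a$, its center $Z(B_a)$ is generated by the supersymmetric polynomials in $L_1,\ldots,L_a$ (for $a\le r$ the set of supersymmetric polynomials degenerates to the symmetric polynomials in $L_1,\ldots,L_a$, since there are no $y$-variables). In particular $Z(B_a)\subseteq\langle L_1,\ldots,L_a\rangle\subseteq\langle L_1,\ldots,L_{r+s}\rangle$ for every $a$. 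As $A_{r,s}$ is by definition generated by $Z(B_1),\ldots,Z(B_{r+s})$, the inclusion follows immediately.

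For the reverse inclusion it suffices to show that $L_k\in A_{r,s}$ for every $1\le k\le r+s$, and the key point is that the first power-sum supersymmetric polynomial is central in each $B_a$: by Proposition~\ref{prop:belongtocenter} applied inside $B_k$ (the exponent-$1$ case, where $p_1(x_1,\ldots,x_k)=x_1+\cdots+x_k$ irrespective of the position of the wall), the element $z_k:=L_1+\cdots+L_k$ lies in $Z(B_k)$. Putting $z_0:=0\in Z(B_0)=\C$ and recalling $L_1=0$, the telescoping identity $L_k=z_k-z_{k-1}$ exhibits $L_k$ as the difference of an element of $Z(B_k)$ and an element of $Z(B_{k-1})$, hence $L_k\in A_{r,s}$. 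Therefore $\langle L_1,\ldots,L_{r+s}\rangle\subseteq A_{r,s}$, and combining the two inclusions proves the proposition.

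I do not expect a genuine obstacle here: once the dictionary of the previous paragraph is set up, the argument is a formal telescoping together with one application of Theorem~\ref{thm:main}. The only point requiring real care is verifying that the hypotheses of Theorem~\ref{thm:main} and Proposition~\ref{prop:belongtocenter} apply to every $B_a$, i.e.\ that each $B_a$ is semisimple and that $L_1,\ldots,L_a$ really are its Jucys--Murphy elements; both facts are exactly what is recalled in the discussion preceding the proposition, so this causes no difficulty.
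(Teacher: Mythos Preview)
Your proof is correct and follows essentially the same approach as the paper: one inclusion uses the telescoping identity $L_k=(L_1+\cdots+L_k)-(L_1+\cdots+L_{k-1})$ together with Proposition~\ref{prop:belongtocenter} applied in each $B_k$, and the other inclusion applies Theorem~\ref{thm:main} to each semisimple $B_a$ to get $Z(B_a)\subseteq\langle L_1,\ldots,L_a\rangle$. The paper's proof is terser but identical in substance.
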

\Proof
By Proposition \ref{prop:belongtocenter}, we have  $L_1 +\cdots +L_{a-1} \in Z(B_{a-1})$ and $L_1+ \cdots +L_{a-1}+L_a \in Z(B_a)$. It follows that $L_a \in A_{r,s}$ for $1 \le a \le r+s$.
On the other hand, by Theorem \ref{thm:main}, each $Z(B_a)$ is contained in the subalgebra of $B_a$ generated by $L_1,\ldots,L_a$, and hence $A_{r,s}$ is generated by $L_1, \ldots, L_{r+s}$.
\QED

By Wedderbun-Artin theorem, we have a $\C$-algebra isomorphism
\eq \label{eq:matrix form of the walled Brauer algebra}
B_{r,s}(\delta) \simeq \bigoplus_{\la \in \Lambda_{r,s}} \End_{ \C }( C_{r,s}(\lambda) ).
\eneq
For each $\la \in \Lambda_{r,s}$, we identify the algebra $\End_{ \C }( C_{r,s}(\lambda) )$ with the algebra of $\dim_\C  C_{r,s}(\lambda)  \times \dim_\C  C_{r,s}(\lambda) $  matrices over $\C$, by taking a GZ-basis of $C_{r,s}(\lambda)$.
Then we have the following proposition whose proof is identical to the one of \cite[Proposition 1.1]{OV2}.
\begin{prop}  \label{prop:maximal}
The GZ-subalgebra $A_{r,s}$ is identified with the set of all the diagonal matrices. In particular, $A_{r,s}$ is a maximal commutative subalgebra of $B_{r,s}(\delta)$.
\end{prop}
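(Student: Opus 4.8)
The plan is to reproduce, essentially line by line, the Okounkov--Vershik argument of \cite[Proposition~1.1]{OV2}. Write $C_\lambda:=C_{r,s}(\lambda)$ and recall that under the Wedderburn isomorphism $B_{r,s}(\delta)\simeq\bigoplus_\lambda\End_\C(C_\lambda)$ of \eqref{eq:matrix form of the walled Brauer algebra} we have fixed, inside each $C_\lambda$, the GZ-basis $\{v_T\}$ indexed by the paths $T$ in $\B$ from $\emptyset$ to $\lambda$; since every restriction appearing in \eqref{eq:rest1} and \eqref{eq:rest2} is multiplicity-free, a path $T$ is the same datum as its weight sequence $(\lambda^{(0)}=\emptyset,\lambda^{(1)},\dots,\lambda^{(r+s)}=\lambda)$. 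Let $D\subseteq\bigoplus_\lambda\End_\C(C_\lambda)$ denote the subalgebra of all matrices diagonal in these bases. I would establish $A_{r,s}=D$; the ``maximal commutative'' claim is then immediate, since in a finite direct sum of matrix algebras the full diagonal subalgebra is its own centralizer, so any commutative subalgebra containing $D$ coincides with $D$.

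For the inclusion $A_{r,s}\subseteq D$: by construction of the GZ-decomposition the line $\C v_T$ sits inside a nested chain of submodules $C_a(\lambda^{(a)})\subseteq C_{a+1}(\lambda^{(a+1)})\subseteq\cdots\subseteq C_\lambda$, so for each $1\le a\le r+s$ the $B_a$-submodule generated by $v_T$ is the simple $B_a$-module $C_a(\lambda^{(a)})$; hence any $z\in Z(B_a)$ acts on $v_T$ by the scalar by which it acts on $C_a(\lambda^{(a)})$. Thus every generator of $A_{r,s}$, and therefore every element of $A_{r,s}$, is diagonal in the GZ-basis.

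For the reverse inclusion $D\subseteq A_{r,s}$ it is enough to produce, for any two distinct paths $T\ne T'$, an element of $A_{r,s}$ that (being already diagonal by the previous step) has different entries in positions $T$ and $T'$: from such ``separating'' elements, a Lagrange-interpolation product over all $T'\ne T$ exhibits each diagonal idempotent $E_T$ as an element of $A_{r,s}$, and the $E_T$ span $D$. To separate $T$ from $T'$, regard them as weight sequences $(\lambda^{(a)})_a$, $(\mu^{(a)})_a$ and let $a$ be the least index with $\lambda^{(a)}\ne\mu^{(a)}$ (it exists and is $\le r+s$). Since $B_a$ is semisimple and $C_a(\lambda^{(a)})$, $C_a(\mu^{(a)})$ are non-isomorphic simple $B_a$-modules, the primitive central idempotent $z\in Z(B_a)$ attached to the block of $C_a(\lambda^{(a)})$ acts as $1$ on the $B_a$-submodule generated by $v_T$ and as $0$ on the one generated by $v_{T'}$, so $z\in A_{r,s}$ has $T$-entry $1$ and $T'$-entry $0$. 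This completes $A_{r,s}=D$ and hence the proposition.

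The one genuinely substantive ingredient is the separation step, and its content has already been secured before the statement: the branching graph of $B_{r,s}(\delta)$ is multiplicity-free (so a path is recorded by its weight sequence) and each $B_a$ is semisimple (so non-isomorphic simple $B_a$-modules have distinct central characters). I expect this to be the only point requiring thought; everything else is the standard linear algebra of diagonal matrices inside a semisimple algebra, exactly as in \cite{OV2}. It is worth noting that combining Proposition \ref{prop:generate} with this result gives the alternative, and more concrete, description of $A_{r,s}$ as the maximal commutative subalgebra of $B_{r,s}(\delta)$ generated by the Jucys--Murphy elements $L_1,\dots,L_{r+s}$.
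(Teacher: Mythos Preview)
Your proposal is correct and follows exactly the approach the paper intends: the paper gives no independent argument but simply states that the proof is identical to that of \cite[Proposition~1.1]{OV2}, and what you have written is precisely that Okounkov--Vershik argument specialized to the tower $B_0\subset B_1\subset\cdots\subset B_{r+s}$. Your observation that the separating central idempotents you construct have entry $1$ at $T$ and $0$ at $T'$ (not merely distinct entries) makes the ``Lagrange interpolation'' step a plain product $\prod_{T'\ne T} z_{T,T'}=E_T$, which is clean.
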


For a path $T$ in $\B$ given by
$$
T=\emptyset \to \la_1 \to \cdots \to \la_{r+s-1} \to \la_{r+s}, \quad
(\la_a \in \Lambda_a),
$$
set
\eqn
c_T(i) := \begin{cases}
\text{content of }[\la_i^L] /[\la_{i-1}^L]
& \text{if} \ 1 \le i \le  r, \\
-\text{content of }[\la_{i-1}^L] /[\la_{i}^L]
& \text{if} \ r+1 \le i \le  r+s, \ [\la_{i-1}^L] /[\la_{i}^L] \ \text{is a single box}  \\
\text{content of }[\la_i^R] /[\la_{i-1}^R] + \delta,
& \text{if} \ r+1 \le i \le  r+s,  \ [\la_i^R] /[\la_{i-1}^R] \ \text{is a single box}.
\end{cases}
\eneqn
The below was shown for the case $1 \le i \le r$  in  \cite{Murphy81} (see also \cite[Theorem 1.1]{DG} and \cite[Section 5]{OV2}).
\begin{prop} \label{prop:action on a GZ basis}
Let $T$ be given as the above.
Then we have
\eqn
L_i \, v_T = c_T(i) v_T \quad (1 \le i \le r+s).
\eneqn
\end{prop}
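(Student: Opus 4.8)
The plan is to prove the statement by induction on $i$, exploiting the fact that the restriction rules \eqref{eq:rest1} and \eqref{eq:rest2} are multiplicity-free, so that the GZ-basis vector $v_T$ is characterized (up to scalar) by which simple $B_a$-submodule it lies in, for every $a \le r+s$. The key is to relate the action of $L_i$ on $v_T$ to the action of the central element $\zeta_a := L_1 + \cdots + L_a \in Z(B_a)$, which was shown to be central in Proposition \ref{prop:belongtocenter} (take $k=1$; the signs in the power-sum formula are irrelevant at $k=1$).

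First I would record the base case. We have $L_1 = 0$ and $c_T(1) = 0$ (the first box of $[\la_1^L]$ has content $0$), so $L_1 v_T = 0 = c_T(1) v_T$ trivially. Next, for the inductive step, fix $1 \le i \le r+s$ and consider the truncated path $T' = \emptyset \to \la_1 \to \cdots \to \la_i$, which is a path in the branching graph of $B_i$. Under the isomorphism $B_i \simeq B_{i,0}(\delta)$ (if $i \le r$) or $B_i \simeq B_{r,i-r}(\delta)$ (if $i > r$), the vector $v_T$ lies in the simple $B_i$-submodule $C_i(\la_i)$ of $C_{r,s}(\la)$, and in fact $v_T$ (up to scalar) is the GZ-basis vector of $C_i(\la_i)$ indexed by $T'$. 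Since $\zeta_i = L_1 + \cdots + L_i$ is central in $B_i$, it acts on the simple module $C_i(\la_i)$ by a scalar; by Corollary \ref{cor:constant} (or Proposition \ref{prop:action on a cell module} with $k=1$, applied inside $B_i$) that scalar equals $c(\la_i, 1) + \cdots + c(\la_i, i)$, which is the sum of the contents $b-a$ over all boxes of $[\la_i^L]$ plus the sum of $(\text{content} + \delta)$ over all boxes of $[\la_i^R]$. The crucial observation is that, by construction of $c_T$, this total is exactly $c_T(1) + c_T(2) + \cdots + c_T(i)$: each step $\la_{j-1} \to \la_j$ either adds a box to $[\la^L]$ (contributing its content, matching $c_T(j)$ for $j \le r$), removes a box from $[\la^L]$ (contributing $-\text{content}$, matching $c_T(j)$ for $j > r$ in the first sub-case), or adds a box to $[\la^R]$ (contributing $\text{content}+\delta$, matching $c_T(j)$ for $j > r$ in the second sub-case).

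Now I would finish by subtraction: applying the same argument to $\zeta_{i-1} = L_1 + \cdots + L_{i-1}$, which is central in $B_{i-1}$ and hence acts on the $B_{i-1}$-submodule containing $v_T$ by the scalar $c_T(1) + \cdots + c_T(i-1)$, we get
\eqn
L_i \, v_T = (\zeta_i - \zeta_{i-1}) v_T = \Big( \sum_{j=1}^{i} c_T(j) - \sum_{j=1}^{i-1} c_T(j) \Big) v_T = c_T(i) \, v_T,
\eneqn
as claimed. (For $i=1$ interpret $\zeta_0 = 0$.) The main obstacle — really the only nontrivial point — is the bookkeeping identity that the scalar by which $\zeta_a$ acts on $C_a(\la_a)$, namely $\sum_{u \in [\la_a^L]} \cont(u) + \sum_{u \in [\la_a^R]} (\cont(u) + \delta)$, telescopes correctly against the step-by-step definition of $c_T$; this requires care with the second line of the definition of $c_T(i)$ for $r+1 \le i \le r+s$, where a box is \emph{removed} from the left diagram, so that $[\la_i^L] \subset [\la_{i-1}^L]$ and the content of the removed box enters with a minus sign — exactly compensating the drop in $\sum_{u \in [\la^L]} \cont(u)$. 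One should also note that the two sub-cases for $r+1 \le i \le r+s$ are mutually exclusive and exhaustive by the multiplicity-free branching rule \eqref{eq:rest2}, so $c_T(i)$ is well defined. Everything else is immediate from the results already established (semisimplicity of each $B_a$, Lemma \ref{lem:one dimensional}, Corollary \ref{cor:constant}, and the compatibility of the Jucys-Murphy elements with the tower $B_0 \subset \cdots \subset B_{r+s}$).
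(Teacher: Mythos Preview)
Your proof is correct and follows essentially the same route as the paper's: both arguments compute the scalar by which $L_1+\cdots+L_i\in Z(B_i)$ acts on $v_T$ via Proposition~\ref{prop:action on a cell module} with $k=1$, obtaining $c(\la_i):=\sum_{u\in[\la_i^L]}\cont(u)+\sum_{u\in[\la_i^R]}(\cont(u)+\delta)$, and then subtract to get $L_i v_T=(c(\la_i)-c(\la_{i-1}))v_T=c_T(i)v_T$. The only difference is cosmetic: you frame the telescoping identity $c(\la_i)-c(\la_{i-1})=c_T(i)$ as an inductive bookkeeping check, whereas the paper simply states it as an observation.
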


\Proof
For each $\la \in \bigsqcup_{a=0}^{r+s} \Lambda_a$, set
\eqn
c(\la):=\sum_{j=1}^{|\la^L |} \cont(\la^L,j)+\sum_{j=1}^{|\la^R|}(\cont(\la^R,j)+\delta).
\eneqn
Observe that
\eqn
c(\la_i)-c(\la_{i-1})=c_T(i)  \quad \text{for } \ 1 \le i \le r+s.
\eneqn
On the other hand, by Proposition \ref{prop:action on a cell module}, we know
\eqn
(L_1 + \cdots + L_i) v_T
=c(\la_i) v_T \quad \text{for } \ 1 \le i \le r+s.
\eneqn
In particular, we have
\eqn
L_i v_T =(c(\la_i)-c(\la_{i-1})) v_T=c_T(i) v_T,
\eneqn
as desired.
\QED

\begin{comment}
\eqn
(L_1 + \cdots + L_i)v_T
=\sum_{j=1}^{r-t} \cont(\la_i^L,j)v_T+\sum_{j=1}^{i-t}(\cont(\la_i^R,j)+\delta)v_T, %\label{eq:(L_1+...+ L_i)v_T}
\eneqn
where $|\la_i^L|=r-t, |\la_i^R|=i-t$ for some $0 \le t \le i-r$.

{\bf Case (1) : } Assume that $[\la_{i-1}^L] / [\la_i^L]$ is a single box.
Then $\la_{i-1}^L$ is a partition of $r-t+1$ and $\la_{i-1}^R=\la_{i}^R$.
By Proposition \ref{prop:action on a cell module}, we have
 \eqn
(L_1 + \cdots + L_{i-1})v_T
=\sum_{j=1}^{r-t+1} \cont(\la_{i-1}^L,j)v_T+\sum_{j=1}^{i-t}(\cont(\la_{i-1}^R,j)+\delta)v_T. %\label{eq:(L_1+...+L_{i-1})v_T}
\eneqn
Hence, we obtain
\eqn
L_i v_T =(L_1 +\cdots + L_i) v_T-(L_1 +\cdots+ L_{i-1})v_T=c_T(i) v_T.
\eneqn

{\bf Case (2) : } Assume that $[\la_{i}^R] / [\la_{i-1}^R]$ is a single box.
Then $\la_{i-1}^L=\la_i^L$ and $\la_{i-1}^R$ is a partition of $i-t-1$.
By Proposition \ref{prop:action on a cell module}, we have
 \eqn
(L_1 + \cdots + L_{i-1})v_T
=\sum_{j=1}^{r-t} \cont(\la_{i-1}^L,j)v_T+\sum_{j=1}^{i-t-1}(\cont(\la_{i-1}^R,j)+\delta)v_T. %\label{eq:(L_1+...+L_{i-1})v_T}
\eneqn
Hence, we obtain the assertion, as desired. \ebl
\end{comment}

The following is an analogue of \cite[Theorem 2.1]{Murphy}. See also \cite[Definition 3.1]{Mathas}.
\begin{prop}
For each $1\le i \le r+s$, set
$$\mathcal C(i):=\set{c_T(i)}{\text{ $T$ : a path in $\B$ from $\emptyset$ to $\la$
 for some $\la \in \Lambda_{r+s}$}}.$$
The following elements  form a complete set of primitive orthogonal idempotents of $B_{r,s}(\delta)$:
\eqn
I_T:=  \prod_{i=1}^{r+s} \prod_{\overset{c \in \mathcal C(i)}{c \neq c_T(i)}}
\dfrac{L_i-c}{c_T(i)-c}.
\eneqn
\end{prop}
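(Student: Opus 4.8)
The strategy is to show that, under the Wedderburn--Artin isomorphism \eqref{eq:matrix form of the walled Brauer algebra} together with a choice of GZ-basis, $I_T$ becomes the rank-one projector onto the line $\C v_T$ in the block $\End_\C(C_{r,s}(\la))$ indexed by the endpoint $\la$ of $T$ (and zero on every other block). The heart of the argument is the following combinatorial fact: \emph{the content sequence $\bl c_T(1),\dots,c_T(r+s)\br$ determines the path $T$}; equivalently, if $T\neq T'$ are paths in $\B$ from $\emptyset$ to (possibly different) weights in $\Lambda_{r+s}$, then $c_T(i)\neq c_{T'}(i)$ for some $i$. I would prove this by reconstructing the intermediate weights $\la_1,\dots,\la_{r+s}$ of $T$ from the sequence, inductively on $a$. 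For $1\le a\le r$ the partition $\la_a^L$ is obtained from $\la_{a-1}^L$ by adding a box of content $c_T(a)$, and a Young diagram has at most one addable box of a prescribed content, so $\la_a$ is determined. For $r+1\le a\le r+s$ the $a$-th step either removes from $\la_{a-1}^L$ a box of content $-c_T(a)$ or adds to $\la_{a-1}^R$ a box of content $c_T(a)-\delta$, and in either case the box, hence $\la_a$, is uniquely determined once we know which of the two alternatives occurs.

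Thus the only point requiring work is to rule out that \emph{both} alternatives are available at some step $a>r$, i.e.\ that $\la_{a-1}^L$ has a removable box of content $\gamma$ while $\la_{a-1}^R$ has an addable box of content $\gamma'$ with $-\gamma=\gamma'-\delta$, that is $\gamma+\gamma'=\delta$. Here I would argue exactly as in the proof of Lemma \ref{lem:separation}, according to the cases of Proposition \ref{prop:semisimplicity}. From $|\gamma|\le|\la_{a-1}^L|-1$, $|\gamma'|\le|\la_{a-1}^R|$ and $|\la_{a-1}^L|+|\la_{a-1}^R|\le a-1\le r+s-1$ one gets $|\gamma+\gamma'|\le r+s-2$, and since $\gamma+\gamma'\in\Z$ this is incompatible with $\gamma+\gamma'=\delta$ in case (2) ($\delta\notin\Z$) and in case (3) ($|\delta|>r+s-2$). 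In case (1), having assumed $r\ge s$, either $s=0$, in which case no step $a>r$ occurs, or $r=s=0$; so nothing is to be checked. In case (4) we have $\delta=0$ and $(r,s)\in\{(2,1),(3,1)\}$, so the unique step with $a>r$ has $\la_{a-1}=\la_r$ with $\la_r^R=\emptyset$ and $\la_r^L$ a partition of $r\in\{2,3\}$; then $\gamma'=0$, so $\gamma+\gamma'=0$ would force $\gamma=0$, but no partition of $2$ or of $3$ has a removable box of content $0$. This proves the combinatorial fact.

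Granting it, the rest is a formal computation in the split semisimple algebra $B_{r,s}(\delta)$. Each $I_T$ is a polynomial in $L_1,\dots,L_{r+s}$, so it lies in the GZ-subalgebra $A_{r,s}$ and, by Proposition \ref{prop:maximal}, is diagonal in the GZ-basis; using $L_i v_{T'}=c_{T'}(i)\,v_{T'}$ from Proposition \ref{prop:action on a GZ basis} we obtain
\eqn
I_T\, v_{T'}=\Bigl(\ \prod_{i=1}^{r+s}\ \prod_{\overset{c\in\mathcal C(i)}{c\neq c_T(i)}}\frac{c_{T'}(i)-c}{c_T(i)-c}\ \Bigr)v_{T'}.
\eneqn
If $T'=T$ every factor equals $1$ and $I_T v_T=v_T$; if $T'\neq T$ then, by the combinatorial fact, $c_{T'}(i)\neq c_T(i)$ for some $i$, and since $c_{T'}(i)\in\mathcal C(i)$ the factor with $c=c_{T'}(i)$ vanishes, so $I_T v_{T'}=0$. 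Hence $I_T$ acts as the rank-one projector onto $\C v_T$. It follows that $I_T I_{T'}=\delta_{T,T'}\,I_T$, that $\sum_T I_T$ restricts to the identity on every $C_{r,s}(\la)$ and therefore equals $1$ in $B_{r,s}(\delta)$, and that each $I_T$ is primitive because $I_T B_{r,s}(\delta) I_T\simeq\C$. Consequently $\{I_T\}$ is a complete set of primitive orthogonal idempotents.

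The main obstacle is the combinatorial fact, and within it the left-versus-right alternative at the steps $a>r$: its resolution is precisely the semisimplicity dichotomy already used in Lemma \ref{lem:separation}, while everything downstream is bookkeeping in a split semisimple algebra.
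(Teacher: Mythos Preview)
Your argument is correct and complete (up to a harmless sign slip: equating the two alternatives gives $-\gamma=\gamma'+\delta$, i.e.\ $\gamma+\gamma'=-\delta$, not $\gamma+\gamma'=\delta$; since only $|\gamma+\gamma'|$ and the case $\delta=0$ are used, nothing changes).

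However, your route differs from the paper's. The paper obtains the key fact ``the content sequence determines the path'' abstractly, without any case analysis: by Proposition~\ref{prop:generate} the GZ-subalgebra $A_{r,s}$ is generated by $L_1,\dots,L_{r+s}$, and by Proposition~\ref{prop:maximal} it consists of \emph{all} diagonal matrices in the GZ-basis; hence if $c_T(i)=c_{T'}(i)$ for every $i$ then every element of $A_{r,s}$ acts with equal eigenvalue on $v_T$ and $v_{T'}$, forcing $T=T'$ (this is exactly \cite[Remark~1.2]{OV2}). From there the computation $I_Tv_{T'}=\delta_{T,T'}v_{T'}$ is the same as yours. Your direct combinatorial verification is precisely the alternative the paper flags in the Remark following the proof. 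The paper's argument is shorter and conceptually uniform, but it rests on Theorem~\ref{thm:main} (via Proposition~\ref{prop:generate}); your approach is more elementary and self-contained, bypassing the main theorem at the cost of the case analysis along the semisimplicity criterion.
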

\Proof
Combining Proposition \ref{prop:generate} and Proposition  \ref{prop:maximal}, we know that $T=T'$ if and only if $c_T(i)=c_{T'}(i)$ for all $1 \le i \le r+s$ (see \cite[Remark 1.2]{OV2}).
Hence we have
\eqn
I_T v_{T'} = \prod_{i=1}^{r+s} \prod_{\overset{c \in \mathcal C(i)}{c \neq c_T(i)}}
\dfrac{c_{T'}(i)-c}{c_T(i)-c} = \delta_{T, T'} v_{T'},
\eneqn
as desired.
\QED
\begin{remark}
The above can be proved by checking that
a path $T$ in $\B$ is uniquely determined by $(c_{T}(i))_{1 \le i \le r+s}$, whenever the triple $(r,s, \delta)$ belongs to  one of the cases in Proposition \ref{prop:semisimplicity}.
%The converse is also true: If a path $T$ in $\B$ is uniquely determined by $(c_{T}(i))_{1 \le i \le r+s}$, then the triple $(r,s, \delta)$ belongs to  one of the cases in Proposition \ref{prop:semisimplicity}.
\end{remark}

\vskip1em
%%%%%%%%%%%%%%%%%%%%%%%%%%%%%%%%%%%%%%%%%%%%%%%%%%%%%%%%%%%%%%%%%%%%%%%%%%%%%%%%%%%%%%%%%%%%%%%%%%%%%%%%
\section{Blocks of walled Brauer algebra}
In this section, we return to the cases in which $r,s$ and $\delta$ are arbitrarily chosen.
We say that two simple $B_{r,s}(\delta)$-modules $D_{r,s}(\la)$ and $D_{r,s}(\mu)$ \emph{belong to the same block}
 if there exists a sequence of simple $B_{r,s}(\delta)$-modules $D_{r,s}(\la)= D_1, D_2, . . . , D_k = D_{r,s}(\mu)$ such that either $\Ext^1_{B_{r,s}(\delta)}(D_i, D_{i+1})\neq 0$ or $ \Ext^1_{B_{r,s}(\delta)}(D_{i+1}, D_{i}) \neq 0$, for  all  $1 \le i < k$.
  For each $\la \in \dot{\Lambda}_{r,s}$, every element $z$ in the center acts on $D_{r,s}(\la)$ by a scalar multiple, say $\psi_\la(z)$. The assignment $z \mapsto \psi_\la(z)$ defines a $\C$-algebra homomorphism
 $$\psi_\la : Z(B_{r,s}(\delta)) \to \C,$$
and we call $\psi_\la$ the \emph{central character afforded by $D_{r,s}(\la)$}.
Two simple modules $D_{r,s}(\la)$ and $D_{r,s}(\mu)$ belong to the same block if and only if the central characters
$\psi_\la$ and $\psi_\mu$ are identical (see, for example, \cite[Chapter I, Proposition 10.15]{Karp}).
Note that the scalar multiplication on $C_{r,s}(\la)$ induced  by a central element $z$ equals  to $\psi_\la(z)$, because  $D_{r,s}(\la)$ is a quotient of $C_{r,s}(\la)$.

%Given two partitions $\la_1$ and $\la_2$, we denote by $\la_1 \cap \la_2$  the partition whose corresponding Young diagram is the intersection of  $[\la_1]$ and $[\la_2]$.
 We will say that $(\la^L,\la^R)$ and $(\mu^L,\mu^R)$ are \emph{$\delta$-balanced} if there is a pairing of the boxes in $[\la^L]/([\la^L] \cap [\mu^L])$ with those in $[\la^R]/([\la^R] \cap [\mu^R])$ and a pairing of the boxes in $[\mu^L]/([\la^L] \cap [\mu^L])$ with those in $[\mu^R]/([\la^R] \cap [\mu^R])$ such that the contents of each pair sum to $-\delta$.
In \cite{CDDM}, the blocks of $B_{r,s}(\delta)$ are classified as follows:

\begin{prop} {\rm (}\cite[Corollary 7.7]{CDDM} {\rm )}
Two simple modules $D_{r,s}(\la)$ and $D_{r,s}(\mu)$ are in the same block of $B_{r,s} (\delta)$
 if and only if the weights $\la$ and $\mu$ are $\delta$-balanced.
\end{prop}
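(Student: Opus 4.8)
The plan is to reprove the ``only if'' direction of the statement above, that is, Proposition \ref{prop:blocks}: if $D_{r,s}(\la)$ and $D_{r,s}(\mu)$ lie in the same block of $B_{r,s}(\delta)$, then $\la$ and $\mu$ are $\delta$-balanced. The converse implication is the extension-theoretic half; for it we simply quote \cite{CDDM}, and we note that it would also follow from Conjecture \ref{conj:center}. The idea is that although we do not control the full center, we do control the central elements $p(L_1,\ldots,L_{r+s})$ with $p\in S_{r,s}[x;y]$, and these already suffice to separate the central characters of non-$\delta$-balanced weights.

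First I would isolate the combinatorial input as Lemma \ref{lem:equiv_balanced}: for $\la,\mu\in\Lambda_{r,s}$, the equalities $p(c(\la,1),\ldots,c(\la,r+s))=p(c(\mu,1),\ldots,c(\mu,r+s))$ hold for all $p\in S_{r,s}[x;y]$ if and only if $\la$ and $\mu$ are $\delta$-balanced. By Lemma \ref{lem:distinct} the left-hand condition is equivalent to the identity of rational functions
\[
\frac{\prod_{u\in[\la^L]}(1+\cont(u)z)}{\prod_{v\in[\la^R]}(1-(\cont(v)+\delta)z)}=\frac{\prod_{u\in[\mu^L]}(1+\cont(u)z)}{\prod_{v\in[\mu^R]}(1-(\cont(v)+\delta)z)},
\]
where we used that the middle indices in the definition of $c(\la,i)$ contribute trivial factors $1+0z$. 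Clearing denominators and cancelling the factors attached to the cells of $[\la^L]\cap[\mu^L]$ and $[\la^R]\cap[\mu^R]$ turns this into a polynomial identity between $\prod_{u\in A_\la}(1+\cont(u)z)\prod_{v\in B_\mu}(1-(\cont(v)+\delta)z)$ and $\prod_{u\in A_\mu}(1+\cont(u)z)\prod_{v\in B_\la}(1-(\cont(v)+\delta)z)$, where $A_\la=[\la^L]\setminus[\mu^L]$, $B_\la=[\la^R]\setminus[\mu^R]$, and similarly $A_\mu,B_\mu$. Comparing roots yields that the nonzero parts of the multisets $\{\cont(u):u\in A_\la\}\uplus\{-(\cont(v)+\delta):v\in B_\mu\}$ and $\{\cont(u):u\in A_\mu\}\uplus\{-(\cont(v)+\delta):v\in B_\la\}$ coincide, and comparing degrees together with the elementary count $|A_\la|+|B_\mu|=|A_\mu|+|B_\la|$ forces these multisets to also agree at the value $0$, hence to coincide completely.

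To extract the pairings I would invoke the fact that a Young diagram meets every content diagonal in an initial segment; this makes $A_\la$ and $A_\mu$ share no cell of a given content, and likewise $B_\la$ and $B_\mu$, so after the injective shift $c\mapsto-(c+\delta)$ the associated multisets have disjoint supports. Combining the disjointness with the multiset identity forces, value by value, $\{\cont(u):u\in A_\la\}=\{-(\cont(v)+\delta):v\in B_\la\}$ and $\{\cont(u):u\in A_\mu\}=\{-(\cont(v)+\delta):v\in B_\mu\}$; these are exactly bijections $A_\la\to B_\la$ and $A_\mu\to B_\mu$ matching cells whose contents sum to $-\delta$, i.e.\ the $\delta$-balanced condition. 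The reverse implication of the lemma is immediate, since such pairings make the displayed factors cancel in pairs.

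Granting the lemma, the proposition is immediate. If $D_{r,s}(\la)$ and $D_{r,s}(\mu)$ are in the same block then their central characters $\psi_\la,\psi_\mu$ agree on $Z(B_{r,s}(\delta))$; by Corollary \ref{cor:centerssym} each $p(L_1,\ldots,L_{r+s})$ with $p\in S_{r,s}[x;y]$ lies in the center, and by Corollary \ref{cor:constant} it acts on $C_{r,s}(\la)$, hence on its simple quotient $D_{r,s}(\la)$, by the scalar $p(c(\la,1),\ldots,c(\la,r+s))=\psi_\la(p(L_1,\ldots,L_{r+s}))$. Therefore $p(c(\la,\bullet))=p(c(\mu,\bullet))$ for every $p\in S_{r,s}[x;y]$, and Lemma \ref{lem:equiv_balanced} says that $\la$ and $\mu$ are $\delta$-balanced. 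The heart of the argument, and the main obstacle, is the lemma — specifically the bookkeeping around the two exceptional contents, content $0$ among the cells of the left diagrams and content $-\delta$ among those of the right diagrams: these produce factors $1+0z$ that are invisible to the polynomial identity, and passing from ``equal nonzero factors'' to a genuine $\delta$-balanced pairing is precisely the place where the cancellation property of supersymmetric polynomials, rather than mere double symmetry, must be used, via the initial-segment structure of content diagonals.
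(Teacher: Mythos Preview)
Your proof of the ``only if'' direction matches the paper's exactly: same block implies equal central characters, Corollaries \ref{cor:centerssym} and \ref{cor:constant} give equal content-evaluations of all supersymmetric polynomials, and Lemma \ref{lem:equiv_balanced} translates this into $\delta$-balanced; for the ``if'' direction both you and the paper simply cite \cite{CDDM}.

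The one substantive difference is in the proof of Lemma \ref{lem:equiv_balanced} itself. The paper rewrites the rational-function identity as the counting condition
\[
\sharp\{u\in[\la^L]:\cont(u)=k\}-\sharp\{u\in[\la^R]:\cont(u)=-\delta-k\}
=\sharp\{u\in[\mu^L]:\cont(u)=k\}-\sharp\{u\in[\mu^R]:\cont(u)=-\delta-k\}
\]
for all $k$, and then quotes \cite[Lemma 8.1]{CDDM} for the equivalence of this with $\delta$-balanced. Your argument is a self-contained replacement for that citation: the observation that $A_\la=[\la^L]\setminus[\mu^L]$ and $A_\mu=[\mu^L]\setminus[\la^L]$ meet each content diagonal disjointly (and likewise $B_\la,B_\mu$) is correct, and together with the multiset identity it does force $X_\la=Y_\la$ and $X_\mu=Y_\mu$ value by value, as your case analysis shows. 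The size identity $|A_\la|+|B_\mu|=|A_\mu|+|B_\la|$ you use to handle the value $0$ holds because $|\la^L|-|\la^R|=r-s=|\mu^L|-|\mu^R|$ for all $\la,\mu\in\Lambda_{r,s}$. So your route buys independence from \cite{CDDM} for this lemma, at the cost of a short combinatorial digression; the paper's route is shorter but not self-contained.
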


The following lemma shows that the notion of $\delta$-balanced weights is closely related to the
contents evaluation of supersymmetric polynomials.
\begin{lemma} \label{lem:equiv_balanced}
Two weights $\la=(\la^L,\la^R)$ and $\mu=(\mu^L,\mu^R)$ are \emph{$\delta$-balanced} if and only if
$$p((c(\la,i))_{1\le i \le r+s}) = p((c(\mu,i))_{1\le i \le r+s})$$
for every supersymmetric polynomial $p \in S_{r,s}[x;y]$.
\end{lemma}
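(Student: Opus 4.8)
The plan is to rewrite both conditions as identities between multisets of contents and then compare their supports. First I would apply Lemma~\ref{lem:distinct} to the two tuples $\big(c(\la,i)\big)_{1\le i\le r+s}$ and $\big(c(\mu,i)\big)_{1\le i\le r+s}$: the right‑hand side of the present lemma holds iff the rational functions $\prod_{i=1}^{r}(1+c(\la,i)z)\big/\prod_{j=1}^{s}(1-c(\la,r+j)z)$ and its $\mu$‑counterpart coincide. Since the $t$ (resp.\ $t'$) middle entries of $c(\la,\cdot)$ (resp.\ $c(\mu,\cdot)$) are $0$, these collapse to $\prod_{u\in[\la^L]}(1+\cont(u)z)\big/\prod_{v\in[\la^R]}\big(1-(\cont(v)+\delta)z\big)$ and the analogous expression for $\mu$, using that $\{\cont(\la^L,i)\}_i$ is precisely the content multiset of $[\la^L]$ (from the row‑reading tableau $\mathfrak t^{\la^L}$). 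Clearing denominators, substituting $z=1/w$ and multiplying by $w^{\,|\la^L|+|\mu^R|}$ — legitimate because $|\la^L|+|\mu^R|=|\mu^L|+|\la^R|$ and each factor $w+\cont(u)$, $w-\cont(v)-\delta$ is monic linear — turns the equality of rational functions into an equality of monic polynomials in $w$, hence into the multiset identity
$$\{-\cont(u)\mid u\in[\la^L]\}\sqcup\{\cont(v)+\delta\mid v\in[\mu^R]\}=\{-\cont(u)\mid u\in[\mu^L]\}\sqcup\{\cont(v)+\delta\mid v\in[\la^R]\}.$$

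Set $N^L:=[\la^L]\cap[\mu^L]$, $N^R:=[\la^R]\cap[\mu^R]$, and, for $\nu\in\{\la,\mu\}$, $\mathcal A_\nu:=\{-\cont(u)\mid u\in[\nu^L]\setminus N^L\}$ and $\mathcal B_\nu:=\{\cont(v)+\delta\mid v\in[\nu^R]\setminus N^R\}$. Cancelling the common sub‑multisets attached to $N^L$ and $N^R$, the displayed identity is equivalent to $\mathcal A_\la\sqcup\mathcal B_\mu=\mathcal A_\mu\sqcup\mathcal B_\la$; and, since a content‑sum‑$(-\delta)$ bijection between $[\la^L]\setminus N^L$ and $[\la^R]\setminus N^R$ exists iff $\mathcal A_\la=\mathcal B_\la$ as multisets, the pair $\la,\mu$ is $\delta$‑balanced exactly when $\mathcal A_\la=\mathcal B_\la$ and $\mathcal A_\mu=\mathcal B_\mu$. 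So the lemma reduces to the implication $\mathcal A_\la\sqcup\mathcal B_\mu=\mathcal A_\mu\sqcup\mathcal B_\la\ \Rightarrow\ \mathcal A_\la=\mathcal B_\la$ (the converse being immediate, and then $\mathcal A_\mu=\mathcal B_\mu$ follows by cancellation). Here the key point is a disjointness of supports: $[\la^L]\setminus N^L$ occupies exactly the rows $i$ with $\la^L_i>\mu^L_i$, filling the columns $\mu^L_i+1,\dots,\la^L_i$ there, so the values of $-\cont$ it contributes fill the integer interval $[\,i-\la^L_i,\,i-\mu^L_i-1\,]$; dually $[\mu^L]\setminus N^L$ contributes the intervals $[\,i-\mu^L_i,\,i-\la^L_i-1\,]$ over the rows with $\mu^L_i>\la^L_i$. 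A one‑line case check ($i<i'$ versus $i>i'$), using only that $\la^L$ and $\mu^L$ are weakly decreasing, shows that any interval of the first family is disjoint from any interval of the second, so $\supp(\mathcal A_\la)\cap\supp(\mathcal A_\mu)=\emptyset$; the same argument applied to $\la^R,\mu^R$ (all contents shifted by $\delta$) gives $\supp(\mathcal B_\la)\cap\supp(\mathcal B_\mu)=\emptyset$. Then, writing $E:=\mathcal A_\la-\mathcal B_\la$ (a finitely supported $\Z$‑valued function, which by hypothesis also equals $\mathcal A_\mu-\mathcal B_\mu$), the positive part obeys $E^{+}\le\mathcal A_\la$ and $E^{+}\le\mathcal A_\mu$ pointwise, hence is supported on $\supp(\mathcal A_\la)\cap\supp(\mathcal A_\mu)=\emptyset$, so $E^{+}=0$; likewise $E^{-}\le\mathcal B_\la,\mathcal B_\mu$ forces $E^{-}=0$, and therefore $\mathcal A_\la=\mathcal B_\la$.

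The genuine obstacle is this last implication: a priori $\mathcal A_\la\sqcup\mathcal B_\mu=\mathcal A_\mu\sqcup\mathcal B_\la$ is only one global matching of contents, and one must show it is forced to split into the two local balanced pairings — the disjoint‑supports observation is precisely what accomplishes this, and it is uniform in $\delta$ (no case distinction between $\delta\in\Z$ and $\delta\notin\Z$ is needed). The only routine points I would still have to nail down carefully are the identification of $\{\cont(\nu^L,i)\}_i$ with the content multiset of $[\nu^L]$, the bookkeeping showing that the substitution $z=1/w$ produces monic polynomials of equal degree, and the elementary interval inequalities behind the disjointness.
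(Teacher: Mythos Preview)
Your argument is correct and essentially self-contained, whereas the paper's proof outsources the nontrivial direction. The paper proceeds exactly as you do through Lemma~\ref{lem:distinct} and the rational-function identity, then rewrites this as the counting condition
\[
\#\{u\in[\la^L]:\cont(u)=k\}-\#\{u\in[\la^R]:\cont(u)=-\delta-k\}
=\#\{u\in[\mu^L]:\cont(u)=k\}-\#\{u\in[\mu^R]:\cont(u)=-\delta-k\}
\]
for all $k$, and finishes by invoking \cite[Lemma~8.1]{CDDM} to identify this with the $\delta$-balanced condition. Your disjoint-supports argument (showing $\supp(\mathcal A_\la)\cap\supp(\mathcal A_\mu)=\emptyset$ and $\supp(\mathcal B_\la)\cap\supp(\mathcal B_\mu)=\emptyset$ via the interval inequalities, then deducing that the signed difference $E=\mathcal A_\la-\mathcal B_\la=\mathcal A_\mu-\mathcal B_\mu$ must vanish) is precisely what replaces that citation. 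The benefit of your route is that it makes the paper independent of \cite{CDDM} at this point and exposes the combinatorial mechanism forcing the global multiset identity to split into two local ones; the paper's route is of course shorter on the page. The interval-disjointness check you sketch is indeed a two-case verification using only that $\la^L,\mu^L$ are weakly decreasing, and the degree bookkeeping $|\la^L|+|\mu^R|=(r-t)+(s-t')=(r-t')+(s-t)=|\mu^L|+|\la^R|$ is immediate, so the points you flag as routine really are.
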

\Proof
Recall that for $\la,\mu \in \Lambda_{r,s}$ we have
$$p((c(\la,i))_{1\le i \le r+s}) = p((c(\mu,i))_{1\le i \le r+s})$$
for every supersymmetric polynomial $p \in S_{r,s}[x;y]$
if and only if the equation
\eqref{eq:contents rational function}
\eqn
\dfrac{\prod_{i=1}^{r-t} (1+\cont(\la^L,i) z)}{\prod_{j=r+t+1}^{r+s} (1-(\cont(\la^R,j)+\delta) z)}
=\dfrac{\prod_{i=1}^{r-t'} (1+\cont(\mu^L,i) z)}{\prod_{j=r+t'+1}^{r+s} (1-(\cont(\mu^R,j)+\delta) z)}
\eneqn
holds.
It is equivalent to saying that
\eqn
&&\sharp\set{u \in [\la^L]}{\text{content of the box $u$ = $k$}}
-\sharp\set{u \in [\la^R]}{\text{content of the box $u$ = $-\delta-k$}}\\
=&&\sharp\set{u \in [\mu^L]}{\text{content of the box $u$ = $k$}}
-\sharp\set{u \in [\mu^R]}{\text{content of the box $u$ = $-\delta-k$}}
\eneqn
for all $k \in \Z$.

It is shown in \cite[Lemma 8.1]{CDDM} that the above condition is equivalent to saying that
$\la$ and $\mu$ are $\delta$-balanced.
\QED

We give an alternative proof of a part of the classification theorem of blocks of $B_{r,s}(\delta)$, appeared in \cite{CDDM}.
\begin{prop} {\rm (}\cite[Corollary 7.3]{CDDM}{\rm)} \label{prop:blocks}
If two simple modules $D_{r,s}(\la)$ and $D_{r,s}(\mu)$ are in the same block of $B_{r,s}(\delta)$, then
the weights $\la$ and $\mu$ are $\delta$-balanced.
\end{prop}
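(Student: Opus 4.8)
The plan is to reduce the statement to the content evaluation of supersymmetric polynomials established in Section~3, and then to invoke Lemma~\ref{lem:equiv_balanced}. First I would recall, as observed just before the statement, that $D_{r,s}(\la)$ and $D_{r,s}(\mu)$ lie in the same block precisely when the central characters $\psi_\la$ and $\psi_\mu$ coincide as $\C$-algebra homomorphisms $Z(B_{r,s}(\delta))\to\C$. Thus it is enough to exhibit, for every pair that is \emph{not} $\delta$-balanced, a central element on which $\psi_\la$ and $\psi_\mu$ disagree; and the natural supply of such central elements is provided by the supersymmetric polynomials in the Jucys-Murphy elements.

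Next I would combine Corollary~\ref{cor:centerssym} with Corollary~\ref{cor:constant}. By Corollary~\ref{cor:centerssym}, for every $p\in S_{r,s}[x;y]$ the element $p(L_1,\ldots,L_{r+s})$ lies in $Z(B_{r,s}(\delta))$. By Corollary~\ref{cor:constant}, this central element acts on the cell module $C_{r,s}(\la)$ by the scalar $p\bigl(c(\la,1),\ldots,c(\la,r+s)\bigr)$. Since $D_{r,s}(\la)$ is a (nonzero) quotient of $C_{r,s}(\la)$, a central element acting on $C_{r,s}(\la)$ by $c\cdot\mathrm{id}$ necessarily acts on $D_{r,s}(\la)$ by $c\cdot\mathrm{id}$ as well; hence
$$\psi_\la\bigl(p(L_1,\ldots,L_{r+s})\bigr)=p\bigl((c(\la,i))_{1\le i\le r+s}\bigr),$$
and likewise with $\mu$ in place of $\la$. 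Therefore, if $D_{r,s}(\la)$ and $D_{r,s}(\mu)$ belong to the same block, then $\psi_\la=\psi_\mu$ forces $p\bigl((c(\la,i))_{1\le i\le r+s}\bigr)=p\bigl((c(\mu,i))_{1\le i\le r+s}\bigr)$ for all $p\in S_{r,s}[x;y]$. By Lemma~\ref{lem:equiv_balanced} this is exactly the assertion that $\la$ and $\mu$ are $\delta$-balanced, which completes the argument.

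I expect there to be no genuinely hard step here: the proposition is assembled entirely from Corollary~\ref{cor:centerssym}, Corollary~\ref{cor:constant}, and Lemma~\ref{lem:equiv_balanced}, which in turn relies on \cite[Lemma~8.1]{CDDM}. The only point that deserves a line of care is the passage from the scalar action on the cell module $C_{r,s}(\la)$ to the scalar action on its irreducible head $D_{r,s}(\la)$, which is immediate from the surjection $C_{r,s}(\la)\twoheadrightarrow D_{r,s}(\la)$. It is worth noting that this argument proves only one implication of the block classification — that being in the same block implies $\delta$-balancedness; the converse implication is the deeper half of \cite[Corollary~7.7]{CDDM} and is not addressed by this method.
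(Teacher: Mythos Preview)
Your proof is correct and follows essentially the same approach as the paper: use Corollary~\ref{cor:centerssym} and Corollary~\ref{cor:constant} to identify $\psi_\la(p(L_1,\ldots,L_{r+s}))$ with the content evaluation $p((c(\la,i)))$, deduce equality of these evaluations from $\psi_\la=\psi_\mu$, and conclude via Lemma~\ref{lem:equiv_balanced}. The passage from $C_{r,s}(\la)$ to $D_{r,s}(\la)$ that you single out is exactly the observation the paper records just before stating the proposition.
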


\Proof
We have $\psi_\la=\psi_\mu$. In particular, by Corollary \ref{cor:centerssym}, and Corollary \ref{cor:constant},
we have
$$p((c(\la,i))_{1\le i \le r+s}) = \psi_\la (p(L_1,\ldots,L_{r+s}))
= \psi_\mu(p(L_1,\ldots,L_{r+s}))=p((c(\mu,i))_{1\le i \le r+s})$$
 for every supersymmetric polynomial $p \in S_{r,s}[x;y]$.
Hence $\la$ and $\mu$ are $\delta$-balanced by the above lemma.
\QED

We expect that the following generalization of our main theorem holds.
 Note that it corresponds to \cite[Conjecture 7.4]{SartoriStroppel}, if we take the modification given  in Remark \ref{rem:different} and focus on $B_{r,s}(\delta)$.
\begin{conjecture} {\rm (see also \cite[Conjecture 7.4]{SartoriStroppel})} \label{conj:center}
For every  $\delta \in \C$,
the center of walled Brauer algebra $B_{r,s}(\delta)$ is generated by the supersymmetric polynomials in the Jucys-Murphy elements $L_1,\ldots,L_{r+s}$.
\end{conjecture}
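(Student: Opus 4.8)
The plan is to reduce Conjecture \ref{conj:center} to a dimension count. Write $S\subseteq B_{r,s}(\delta)$ for the subalgebra generated by the supersymmetric polynomials in $L_1,\dots,L_{r+s}$; by Corollary \ref{cor:centerssym} we have $S\subseteq Z(B_{r,s}(\delta))$, and since both are finite dimensional it suffices to prove $\dim_\C S=\dim_\C Z(B_{r,s}(\delta))$. First I would identify the semisimple quotient $S/\on{rad}S$. By Corollary \ref{cor:constant} each $p\in S_{r,s}[x;y]$ acts on the cell module $C_{r,s}(\la)$ by the scalar $p\bigl((c(\la,i))_{1\le i\le r+s}\bigr)$, and by Lemma \ref{lem:equiv_balanced} two weights produce the same family of scalars precisely when they are $\delta$-balanced; combined with the block classification of \cite{CDDM} this shows that $S/\on{rad}S\cong\C^{b}$, where $b$ is the number of blocks of $B_{r,s}(\delta)$. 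Thus the conjecture is equivalent to $\dim\on{rad}S=\dim Z(B_{r,s}(\delta))-b$, i.e. to matching the nilpotent parts of $S$ and of the centre.

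For the right-hand side I would invoke the known structure of the non-semisimple blocks: by \cite{BS} each block of $B_{r,s}(\delta)$ is Morita equivalent to a block of a generalised Khovanov arc algebra, whose centre admits an explicit combinatorial description, yielding a closed formula for $\dim Z(B_{r,s}(\delta))$ as a sum over blocks. For the left-hand side the point is to compute the image of $S_{r,s}[x;y]$ not merely on cell modules but on the regular representation. Using Proposition \ref{prop:commuting relations with generators}, Lemma \ref{lem:relations between tau and L_i} and the cell filtration of $B_{r,s}(\delta)$, one would analyse the (now non-semisimple) action of the commuting family $L_1,\dots,L_{r+s}$ in the radical layers of the indecomposable projectives, block by block; since by Stembridge's theorem the power sums $p_k(L_1,\dots,L_{r+s})$ generate $S$ as an algebra, and $L_1+\dots+L_{r+s}-s\delta=-p_1(L_1,\dots,L_{r+s})$ is genuinely nilpotent on a block at a ``critical'' value of $\delta$, the task is to show that these elements exhaust $\on{rad}Z$ block by block. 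The two-dimensional algebra $B_{1,1}(0)\cong\C[e]/(e^2)$, in which $-e=L_2=p_1(L_1,L_2)$ already spans the radical of the centre, is the model case to generalise.

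A probably cleaner alternative is to pass to the degenerate affine walled Brauer algebra of \cite{RuiSu}. Sartori and Stroppel \cite{SartoriStroppel} describe its centre in terms of supersymmetric functions in the affine Jucys--Murphy elements, and $B_{r,s}(\delta)$ is a cyclotomic (level-one) quotient of it. If one proves that the quotient map carries the centre of the affine algebra \emph{onto} $Z(B_{r,s}(\delta))$ --- which, as for cyclotomic Hecke algebras and VW-type algebras, should follow from a suitable freeness or basis theorem for the cyclotomic quotient --- then the conjecture follows at once from the Sartori--Stroppel description together with the identification of their generators with our $L_k$ recorded in Remark \ref{rem:different}.

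The hard part, common to both approaches, is the same: off the semisimple locus the cell modules $C_{r,s}(\la)$ are no longer irreducible, so the scalar computation of Corollary \ref{cor:constant} controls $S$ only modulo its radical, and one must genuinely understand how $L_1,\dots,L_{r+s}$ act in the radical filtration of the projective modules --- equivalently, prove surjectivity of the affine centre map. This is a representation-theoretic input well beyond the cell-module calculations used here, which is why the statement is left as a conjecture; the semisimple case (Theorem \ref{thm:main}) and the ``same block $\Rightarrow$ $\delta$-balanced'' half of the block classification (Proposition \ref{prop:blocks}) are what make it plausible that the nilpotent parts in fact match.
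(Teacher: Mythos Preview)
This statement is a \emph{conjecture} in the paper, not a theorem; the paper does not prove it. What the paper offers as evidence is Theorem~\ref{thm:main} (the semisimple case), Proposition~\ref{prop:blocks} (the ``same block $\Rightarrow$ $\delta$-balanced'' direction), Remark~\ref{rem:if the conjecture is true,...} (the converse block direction would follow), and a direct verification for $r=s=2$ by computing a basis of the centre and expressing $p_0,p_1,p_2,p_3,e_2,e_3$ in the Jucys--Murphy elements. There is therefore no paper proof to compare your proposal against.

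Your proposal is not a proof either, and to your credit you say so explicitly in the last paragraph. Both strategies you sketch --- matching $\dim\on{rad}S$ with $\dim Z(B_{r,s}(\delta))-b$ via the Khovanov arc algebra description of block centres, or proving surjectivity of the centre map from the degenerate affine walled Brauer algebra to its level-one cyclotomic quotient --- are sensible lines of attack, but in each case the decisive step (controlling the nilpotent part of $S$, respectively surjectivity onto $Z(B_{r,s}(\delta))$) is exactly the open problem, and nothing in \cite{BS}, \cite{RuiSu} or \cite{SartoriStroppel} supplies it. So the gap you identify is the genuine obstruction, and your write-up should be read as a research outline, not a proof.

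One small slip: you write $L_1+\cdots+L_{r+s}-s\delta=-p_1(L_1,\dots,L_{r+s})$, but for $k=1$ the sign $(-1)^{k+1}$ is $+1$, so $p_1(L_1,\dots,L_{r+s})=L_1+\cdots+L_{r+s}$ and hence $z_{r,s}=p_1(L_1,\dots,L_{r+s})-s\delta$, not its negative. This does not affect the rest of your outline.
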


\begin{remark} \label{rem:if the conjecture is true,...}
If the above conjecture is true, then the
central characters $\psi_\la$ and $\psi_\mu$ are identical  for every $\delta$-balanced pair $\la$ and $\mu$. Indeed, the central characters are given by the contents evaluation of supersymmetric polynomials in Jucys-Murphy elements, and by Lemma
\ref{lem:equiv_balanced}, the contents evaluations are identical whenever  $\la$ and $\mu$ are $\delta$-balanced.
In turn, the simple modules  $D_{r,s}(\la)$ and $D_{r,s}(\mu)$ belong to the same block. It would recover  \cite[Corollary 7.7]{CDDM}, the other direction of the classification of the blocks of $B_{r,s}(\delta)$.
\end{remark}

We close the paper with an example  supporting the above conjecture in  small degree.
\begin{example}
Let $r=2, s=2$.
 % and $\delta=1$. Then $B_{r,s}(\delta)$ is not semisimple.
By the method in \cite{Shalile}, we obtain  a  basis of the centralizer  $Z_{B_{2,2}(\delta)}(\C[\sym_2 \times \sym_2])$  of the subalgebra $\C[\sym_2 \times \sym_2]$ in $B_{2,2}(\delta)$. Each element of the basis corresponds to a  \emph{walled generalized cycle types}. For the detail, see \cite[Section 7]{Shalile}.
We enumerate thus obtained elements as follows:
\eqn
&&C_1=\one, \quad C_2=(1,2), \quad C_3=(3,4) \quad C_4=e_{2,3} + e_{1,3} + e_{1,4} + e_{2,4}, \\  &&C_5=e_{1,3} e_{2,4} + e_{1,4} e_{2,3}, \quad C_6=C_2 C_3, \quad C_7=C_2 C_4, \quad C_8= C_3 C_4, \\
&&C_9=C_2 C_5, \quad C_{10}=C_4 C_6.
\eneqn
In particular, we have $\dim Z_{B_{2,2}(\delta)}(\C[\sym_2 \times \sym_2])=10$.
Note that an element in the centralizer  $Z_{B_{2,2}(\delta)}(\C[\sym_2 \times \sym_2])$ is central in $B_{2,2}(\delta)$ if and only if it commutes with the generator $e_{2,3}$.
Now the equation
\eqn
\Big(\sum_{i=1}^{10} a_i C_i \Big) e_{2,3}  - e_{2,3} \Big( \sum_{i=1}^{10} a_i C_i \Big) =0
\eneqn
yields the following system of  $4$ linear equations:
\eqn
&&a_2+a_4+\delta a_7+a_{10}=0,
\quad a_3+a_4+\delta a_8+a_{10}=0, \\
&&a_5+a_7+a_8+\delta a_9=0,
\quad a_6+a_7+a_8+\delta a_{10}=0.
\eneqn

From the above, we obtain a basis of $Z(B_{2,2}(\delta))$:
\eqn
&&\{ B_1:=\one, \ B_2:=C_4-C_2-C_3, \ B_3:=C_8-\delta C_3-C_5-C_6, \ B_4:=C_7-\delta C_2 -C_5-C_6,  \\
&&\   \ B_5:=C_9-\delta C_5, \ B_6=C_{10}-C_3-\delta C_6-C_2 \
\}.
\eneqn
In particular, we have
 $\dim Z(B_{2,2}(\delta))=6$, which is independent from $\delta$ (see also the conjecture on the dimension of the center of the Brauer algebra in \cite[Introduction]{Shalile}).

On the other hand, we obtain the following equalities by direct calculations:
\eqn
 \ p_0(L_1,L_2,L_3,L_4)  &&= \one=B_1, \\
 \ p_1(L_1,L_2,L_3,L_4)  &&= 2\delta C_1 +C_2+C_3-C_4 = 2\delta B_1 - B_2,\\
  \ e_2(L_1,L_2,L_3,L_4) &&=
  (3 \delta^2+1)C_1+ 2 \delta C_2+3 \delta C_3-2 \delta C_4+C_5+C_6-C_8  \\
  &&= (3 \delta^2 +1) B_1 -2 \delta B_2 - B_3, \\
  \ p_2(L_1,L_2,L_3,L_4) &&= -2 \delta^2 C_1-2  \delta C_3+ \delta C_4-C_7+C_8
 = -2\delta^2 B_1 +\delta B_2  +B_3-B_4, \\
    \ e_3(L_1,L_2,L_3,L_4) &&=
(4 \delta^3+4 \delta) C_1+ (3 \delta^2+1) C_2+(6 \delta^2+1) C_3+(-3 \delta^2-1) C_4+2 \delta C_5 \\
 && \quad +3 \delta C_6-3 \delta C_8+C_9 \\
 &&= (4 \delta^3 + 4 \delta) B_1 +(-3\delta^2-1) B_2 -3 \delta B_3 + B_5,\\
 \ p_3(L_1,L_2,L_3,L_4) &&=(2 \delta^3 +3 \delta) C_1 +C_2+(3 \delta^2+1) C_3 +(-\delta^2-2) C_4+\delta C_7-2 \delta C_8+C_9+C_{10}  \\
 &&=(2\delta^3+3 \delta) B_1 + (-\delta^2-2)  B_2  -  2 \delta B_3  +\delta B_4 + B_5 +B_6.
\eneqn
\begin{comment}
\eqn	
\left(\begin{array}{rrrrrrrrrr}
1 & 0 & 0 & 0 & 0 & 0 & 0 & 0 & 0 & 0 \\
2 x & 1 & 1 & -1 & 0 & 0 & 0 & 0 & 0 & 0 \\
3 x^{2} + 1 & 2 x & 3 x & -2 x & 1 & 1 & 0 & -1 & 0 & 0 \\
-2 x^{2} & 0 & -2 x & x & 0 & 0 & -1 & 1 & 0 & 0 \\
4 x^{3} + 4 x & 3 x^{2} + 1 & 6 x^{2} + 1 & -3 x^{2} - 1 & 2 x & 3 x & 0 & -3 x & 1 & 0 \\
2 x^{3} + 3 x & 1 & 3 x^{2} + 1 & -x^{2} - 2 & 0 & 0 & x & -2 x & 1 & 1
\end{array}\right)
\eneqn
\eqn
\left(\begin{array}{rrrrrr}
1 & 2 x & 3 x^{2} + 1 & -2 x^{2} & 4 x^{3} + 4 x & 2 x^{3} + 3 x \\
0 & 1 & 2 x & 0 & 3 x^{2} + 1 & 1 \\
0 & 1 & 3 x & -2 x & 6 x^{2} + 1 & 3 x^{2} + 1 \\
0 & -1 & -2 x & x & -3 x^{2} - 1 & -x^{2} - 2 \\
0 & 0 & 1 & 0 & 2 x & 0 \\
0 & 0 & 1 & 0 & 3 x & 0 \\
0 & 0 & 0 & -1 & 0 & x \\
0 & 0 & -1 & 1 & -3 x & -2 x \\
0 & 0 & 0 & 0 & 1 & 1 \\
0 & 0 & 0 & 0 & 0 & 1
\end{array}\right)
\eneqn
\end{comment}
Observe that the matrix
\eqn      	      	
\left(\begin{array}{rrrrrr}
1 & 2 \delta & 3 \delta^{2} + 1 & -2 \delta^{2} & 4 \delta^{3} + 4 \delta & 2 \delta^{3} + 3 \delta \\
0 & -1 & -2 \delta & \delta & -3 \delta^{2} - 1 & -\delta^{2} - 2 \\
0 & 0 & -1 & 1 & -3 \delta & -2 \delta \\
0 & 0 & 0 & -1 & 0 & \delta \\
0 & 0 & 0 & 0 & 1 & 1 \\
0 & 0 & 0 & 0 & 0 & 1
\end{array}\right)
\eneqn
is invertible for every $\delta \in \C$.
Hence the evaluation of $\{p_0, p_1,p_2,p_3,e_2,e_3\}$ at $L_1,L_2,L_3,L_4$ becomes a basis of the center
$Z(B_{2,2}(\delta))$ for every  $\delta \in \C$, and hence the supersymmetric polynomials in $L_1,L_2,L_3,L_4$ generate the center.

\end{example}

\begin{comment}
The reduce row echelon form is
$$
\left(\begin{array}{cccccccccc}
1 & 0 & 0 & 0 & 0 & 0 & 0 & 0 & 0 & 0 \\
0 & 1 & 0 & 0 & 0 & 0 & -\frac{x}{2 \, {\left(x^{2} - 2\right)}} - \frac{1}{2 \, x} & -\frac{x}{2 \, {\left(x^{2} - 2\right)}} + \frac{1}{2 \, x} & \frac{1}{x^{2} - 2} & \frac{1}{x^{2} - 2} \\
0 & 0 & 1 & 0 & 0 & 0 & -\frac{x}{2 \, {\left(x^{2} - 2\right)}} + \frac{1}{2 \, x} & -\frac{x}{2 \, {\left(x^{2} - 2\right)}} - \frac{1}{2 \, x} & \frac{1}{x^{2} - 2} & \frac{1}{x^{2} - 2} \\
0 & 0 & 0 & 1 & 0 & 0 & -\frac{x}{x^{2} - 2} & -\frac{x}{x^{2} - 2} & \frac{2}{x^{2} - 2} & \frac{2}{x^{2} - 2} \\
0 & 0 & 0 & 0 & 1 & 0 & \frac{x^{2}}{2 \, {\left(x^{2} - 2\right)}} - \frac{\frac{3 \, x^{3}}{x^{2} - 2} - 2 \, x {\left(\frac{x^{2}}{x^{2} - 2} - 1\right)} - 3 \, x}{2 \, x} - \frac{1}{2} & \frac{x^{2}}{2 \, {\left(x^{2} - 2\right)}} - \frac{\frac{3 \, x^{3}}{x^{2} - 2}
- 2 \, x {\left(\frac{x^{2}}{x^{2} - 2} - 1\right)} - 3 \, x}{2 \, x} - \frac{1}{2} & -\frac{x}{x^{2} - 2} + \frac{\frac{x^{2}}{x^{2} - 2} - 1}{x} & 0 \\
0 & 0 & 0 & 0 & 0 & 1 & \frac{\frac{3 \, x^{3}}{x^{2} - 2} - 2 \, x {\left(\frac{x^{2}}{x^{2} - 2} - 1\right)} - 3 \, x}{2 \, x} & \frac{\frac{3 \, x^{3}}{x^{2} - 2} - 2 \, x {\left(\frac{x^{2}}{x^{2} - 2} - 1\right)} - 3 \, x}{2 \, x} & -\frac{\frac{x^{2}}{x^{2} - 2}

- 1}{x} & -\frac{x}{x^{2} - 2}
\end{array}\right)
$$
\end{comment}

\vskip1em
%%%%%%%%%%%%%%%%%%%%%%%%%%%%%%%%%%%%%%%%%%%%%%%%%%%%%%%%%%%%%%%%%%%%%%%%%%%%%%%%%%%%%%%%%%%%%

\section{Center of the quantized walled Brauer algebra}
In this section,
we  establish an analogue of Theorem \ref{thm:main}
for the quantized walled Brauer algebra $H_{r,s}(q,\rho)$.
%\subsection{Jucys-Murphy elements of quantized walled Brauer algebras}
%Let $\A=\Z[q, q^{-1},\rho,\rho^{-1}, \delta]$ be the subring of $\Q(q,\rho)$, the
%ring of rational functions in $q$ and $\rho$, generated by
%$q, q^{-1},\rho,\rho^{-1}$ and $\delta:=\frac{q-q^{-1}}{\rho-\rho^{-1}}$.
The following definition appeared in \cite{KM1, Leduc}.
\begin{definition} \label{def:quantum walled Brauer algebra}
{\rm
Let $r$ and $s$ be nonnegative integers. Let $R$ be an integral domain and let $q,\rho$ be elements in $R$ such that $q^{-1}, \rho^{-1}$ and $\delta := \frac{\rho-\rho^{-1}}{q-q^{-1}}$ lie in $R$.
% $R$ can be a commutative ring such that  $q$ and $\rho$ are invertible, $q-q^{-1}$ is not a zero-divisor.
We denote by $H_{r,s}^{R}(q,\rho)$  the associative algebra with $\one$  over $R$ generated by
 $S_1, \ldots, S_{r-1},$ $S_{r+1}, \ldots,$ $S_{r+s-1}$,
  $E_{r,r+1}$ with the defining relations
\begin{equation*}
 \begin{aligned}
    &(S_i-q)(S_i+q^{-1})=0, \quad  S_iS_{i+1}S_i=S_{i+1}S_iS_{i+1}, \quad  S_iS_j=S_jS_i \ \  (|i-j|>1), \\
 & E_{r,r+1}^2= \delta E_{r,r+1}, \quad  E_{r,r+1} S_j =S_j E_{r,r+1} \ \ (j \neq r-1,r+1),\\
 & \rho E_{r,r+1}=E_{r,r+1} S_{r-1} E_{r,r+1} =E_{r,r+1} S_{r+1} E_{r,r+1}, \\
& E_{r,r+1} S_{r-1}^{-1} S_{r+1} E_{r,r+1} S_{r-1}   = E_{r,r+1} S_{r-1}^{-1} S_{r+1} E_{r,r+1} S_{r+1}, \\
& S_{r-1} E_{r,r+1} S_{r-1}^{-1} S_{r+1} E_{r,r+1} = S_{r+1} E_{r,r+1} S_{r-1}^{-1} S_{r+1} E_{r,r+1}.
%& S_{r-1} S_{r+1} E_{r,r+1} S_{r+1} S_{r-1} E_{r,r+1}  = E_{r,r+1} S_{r-1} S_{r+1} E_{r,r+1} S_{r+1} S_{r-1}.
  \end{aligned}
\end{equation*}
}
\end{definition}
Note that the element $S_i^{-1}$ in the last two relations is given by $S_i^{-1}= S_i-(q-q^{-1}) \one$ from the relation in the first line.

By
\cite{Leduc},
% AT LEAST BY \cite[Lemma 2.4, Corollary 2.9]{DDS}, BUT I HOPE WE CAN REFER LEDUC ONLY HERE.
the algebra $H_{r,s}^{R}(q,\rho)$ has an $R$-linear basis
 which is in bijection with  $(r,s)$-walled Brauer diagrams:
 %I THINK THAT THE ABOVE IS ESTABLISHED IN  [DDS] FOR ARBITRARY BASE RING $R$,
%WHERE AS THE PARTICULAR CASE $R=\C(q,\rho)$ WAS DONE IN [LEDUC]. NEED TO BE CHECKED.
 for each $(r,s)$-walled Brauer diagram $c$, we attach a monomial  $T_c \in H_{r,s}^R(q,\rho)$ in $S_i, S_i^{-1}, E_{r,r+1}$  so that the expression of each basis element does not depend on the choice of the base ring $R$. %\ber (NEED TO BE ELABORATED, maybe using \cite{DDS}.)  \er 
See \cite{Leduc} for the explicit expression.
It is called the \emph{standard monomial basis}. See also \cite{DDS, Halverson, KM2}.
%$$\set{S_c}{c \ \text{: $(r,s)$-walled Brauer diagram}}$$

Now assume that we have a ring homomorphism $\phi:R \to S$ for some integral domain $S$.
Then $H_{r,s}^S(\phi(q),\phi(\rho))$ becomes an $R$-algebra via $\phi$, and we have an $R$-algebra homomorphism
$\tilde \phi : H_{r,s}^R(q,\rho) \to H_{r,s}^S(\phi(q),\phi(\rho))$,
sending the generators $E_{r, r+1}$ and $S_i$'s to themselves.
By the definition of standard monomial basis, the homomorphism $\widetilde \phi$ sends $T_c$ to $T_c$ for each $(r,s)$-walled Brauer diagram $c$.
Note that $\{T_c \} \subset H_{r,s}^S(\phi(q),\phi(\rho))$ is a  linearly independent subset over $R$ if and only if $\phi$ is injective.
Hence, if $\phi$ is injective, then $\tilde \phi$ is also injective and the
image of $\widetilde \phi$ is isomorphic to $H_{r,s}^R(q,\rho)$ as an $R$-algebra.
In particular, the image of $\widetilde \phi$ has an $R$-basis $\{T_c\}$ so that it can be characterized as the
$R$-subalgebra of $H_{r,s}^S(\phi(q),\phi(\rho))$ generated by $E_{r,r+1}, S_i \ (i=1,\ldots, r-1,r+1, \ldots, r+s-1)$.

The following definition is motivated by Definition \ref{def:JM element} as well as \cite{Morton}.
\begin{definition} \label{def:quantum JM}
{\rm
Set
\eqn
\mathcal L_1: = 0, \quad \mathcal L_{r+1} := \rho\Big(\sum_{j=1}^{r} -E_{j,r+1} +\delta \Big) \  \in \ H^R_{r,s}(q, \rho),
 \eneqn
 where
 \eqn
 E_{j,k}:=(S_{k-1} \cdots S_{r+1} )(S_j^{-1} \cdots S_{r-1}^{-1} )E_{r,r+1}  (S_{r-1}^{-1} \cdots S_j^{-1})(  S_{r+1} \cdots S_{k-1} ) \quad \text{for } \ j \le r < k.
 \eneqn
 Then we define
\begin{eqnarray} \label{def:quantum JM element}
\mathcal L_k
:=
\begin{cases}
S_{k-1}^{-1} \mathcal L_{k-1} S_{k-1}^{-1}+ S_{k-1}^{-1} & \text{if } \ 2\le k \le r, \\
S_{k-1} \mathcal L_{k-1} S_{k-1} +S_{k-1}  & \text{if }\  r+2 \le k \le r+s.
\end{cases}
\end{eqnarray}

We call these $\mathcal L_k$'s the \emph{Jucys-Murphy elements of $H^R_{r,s}(q,\rho)$}
}.
\end{definition}

For convenience, we set
\eqn
&& T_{(a,b)}=T_{(b,a)}:= \begin{cases} S_{b-1}^{-1} \cdots S_{a+1}^{-1} S_a^{-1} S_{a+1}^{-1}  \cdots S_{b-1}^{-1}
 \ \text{for } 1 \le a < b \le r, \\
                      S_{b-1} \cdots S_{a+1} S_a S_{a+1}  \cdots S_{b-1}
 \ \  \text{   for }   r+1 \le a < b \le r+s. \end{cases}
\eneqn

Calculating $\mathcal{L}_k$, we have
\eqn
\mathcal L_k=
\begin{cases}
\sum_{j=1}^{k-1} T_{(j,k)}     & \text{if } \ 2\le k \le r, \\
\rho \big(\sum_{j=1}^{r} -E_{j,k} +\delta \big) + \rho^2 \sum_{j=r+1}^{k-1} T_{(j,k)}  & \text{if }\  r+2 \le k \le r+s.
\end{cases}
\eneqn

\begin{remark}
  The elements $\mathcal L_1, \ldots, \mathcal L_r$ can be considered as Jucys-Murphy elements of Hecke algebra $H_r^R(q)$, which were firstly introduced in \cite{DJ}.
  Indeed, %we can observe that 
if we replace $T_{(i,i+1)}$ and $q$ in \cite{DJ} with $q^{-1}S_i^{-1}$ and $q^{-2}$, respectively,
  we get $\widetilde{L}_i = q \mathcal L_i$ ($i\ge 2$), where $\widetilde{L}_i$ is the image of \emph{Murphy operator} called in \cite{DJ}.
\end{remark}

Proposition \ref{prop:quantum commuting relations with generators} is analogous to Proposition \ref{prop:commuting relations with generators}.
To prove it, we need a lemma.
\begin{lemma} \label{lem:quantum relations between horizontal lines and vertical lines}
For all admissible $i, j, k$,  we have
  \begin{enumerate}[\rm(1)]
\item $S_{i}  T_{(j,k)} =T_{(j,k)} S_i$ and \ $S_i E_{j,k}=E_{j,k} S_i$ for $i, i+1 \neq j,k$,
%\item $ S_i T_{(i,k)} =T_{(i+1,k)} S_i - (q-q^{-1}) T_{(i+1,k)}$, \ $T_{(i,k)} S_i =S_i T_{(i+1,k)} -(q-q^{-1}) T_{(i+1,k)}$  for    $i  < r$, \\
% $S_i T_{(i,k)}=T_{(i+1,k)}S_i + (q-q^{-1}) T_{(i,k)}$, \  $T_{(i,k)}S_i = S_i T_{(i+1,k)} +(q-q^{-1}) T_{(i,k)}$ \ \ \ for   $i > r$,
\item $ S_i E_{i,k} =E_{i+1,k} S_i - (q-q^{-1}) E_{i+1,k}$ and \  $E_{i,k} S_i =S_i E_{i+1,k} -(q-q^{-1}) E_{i+1,k}$,
 %$S_k E_{j,k}=E_{j,k+1}S_k -(q-q^{-1}) E_{j,k+1}$, $E_{i,k}S_k=S_k E_{i,k+1} -(q-q^{-1}) E_{i,k+1}$,
\item $E_{r,r+1} E_{j,k} = E_{j,k} E_{r,r+1} $ for $j \neq r, k \neq r+1$,
\item $E_{r,r+1} E_{i,r+1} = \rho^{-1} E_{r,r+1} T_{(i,r)}$ and \ $E_{i,r+1} E_{r,r+1} = \rho^{-1} T_{(i,r)}E_{r,r+1}$ for $i \neq r$,\\
        $E_{r,r+1} E_{r,k} \ \  = \rho \; E_{r,r+1} T_{(r+1,k)}$ and \ $E_{r,k} E_{r,r+1} \ \ =  \rho \;  T_{(r+1,k)}E_{r,r+1}$ for $k \neq r+1$.
\end{enumerate}
\end{lemma}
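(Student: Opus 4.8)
The plan is to check the identities (1)--(4) by direct computation inside $H^R_{r,s}(q,\rho)$, using the defining relations of Definition~\ref{def:quantum walled Brauer algebra} together with the explicit formulas for $E_{j,k}$ and $T_{(j,k)}$; equivalently, each of them is the $q$-deformation of the corresponding diagrammatic identity in Lemma~\ref{lem:relations between horizontal lines and vertical lines}, and so each could also be read off from the walled-tangle calculus underlying the basis of \cite{Leduc}. The computation rests on two preliminary observations. First, from $\delta=(\rho-\rho^{-1})/(q-q^{-1})$ one gets the scalar identity $(q-q^{-1})\delta=\rho-\rho^{-1}$; combining it with $E_{r,r+1}^2=\delta E_{r,r+1}$, the substitution $S_i^{-1}=S_i-(q-q^{-1})$, and the defining relations $E_{r,r+1}S_{r\pm1}E_{r,r+1}=\rho E_{r,r+1}$, one obtains the ``curl--absorption'' identities $E_{r,r+1}S_{r-1}^{\pm1}E_{r,r+1}=E_{r,r+1}S_{r+1}^{\pm1}E_{r,r+1}=\rho^{\pm1}E_{r,r+1}$. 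Second, unwinding the definition of $E_{j,k}$ yields the conjugation formulas $E_{j,k}=S_j^{-1}E_{j+1,k}S_j^{-1}$ for $j<r$ and $E_{j,k}=S_{k-1}E_{j,k-1}S_{k-1}$ for $k>r+1$, hence in particular $E_{i,r+1}=(S_i^{-1}\cdots S_{r-1}^{-1})E_{r,r+1}(S_{r-1}^{-1}\cdots S_i^{-1})$ and $E_{r,k}=(S_{k-1}\cdots S_{r+1})E_{r,r+1}(S_{r+1}\cdots S_{k-1})$, while $T_{(j,k)}$ is the analogous product of braid generators.

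Granting these, (2) is immediate: $S_iE_{i,k}=S_i(S_i^{-1}E_{i+1,k}S_i^{-1})=E_{i+1,k}S_i^{-1}=E_{i+1,k}S_i-(q-q^{-1})E_{i+1,k}$, and the second equality of (2) is proved the same way. For (1) I would write $T_{(j,k)}$, resp.\ $E_{j,k}$, as $T_w$, resp.\ as $T_{w_1}E_{r,r+1}T_{w_2}$, where the outer factors realize permutations $w,w_1,w_2$ fixing the set $\{j,k\}$ (these factors may involve inverses, which is harmless); since $s_i$ then commutes with each of these permutations whenever $\{i,i+1\}\cap\{j,k\}=\emptyset$, the claim follows from the braid relations --- using the standard fact that $s_iw=ws_i$ forces $T_{s_i}T_w=T_wT_{s_i}$ in the Hecke algebra --- together with the far-commutation relation $E_{r,r+1}S_a=S_aE_{r,r+1}$ for $a\neq r-1,r+1$.

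For (4), in $E_{r,r+1}E_{i,r+1}$ I would slide $E_{r,r+1}$ to the right past the braid factors of $E_{i,r+1}$ involving only $S_i,\dots,S_{r-2}$ (which commute with it), reaching $(S_i^{-1}\cdots S_{r-2}^{-1})\,(E_{r,r+1}S_{r-1}^{-1}E_{r,r+1})\,(S_{r-1}^{-1}\cdots S_i^{-1})$; the curl--absorption identity extracts the scalar $\rho^{-1}$, and then sliding $(S_i^{-1}\cdots S_{r-2}^{-1})$ back through $E_{r,r+1}$ and applying braid relations identifies $(S_i^{-1}\cdots S_{r-2}^{-1})(S_{r-1}^{-1}\cdots S_i^{-1})$ with $T_{(i,r)}$, so that $E_{r,r+1}E_{i,r+1}=\rho^{-1}E_{r,r+1}T_{(i,r)}$. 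The other three identities of (4) are entirely parallel; the two involving $E_{r,k}$ use the defining relation $E_{r,r+1}S_{r+1}E_{r,r+1}=\rho E_{r,r+1}$ directly and therefore produce the scalar $\rho$ rather than $\rho^{-1}$.

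The main obstacle is (3). Since $j\neq r$ and $k\neq r+1$ force $j<r<r+1<k$, the arc $\{j,k\}$ encircles $\{r,r+1\}$, so diagrammatically the identity just says that nested caps and cups commute with no scalar correction; to derive it from the presentation, one pushes $E_{r,r+1}$ through the outer braid factors of $E_{j,k}$ and is left with the core element $Z:=E_{r,r+1}S_{r-1}^{-1}S_{r+1}E_{r,r+1}$, and the two remaining ``tangle'' defining relations say exactly that $ZS_{r-1}=ZS_{r+1}$ and $S_{r-1}Z=S_{r+1}Z$, which together with $[S_{r-1},S_{r+1}]=0$ give that $Z$ commutes with $S_{r-1}^{-1}S_{r+1}$ --- precisely what is needed to move $E_{r,r+1}$ back past the core and conclude $E_{r,r+1}E_{j,k}=E_{j,k}E_{r,r+1}$. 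The delicate part throughout is the bookkeeping: keeping straight which braid factors commute with $E_{r,r+1}$ and which must be transported by a curl--absorption or tangle relation, and tracking the $\rho^{\pm1}$ factors; alternatively, once one invokes the diagram basis of \cite{Leduc} and the rule for composing walled-tangle diagrams with their $q$- and $\delta$-weighted crossings and loops, all of (1)--(4) can be verified pictorially, and I would present whichever of the two routes turns out shorter.
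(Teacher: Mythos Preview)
Your overall approach --- verifying each identity directly from the presentation, using the curl--absorption identity $E_{r,r+1}S_{r\mp1}^{\pm1}E_{r,r+1}=\rho^{\pm1}E_{r,r+1}$ and the conjugation description of $E_{j,k}$ --- is exactly what the paper does, and your arguments for (2), (3) and (4) match the paper's almost line for line (in particular your treatment of (3) via the core element $Z=E_{r,r+1}S_{r-1}^{-1}S_{r+1}E_{r,r+1}$ and the tangle relations $ZS_{r-1}=ZS_{r+1}$, $S_{r-1}Z=S_{r+1}Z$ is precisely the paper's argument).

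There is, however, a genuine gap in your argument for the second relation in (1). You write $E_{j,k}=T_{w_1}E_{r,r+1}T_{w_2}$ and assert that $s_i$ commutes with each of $w_1,w_2$ whenever $\{i,i+1\}\cap\{j,k\}=\emptyset$, so that the Hecke--algebra fact ``$s_iw=ws_i\Rightarrow S_iT_w=T_wS_i$'' applies to each outer factor separately. But $w_1$ is a product of cycles with support $\{j,\ldots,r\}\cup\{r+1,\ldots,k\}$, not $\{j,k\}$; e.g.\ for $j=1$, $r=3$, $i=2$ the left $\sym_r$-part of $w_1$ is $s_1s_2$, and $s_2(s_1s_2)\neq(s_1s_2)s_2$. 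So $S_i$ does \emph{not} commute with $T_{w_1}$ or $T_{w_2}$ individually, and your inference ``$w_1$ fixes $\{j,k\}$, hence $s_i$ commutes with $w_1$'' is invalid. What is true is that $S_i$ commutes with the \emph{entire} product $T_{w_1}E_{r,r+1}T_{w_2}$, but proving this requires pushing $S_i^{-1}$ through both outer words using the braid relation $S_i^{-1}S_{i-1}^{-1}S_i^{-1}=S_{i-1}^{-1}S_i^{-1}S_{i-1}^{-1}$ (once on each side of $E_{r,k}$), together with $S_{i-1}E_{r,k}=E_{r,k}S_{i-1}$. The paper does exactly this explicit computation for $j+1\le i<r$ (and symmetrically for $r<i\le k-2$), and that is what your sketch for $S_iE_{j,k}=E_{j,k}S_i$ should be replaced by. Your argument for $S_iT_{(j,k)}=T_{(j,k)}S_i$ is fine as stated, since there $w=(j,k)$ genuinely commutes with $s_i$.
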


\Proof (1)The first relation can be checked from the defining relations
$S_i S_{i+1} S_i =S_{i+1} S_i S_{i+1}$ and $S_i S_j = S_j S_i  \ \ \ (|i-j|>1)$. %well-known. % WHERE?  
For the second relation, it is trivial when $i \le j-2$ or $k+1 \le i$.
Let $j+1 \le i <r$. Note that $S_{i-1} E_{r,k}=E_{r,k}S_{i-1}$. We have
\eqn
S_i^{-1} E_{j,k} && =S_i^{-1} S_j^{-1} \cdots  S_{r-1}^{-1} E_{r, k} S_{r-1}^{-1} \cdots S_j^{-1} \\
&& = S_j^{-1} \cdots S_i^{-1}S_{i-1}^{-1}S_i^{-1} \cdots  S_{r-1}^{-1} E_{r, k} S_{r-1}^{-1} \cdots S_j^{-1} \\
&&=S_j^{-1} \cdots S_{i-1}^{-1}S_{i}^{-1}S_{i-1}^{-1} \cdots  S_{r-1}^{-1} E_{r, k} S_{r-1}^{-1} \cdots S_j^{-1} \\
&&=S_j^{-1} \cdots  S_{r-1}^{-1} E_{r, k} S_{r-1}^{-1} \cdots  S_{i-1}^{-1} S_{i}^{-1} S_{i-1}^{-1} \cdots S_j^{-1} \\
&&=S_j^{-1} \cdots  S_{r-1}^{-1} E_{r, k} S_{r-1}^{-1} \cdots  S_{i}^{-1} S_{i-1}^{-1} S_{i}^{-1} \cdots S_j^{-1} \\
&& =S_j^{-1} \cdots  S_{r-1}^{-1} E_{r, k} S_{r-1}^{-1}  \cdots S_j^{-1} S_i^{-1} \\
&&=E_{j,k} S_i^{-1},
\eneqn
hence $E_{j,k} S_i =S_i E_{j,k}$. Similarly, we can obtain $S_i E_{j,k}=E_{j,k} S_i$ for $r< i \le k-2$.

(2) By the definition of $E_{i,k}$, we get
\eqn
S_i E_{i,k}= E_{i+1, k} S_i^{-1}=E_{i+1, k} (S_i -(q-q^{-1})\one)=E_{i+1, k}S_i-(q-q^{-1})E_{i+1, k}.
\eneqn
Similarly, we can obtain another relation.

(3) From the last two defining relations of $H_{r,s}^R(q,\rho)$, we get
$$E_{r,r+1} S_{r-1}^{-1} S_{r+1} E_{r,r+1} S_{r+1}S_{r-1}^{-1} = S_{r-1}^{-1}S_{r+1} E_{r,r+1} S_{r-1}^{-1} S_{r+1} E_{r,r+1}, $$
that is, $E_{r,r+1} E_{r-1, r+2} =E_{r-1,r+2} E_{r,r+1}.$
Using it and relations $S_i E_{r,r+1}=E_{r,r+1}S_i$ ($i \neq r-1, r+1$), we get the desired relations.

(4) From the defining relations of $H_{r,s}^R(q,\rho)$, we have
$\rho^{-1} E_{r,r+1} =E_{r,r+1} S_{r-1}^{-1} E_{r,r+1}. $
Using it and  relations $S_i E_{r,r+1}=E_{r,r+1}S_i$ ($i \neq r-1, r+1$), we  obtain the first relation as follows:
\eqn
E_{r,r+1} E_{i,r+1} && =E_{r,r+1} S_{i}^{-1} \cdots S_{r-1}^{-1} E_{r,r+1} S_{r-1}^{-1} \cdots S_i^{-1} \\
&& = S_{i}^{-1} \cdots S_{r-2}^{-1} E_{r,r+1} S_{r-1}^{-1}E_{r,r+1} S_{r-1}^{-1} \cdots S_i^{-1} \\
&& = \rho^{-1} S_{i}^{-1} \cdots S_{r-2}^{-1} E_{r,r+1} S_{r-1}^{-1} \cdots S_i^{-1} \\
&& = \rho^{-1} E_{r,r+1} T_{(i,r)}.
\eneqn
Similarly we can get the second relation.  Using $\rho E_{r,r+1} =E_{r,r+1} S_{r+1} E_{r,r+1}$, we obtain the third and the fourth relation.
\QED

\begin{prop} \label{prop:quantum commuting relations with generators}
We have the following relations:
\begin{enumerate}[\rm(1)]
\item  $S_i \mathcal L_{i+1} -\mathcal L_i S_i=1 -(q-q^{-1}) \mathcal{L}_i$ \ \ \ for $ \ i<r$, \\
  $\mathcal L_{i+1} S_i-S_i \mathcal L_i=1-(q-q^{-1}) \mathcal{L}_i$ \ \ \ for $\ i <r$,
 \item         $S_i \mathcal L_{i+1} -\mathcal L_i S_i=1 +(q-q^{-1}) \mathcal{L}_{i+1}$ for  $ \ i > r$,\\
        $\mathcal L_{i+1} S_i-S_i \mathcal L_i=1+(q-q^{-1}) \mathcal{L}_{i+1}$ for $ \  i >r$,
  \item $E_{r, r+1} (\mathcal L_r +\mathcal L_{r+1}) =(\mathcal L_r +\mathcal L_{r+1}) E_{r, r+1} =0,$
  \item $S_i \mathcal L_k = \mathcal L_k S_i$ for $k \neq i,i+1$,
  \item $E_{r, r+1} \mathcal L_k =\mathcal L_k E_{r,r+1}$ for $k \neq r, r+1$.
  \end{enumerate}
\end{prop}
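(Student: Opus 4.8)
The plan is to prove all five relations by induction on the rank $s$, using the inductive definition \eqref{def:quantum JM element} of the $\mathcal L_k$ together with Lemma \ref{lem:quantum relations between horizontal lines and vertical lines}, exactly in parallel with the proof of Proposition \ref{prop:commuting relations with generators} but being careful about the deformation terms $(q-q^{-1})$. The quasi-idempotency $E_{r,r+1}^2=\delta E_{r,r+1}$ and the quadratic relation $(S_i-q)(S_i+q^{-1})=0$, i.e. $S_i^{-1}=S_i-(q-q^{-1})$, will be invoked repeatedly.

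First I would prove (1) and (2) directly from the recursion. For $k=i+1\le r$, by definition $\mathcal L_{i+1}=S_i^{-1}\mathcal L_i S_i^{-1}+S_i^{-1}$, so $S_i\mathcal L_{i+1}=\mathcal L_i S_i^{-1}+1=\mathcal L_i(S_i-(q-q^{-1}))+1=\mathcal L_i S_i +1-(q-q^{-1})\mathcal L_i$, which is exactly the first identity in (1); the second follows by multiplying on the right by $S_i$ and simplifying with the quadratic relation, or symmetrically from the recursion written as $\mathcal L_{i+1}S_i=S_i^{-1}\mathcal L_i+1$. The case $i>r$ in (2) is handled the same way using the recursion $\mathcal L_{k}=S_{k-1}\mathcal L_{k-1}S_{k-1}+S_{k-1}$ and $S_{i}=S_i^{-1}+(q-q^{-1})$. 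Next, for (4) I would argue that $\mathcal L_k$ lies in the subalgebra generated by $S_1,\dots,S_{r-1},E_{r,r+1},S_{r+1},\dots,S_{k-1}$ and that it is a (non-commutative) polynomial in the $T_{(j,k)}$ and $E_{j,k}$ with $j<k$; then for $k\neq i,i+1$ the generator $S_i$ commutes with each $T_{(j,k)}$ and each $E_{j,k}$ by Lemma \ref{lem:quantum relations between horizontal lines and vertical lines}(1) when $i,i+1\notin\{j,k\}$, and when $i$ or $i+1$ equals some $j$ one uses part (2) of that lemma, which produces a term $E_{i+1,k}S_i$ plus a correction $-(q-q^{-1})E_{i+1,k}$; summing over $j$ the correction telescopes against the one coming from the neighboring index, exactly as in the classical computation in the proof of Proposition \ref{prop:commuting relations with generators}(4), leaving $S_i\mathcal L_k=\mathcal L_k S_i$. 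Relation (5) is proved the same way, using Lemma \ref{lem:quantum relations between horizontal lines and vertical lines}(3),(4): $E_{r,r+1}$ commutes with $E_{j,k}$ and $T_{(j,k)}$ except when $j=r$ or $k=r+1$, and in those exceptional cases parts (4) of the lemma convert $E_{r,r+1}E_{r,k}$ into $\rho\,E_{r,r+1}T_{(r+1,k)}$, etc., so that the sum telescopes and the $\rho$-factors match the recursion; the detail to watch is that in the sum $\mathcal L_k=\rho(\sum_{j\le r}-E_{j,k}+\delta)+\rho^2\sum_{r<j<k}T_{(j,k)}$ the single term $E_{r,k}$ gets absorbed and re-emerges as $\rho\cdot\rho\,T_{(r+1,k)}$, matching the coefficient $\rho^2$ on the $T$-side.

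Finally, for (3) I would compute $E_{r,r+1}(\mathcal L_r+\mathcal L_{r+1})$ using the explicit formulas $\mathcal L_r=\sum_{j=1}^{r-1}T_{(j,r)}$ and $\mathcal L_{r+1}=\rho(\sum_{j=1}^{r}-E_{j,r+1}+\delta)$: the $j=r$ term in $\mathcal L_{r+1}$ is $-\rho E_{r,r+1}$, and $E_{r,r+1}\cdot(-\rho E_{r,r+1})=-\rho\delta E_{r,r+1}$, which cancels $E_{r,r+1}\cdot\rho\delta$; for $1\le j\le r-1$, Lemma \ref{lem:quantum relations between horizontal lines and vertical lines}(4) gives $E_{r,r+1}E_{j,r+1}=\rho^{-1}E_{r,r+1}T_{(j,r)}$, so $E_{r,r+1}\cdot(-\rho E_{j,r+1})=-E_{r,r+1}T_{(j,r)}$, which cancels $E_{r,r+1}T_{(j,r)}$ coming from $E_{r,r+1}\mathcal L_r$; hence the total is $0$. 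The identity $(\mathcal L_r+\mathcal L_{r+1})E_{r,r+1}=0$ is symmetric, using the right-handed versions in Lemma \ref{lem:quantum relations between horizontal lines and vertical lines}(1),(4). I expect the main obstacle to be bookkeeping: keeping track of the deformation corrections $(q-q^{-1})$ and the powers of $\rho$ so that everything telescopes and the coefficients in the recursion \eqref{def:quantum JM element} come out exactly right; once the correct normalizations of $\mathcal L_{r+1}$ (the factor $\rho$ and the $+\delta$) and of $E_{j,k}$ are in place, each identity reduces to the corresponding classical one of Proposition \ref{prop:commuting relations with generators} plus a clean cancellation.
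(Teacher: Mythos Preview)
Your proposal is correct and follows essentially the same approach as the paper's proof: parts (1)--(2) come directly from the recursion \eqref{def:quantum JM element} together with $S_i^{-1}=S_i-(q-q^{-1})$; part (3) uses the explicit formulas for $\mathcal L_r,\mathcal L_{r+1}$ and Lemma~\ref{lem:quantum relations between horizontal lines and vertical lines}(4) plus $E_{r,r+1}^2=\delta E_{r,r+1}$; and parts (4)--(5) use the explicit expansion of $\mathcal L_k$ together with Lemma~\ref{lem:quantum relations between horizontal lines and vertical lines}. Two minor remarks on wording: your opening ``induction on the rank $s$'' is not actually what you carry out (nor what is needed); and in (4) the cancellation is not a telescope over all $j$ but a pairwise cancellation at the two indices $j=i,i+1$ only, since $S_i$ already commutes with $T_{(j,k)}$ and $E_{j,k}$ for $j\ne i,i+1$ by Lemma~\ref{lem:quantum relations between horizontal lines and vertical lines}(1). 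Likewise in (5) the exceptional indices are $j=r$ (in the $E$-sum) and $j=r+1$ (in the $T$-sum), not ``$k=r+1$'', and the key point is that the combination $-\rho E_{r,k}+\rho^2 T_{(r+1,k)}$ is annihilated by $E_{r,r+1}$ on both sides via Lemma~\ref{lem:quantum relations between horizontal lines and vertical lines}(4), exactly as you indicate.
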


\Proof
(1),(2) The relations can be checked from the definition of $\mathcal L_i$ in \eqref{def:quantum JM element} and the defining relations  $(S_i-q)(S_i+q^{-1})=0$. The relations in (1) can be also obtained from \cite[Lemma 2.3(ii),(iii)]{DJ}.

(3) From Lemma \ref{lem:quantum relations between horizontal lines and vertical lines} (4) and $E_{r,r+1}^2=\delta E_{r,r+1}$, we get
\eqn
E_{r,r+1}(\mathcal L_r + \mathcal L_{r+1}) && = E_{r,r+1} \big(\sum_{j=1}^{r-1} T_{(j,r)} + \rho \sum_{j=1}^r -E_{j,r+1} +\rho \delta \big)\\
&&= \sum_{j=1}^{r-1} E_{r,r+1}T_{(j,r)} - E_{r,r+1}T_{(j,r)}-\rho \delta E_{r,r+1} +\rho \delta E_{r,r+1}=0.
\eneqn
By similar manner, we can obtain the second relation.

(4) The case $i \ge k+1$ is trivial. Let $i < k-1 < k \le r$. Then one can check that
\begin{eqnarray} \label{eq:S_i and T_{(i,k)}}
S_i (T_{(i,k)}+T_{(i+1,k)}) =(T_{(i,k)}+T_{(i+1,k)})S_i.
\end{eqnarray}
Here, we use the relations
$$ S_i T_{(i,k)} =T_{(i+1,k)} S_i - (q-q^{-1}) T_{(i+1,k)}, \ T_{(i,k)} S_i =S_i T_{(i+1,k)} -(q-q^{-1}) T_{(i+1,k)},$$
which can be obtained from the defining relations of $H_{r,s}^R(q,\rho)$.
Using \eqref{eq:S_i and T_{(i,k)}} and Lemma \ref{lem:quantum relations between horizontal lines and vertical lines} (1),
we obtain $S_i \mathcal{L}_k =\mathcal{L}_k S_i$.

Let $i < r < k$.
By Lemma \ref{lem:quantum relations between horizontal lines and vertical lines} (2),
we have $S_i(E_{i,k} +E_{i+1,k}) =(E_{i,k} +E_{i+1,k})S_i.$
From it and Lemma \ref{lem:quantum relations between horizontal lines and vertical lines} (1),
we obtain the desired result.
For $r<i<k-1$, we can get the same relation as \eqref{eq:S_i and T_{(i,k)}}
from the relations
$$S_i T_{(i,k)}=T_{(i+1,k)}S_i + (q-q^{-1}) T_{(i,k)}, \  T_{(i,k)}S_i = S_i T_{(i+1,k)} +(q-q^{-1}) T_{(i,k)}.$$
From it and Lemma \ref{lem:quantum relations between horizontal lines and vertical lines} (1),
we can obtain $S_i \mathcal{L}_k =\mathcal{L}_k S_i$, as desired.

(5) If $k <r$, it is trivial.
Let $k \ge r+2$.
From Lemma \ref{lem:quantum relations between horizontal lines and vertical lines} (1),(3) and (4),
we can obtain $E_{r,r+1}\mathcal L_k =E_{r,r+1} \mathcal L_k$.
\QED

\begin{corollary} \label{cor:belong to center for quantum case}
The elements $\mathcal L_k$'s are commuting  with each other and
every supersymmetric polynomials in $\mathcal L_1, \ldots, \mathcal L_{r+s}$ belong to the center
$Z(H_{r,s}^R(q,\rho))$ of $H_{r,s}^R(q,\rho)$.
\end{corollary}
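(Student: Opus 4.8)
The plan is to transcribe, essentially verbatim, the chain of arguments proving Propositions~\ref{prop:Li are commuting} and~\ref{prop:belongtocenter} and Corollary~\ref{cor:centerssym} in the classical setting, now feeding in Proposition~\ref{prop:quantum commuting relations with generators} in place of Proposition~\ref{prop:commuting relations with generators}. The only genuinely new point is to track the extra terms $\pm(q-q^{-1})\mathcal L_{\bullet}$ occurring in the quantum commutation relations and to check that they drop out.

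For commutativity I would argue as in the proof of Proposition~\ref{prop:Li are commuting}. The elements $\mathcal L_1,\dots,\mathcal L_r$ are the usual Jucys--Murphy elements of the Hecke algebra $H_r(q)$ and commute by~\cite{DJ}. For $1\le a\le s$ let $H_{r+a}$ be the subalgebra generated by $S_1,\dots,S_{r-1},E_{r,r+1},S_{r+1},\dots,S_{r+a-1}$, and $H_r=\langle S_1,\dots,S_{r-1}\rangle$; from the explicit formula for $\mathcal L_{r+1}$ and the recursion~\eqref{def:quantum JM element} one sees $\mathcal L_1,\dots,\mathcal L_{r+a}\in H_{r+a}$. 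By Proposition~\ref{prop:quantum commuting relations with generators}(4),(5) the element $\mathcal L_{r+a}$ commutes with every generator of $H_{r+a-1}$, hence with $\mathcal L_1,\dots,\mathcal L_{r+a-1}$; induction on $a$ then gives that all the $\mathcal L_k$ commute.

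For the power sums, fix $k\ge 0$ and put $P_k:=\mathcal L_1^k+\cdots+\mathcal L_r^k+(-1)^{k+1}(\mathcal L_{r+1}^k+\cdots+\mathcal L_{r+s}^k)=p_k(\mathcal L_1,\dots,\mathcal L_{r+s})$, and show $P_k$ commutes with each generator. Since $\mathcal L_r,\mathcal L_{r+1}$ commute and $E_{r,r+1}(\mathcal L_r+\mathcal L_{r+1})=0$ by Proposition~\ref{prop:quantum commuting relations with generators}(3), an easy induction (push $\mathcal L_{r+1}$ past powers of $\mathcal L_r$) yields $E_{r,r+1}\mathcal L_r^k=(-1)^kE_{r,r+1}\mathcal L_{r+1}^k$, so $E_{r,r+1}(\mathcal L_r^k+(-1)^{k+1}\mathcal L_{r+1}^k)=0$ and likewise on the right; together with Proposition~\ref{prop:quantum commuting relations with generators}(5) this shows $E_{r,r+1}$ commutes with $P_k$. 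For $S_i$ with $i\neq r$, Proposition~\ref{prop:quantum commuting relations with generators}(4) shows $S_i$ commutes with $\mathcal L_j^k$ for $j\neq i,i+1$, so it suffices to see that $S_i$ commutes with $\mathcal L_i^k+\mathcal L_{i+1}^k$; using Proposition~\ref{prop:quantum commuting relations with generators}(1) (for $i<r$) or~(2) (for $i>r$) together with commutativity of $\mathcal L_i,\mathcal L_{i+1}$, one checks directly that $S_i$ commutes with both $\mathcal L_i+\mathcal L_{i+1}$ and $\mathcal L_i\mathcal L_{i+1}$ --- here the correction terms $\pm(q-q^{-1})\mathcal L_i$, resp.\ $\pm(q-q^{-1})\mathcal L_{i+1}$, occur symmetrically on the two sides and cancel --- hence $S_i$ commutes with every symmetric polynomial in $\mathcal L_i,\mathcal L_{i+1}$. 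Thus $P_k\in Z(H^R_{r,s}(q,\rho))$.

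Finally, by Stembridge's theorem~\cite{Stem} the ring of supersymmetric polynomials is generated by the power sums $p_k$, $k\ge 0$; since each $p_k(\mathcal L_1,\dots,\mathcal L_{r+s})=P_k$ is central and the $\mathcal L_k$ commute, every supersymmetric polynomial in $\mathcal L_1,\dots,\mathcal L_{r+s}$ is a polynomial in the $P_k$ and hence lies in $Z(H^R_{r,s}(q,\rho))$. I expect the only real obstacle to be the bookkeeping in the $S_i$-case, namely verifying that the $(q-q^{-1})$-corrections cancel when checking that $S_i$ commutes with $\mathcal L_i+\mathcal L_{i+1}$ and $\mathcal L_i\mathcal L_{i+1}$; everything else is a routine quantum rewriting of the classical arguments.
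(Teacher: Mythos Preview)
Your proposal is correct and follows essentially the same approach as the paper, which simply states that the results follow by the same proofs as for Propositions~\ref{prop:Li are commuting} and~\ref{prop:belongtocenter} after replacing $s_i,e_{r,r+1}$ by $S_i,E_{r,r+1}$. You have in fact supplied more detail than the paper does, in particular the verification that the extra $(q-q^{-1})$-terms cancel when checking that $S_i$ commutes with $\mathcal L_i+\mathcal L_{i+1}$ and $\mathcal L_i\mathcal L_{i+1}$, which the paper leaves implicit.
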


\Proof
Replacing $(i,i+1)=s_i, e_{r,r+1}$ with $S_i, E_{r,r+1}$, respectively, we can obtain the results by the same proofs as for Proposition \ref{prop:Li are commuting} and \ref{prop:belongtocenter}.
\QED

\begin{remark} \label{rem:quantum JM element}
 (1) The fact that $\mathcal{L}_1 + \cdots + \mathcal{L}_{r+s}$ belongs to $Z(H_{r,s}^R(q,\rho))$ was also shown in \cite[Proposition 2.5]{RS}.
  Indeed, the element $c_{r,s}$ in \cite[Proposition 2.5]{RS} is $-\rho^{-1}(\mathcal{L}_1 + \cdots + \mathcal{L}_{r+s}) +s \delta$. 
%%%%%%%%%%%%%%%%%%%%%%%%%%%%%%%%%%%%%%%%%%%%%%%%%%%%%%%%%%%%%%%%%%%%%%%%%%%%%%%%%%%%%%%%%%%%%%%%%%%%%%%%
(2) Set $\Lambda:=\Z[q^{\pm1},\rho^{\pm1}]_{(q-q^{-1})}$,  the localization of the Laurent  polynomial ring in $q$ and  $\rho$ at $(q-q^{-1})$.
  Then the  above corollary was shown in \cite[Theorem 1]{Morton} by a skein theoretic description of $H_{r,s}^\Lambda (q,\rho)$.
Let us explain it  more precisely.
%In \cite{Morton},
Hugh Morton studied  a relation between the framed HOMFLY skein module on the annulus and 
the center of the framed HOMFLY skein module on the rectangle with designated inputs and outputs boundary points. 
The latter skein module with a naturally defined multiplication is isomorphic to $H_{r,s}^\Lambda (q,\rho)$,
where the parameters $s$ and $v$ used there correspond to $q^{-1}$ and $\rho$, respectively.
The diagrams corresponding to the generators are:

\newcommand{\ul}{\mathbin{\rotatebox[origin=c]{45}{$\blacktriangle$}}}
\newcommand{\ur}{\mathbin{\rotatebox[origin=c]{315}{$\blacktriangle$}}}
\newcommand{\dl}{\mathbin{\rotatebox[origin=c]{135}{$\blacktriangle$}}}
\newcommand{\dr}{\mathbin{\rotatebox[origin=c]{225}{$\blacktriangle$}}}
${\beginpicture
\setcoordinatesystem units <0.78cm,0.39cm>
\setplotarea x from 0 to 9, y from -1 to 4
\put{$S_i= $} at -0.2 1.5

\plot 0.5 0 0.5 3 /
\plot 0.5 3 8.5 3 /
\plot 8.5 3 8.5 0 /
\plot 8.5 0 0.5 0 /

\put{} at  1 0  \put{$\blacktriangle$} at  1 3
\put{} at  2 0  \put{$\blacktriangle$} at  2 3
\put{} at  3 0  \put{$\ul$} at  3 3
\put{} at  4 0  \put{$\ur$} at  4 3
\put{$\blacktriangle$} at  5 3  %\put{$\bullet$} at  5 3
\put{$\blacktriangle$} at  6 3  %\put{$\blacktriangle$} at  6 3
\put{$\blacktriangledown$} at  7 0  %\put{$\blacktriangle$} at  7 3
\put{$\blacktriangledown$} at  8 0  %\put{$\blacktriangle$} at  8 3
\put{$\cdots$} at 1.5 1.5
\put{$\cdots$} at 5.5 1.5
\put{$\cdots$} at 7.5 1.5
\put{{\scriptsize$i$}} at 3 4
\put{{\scriptsize$i+1$}} at 4 4
\put{,} at 8.8 1
\plot 1 3 1 0 /
\plot 2 3 2 0 /
\plot 3 3 4 0 /
%\plot 4 3 3 0 /
\plot 3 0 3.4 1.2 /
\plot  3.6 1.8 4 3 /
\plot 5 3 5 0 /
\plot 6 3 6 0 /
\plot 7 3 7 0 /
\plot 8 3 8 0 /
\setdashes  <.4mm,1mm>
\plot 6.5 -1   6.5 4 /
\setsolid
\endpicture}$
${\beginpicture \setcoordinatesystem units <0.78cm,0.39cm>
\setplotarea x from 0 to 9, y from -1 to 4
\put{$S_j= $} at -0.3 1.5

\plot 0.5 0 0.5 3 /
\plot 0.5 3 8.5 3 /
\plot 8.5 3 8.5 0 /
\plot 8.5 0 0.5 0 /

%\put{$\bullet$} at  1 0
\put{$\blacktriangle$} at  1 3
%\put{$\bullet$} at 2 0
 \put{$\blacktriangle$} at  2 3
 \put{$\blacktriangledown$} at  3 0
%  \put{$\bullet$} at 3 3
\put{$\blacktriangledown$} at  4 0
%\put{$\bullet$} at 4 3
\put{$\dl$} at  5 0
%\put{$\bullet$} at  5 3
\put{$\dr$} at 6 0
%\put{$\bullet$} at  6 3
 \put{$\blacktriangledown$} at  7 0
 %\put{$\bullet$} at 7 3
\put{$\blacktriangledown$} at  8 0
 %\put{$\bullet$} at 8 3

\put{$\cdots$} at 1.5 1.5
\put{$\cdots$} at 3.5 1.5
\put{$\cdots$} at 7.5 1.5
\put{,} at 8.8 1
\put{{\scriptsize$j$}} at 5 4
\put{{\scriptsize$j+1$}} at 6 4

\plot 1 3 1 0 /
\plot 2 3 2 0 /
\plot 3 3 3 0 /
\plot 4 3 4 0 /
\plot 5 3 6 0 /
%\plot 6 3 5 0 /
\plot 7 3 7 0 /
\plot 8 3 8 0 /

\plot 5 0 5.4  1.2 /
\plot 5.6 1.8  6 3 /

\setdashes  <.4mm,1mm>
\plot 2.5 -1   2.5 4 /
\setsolid
\endpicture}$

\begin{center}
${\beginpicture
\setcoordinatesystem units <0.78cm,0.39cm>
%\setplotarea x from 0 to 6, y from -2 to 3
\put{$E_{r,r+1} = $} at -0.7 1.5
\plot 0.5 0 0.5 3 /
\plot 0.5 3 8.5 3 /
\plot 8.5 3 8.5 0 /
\plot 8.5 0 0.5 0 /
%\put{$\bullet$} at  1 0 
 \put{$\blacktriangle$} at  1 3
%\put{$\bullet$} at  3 0  
\put{$\blacktriangle$} at  3 3
%\put{$\dl$} at  4 0  
\put{$\ul$} at  4 3
\put{$\dr$} at  5 0 
 %\put{$\bullet$} at  5 3
\put{$\blacktriangledown$} at  6 0  
%\put{$\bullet$} at  6 3
\put{$\blacktriangledown$} at  8 0  
%\put{$\bullet$} at  8 3

\put{$\cdots$} at 2 1.5
\put{$\cdots$} at 7 1.5
\put{.} at 8.8 1
\put{{\scriptsize $1$}} at 1 4
\put{{\scriptsize $r$}} at 4 4
\put{{\scriptsize $r+1$}} at 5 4
\put{{\scriptsize $r+s$}} at 8 4
\plot 1 3 1 0 /
\plot 3 3 3 0 /
\plot 8 3 8 0 /
\plot 6 3 6 0 /
%\arrow <3 pt> [1,2] from 2 0 to  2 1.7
%\arrow <3 pt> [1,2] from 1.45 0.75 to  1.4 0.75
%\arrow <3 pt> [1,2] from 4 0 to 3.5 1.5
\setdashes  <.4mm,1mm>
\plot 4.5 -1   4.5 4 /
\setsolid
\setquadratic
\plot 4 3 4.5 2 5 3 /
\plot 4 0 4.5 1 5 0  /
\endpicture}$
\end{center}
Morton  introduced  certain elements $T(j), U(k)$, which have the relations with $\mathcal L_i$'s as follows: 
\eqn
&&T(j)  =  -(q-q^{-1}) \mathcal L_j+1 \quad (1\le j\le r), \\
&&U(k)  =  \rho^{-2} (q-q^{-1}) \mathcal L_{ r+k } + \rho^{-2} \quad (1 \le k \le s ).
\eneqn
The elements $T(j)$ and $U(k)$, which is called \emph{Murphy operators} in \cite{Morton}, correspond to simple braid diagrams in the skein theoretic description. See  \cite[Section 3]{Morton} for the diagrammatic presentation of $T(j)$ and $U(k)$.
It was shown that the elements of the form
$$\sum_{j=1}^r (\rho^{-1} \, T(j))^m - \sum_{k=1}^s (\rho \, U(k))^m \quad (m \in \Z_{\ge0})$$
belong to the center of $H_{r,s}^\Lambda (q,\rho)$. Moreover it was conjectured that the above elements generate the center of $H_{r,s}^\Lambda (q,\rho)$.
Note that it is equivalent to saying that the supersymmetric polynomials in $\mathcal L_i$'s  generate the center of $H_{r,s}^\Lambda (q,\rho)$.

 (3)  Recently A. M. Semikhatov and I. Y. Tipunin introduced  some elements $\{J(r)_i\, ;\, 1 \le i \le r+s\}$ having similar diagrammatic presentations to $T(j)$ and $U(k)$, which are also called \emph{Jucys-Murphy elements} in \cite[Section 2.4]{ST}. In particular, $J(r)_{r+k}$ coincides with $U(k)$ for $1\le k \le s$ under an isomorphism between  their algebra $\mathsf{qw} \mathcal B_{r,s}$ and $H_{r,s}^{\C(q,\rho)}(q,\rho)$.
% \ber Question: Are there $a,b,c \in R$ such that $E_{r,r+1}(a \mathcal L'_r +b +c \mathcal L_{r+1})=(a \mathcal L'_r +b +c \mathcal L_{r+1}) E_{r,r+1}$, where $\mathcal L'_r$ is the element obtained by the same way as $\mathcal L$ but replacing $S_i^{-1}$ to $S_i$. We expect a negative answer. \er
\end{remark}
%%%%%%%%%%%%%%%%%%%%%%%%%%%%%%%%%%%%%%%%%%%%%%%%%%%%%%%%%%%%%%%%%%%%%%%%%%%%%%%%%%%%%

We set $H_{r,s}(q,\rho):=H_{r,s}^{\C(q,\rho)}(q,\rho)$ and
$H_{r,s}(N):=H_{r,s}^{\C(q)}(q,q^N)$ for $N \in \Z_\ge0$, respectively.
The latter algebra appeared in  \cite{KM2}   for the first time.
Note that $H_{r,s}^{\C(q,\rho)}(q,\rho)$ is semisimple by, for example, \cite[Theorem 6.10]{RS}. %\ber (CHECK \cite{Leduc}).\er 
%I WONDER WHETHER LEDUC DEALT WITH THE ALGEBRAS OVER AN ARBITRARY RING OR ONLY THE RATIONAL FUNCTION FIELD
We  call these two families  the \emph{quantized walled Brauer algebras}.
%The representation theory of those algebras has been understood well:
%there is a criterion of semisimplicity of $H_{r,s}(N)$ and a complete list of the isomorphism classes of simple modules  is  known.
For the representation theory of these algebras, see, for example, \cite{DDS, Enyang, RS} and references therein.
In particular, the set $\dot{\Lambda}_{r,s}$ parametrizes the isomorphism classes of simple modules of $H_{r,s}(q,\rho)$ and $H_{r,s}(N)$ as well as the ones of $B_{r,s}(\delta)$.
Thus we know that
$$\dim_{\C(q,\rho)}Z(H_{r,s}(q,\rho))  =\dim_{\C}Z(B_{r,s}(\delta)),$$
where $\delta$ is generic.
If  $H_{r,s}(N)$ are semisimple, then
$$\dim_{\C}Z(B_{r,s}(N))= \dim_{\C(q)}Z(H_{r,s}(N)),$$
since $B_{r,s}(N)$ is also semisimple by  comparing  \cite[Theorem 6.10]{RS} with Proposition \ref{prop:semisimplicity}.

\begin{theorem} \label{thm:main quantum}
Suppose that  $H_{r,s}(N)$ are semisimple.
Then the center $Z(H_{r,s}(N))$
of $H_{r,s}(N)$ is generated by the supersymmetric polynomials in the Jucys-Murphy elements $\mathcal L_1, \ldots, \mathcal L_{r+s}$ with coefficients in $\C(q)$.

The center $Z(H_{r,s}(q,\rho))$  of $H_{r,s}(q,\rho)$ is also generated by the supersymmetric polynomials in the Jucys-Murphy elements $\mathcal L_1, \ldots, \mathcal L_{r+s}$ with coefficients in $\C(q,\rho)$.
\end{theorem}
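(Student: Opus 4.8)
The plan is to obtain both assertions from Theorem~\ref{thm:main} by a flat degeneration argument, applied twice. Abstractly, suppose $R$ is a principal ideal domain with fraction field $K$ and a maximal ideal $\mathfrak m$ with residue field $k_{0}:=R/\mathfrak m$, and that $q,\rho,\delta$ can be chosen in $R$ so that $H^{R}_{r,s}(q,\rho)$ is defined; it is then free of rank $(r+s)!$ over $R$ with the standard monomial basis, and I put $A_{K}:=H^{R}_{r,s}(q,\rho)\otimes_{R}K$ (the ``generic'' algebra of interest) and $A_{0}:=H^{R}_{r,s}(q,\rho)\otimes_{R}k_{0}$ (a ``special'' one). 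The elements $\mathcal L_{1},\dots,\mathcal L_{r+s}$ lie in $H^{R}_{r,s}(q,\rho)$ (their defining formulas involve only $\rho$, $\delta$, $S_{i}^{\pm1}=S_{i}$ or $S_{i}-(q-q^{-1})\one$, and $E_{r,r+1}$), so $p(\mathcal L_{1},\dots,\mathcal L_{r+s})\in H^{R}_{r,s}(q,\rho)$ for every supersymmetric $p\in S_{r,s}[x;y]$. Let $P_{R}$ be the $R$-submodule spanned by these elements; since $R$ is a PID and $H^{R}_{r,s}(q,\rho)$ is finitely generated free over $R$, $P_{R}$ is free of some rank $d$. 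Its image in $A_{K}$ spans a $K$-subspace of dimension exactly $d$ (flatness of $K$ over $R$), which by Corollary~\ref{cor:belong to center for quantum case} lies in $Z(A_{K})$; its image in $A_{0}$ spans a $k_{0}$-subspace of dimension at most $d$. Hence, \emph{provided} (i) $\mathcal L_{k}$ specializes to the Jucys--Murphy element of $A_{0}$ for each $k$ (so $P_{R}\otimes_{R}k_{0}$ maps onto the span of the supersymmetric polynomials in the Jucys--Murphy elements of $A_{0}$), (ii) that span equals $Z(A_{0})$, and (iii) $\dim_{k_{0}}Z(A_{0})=\dim_{K}Z(A_{K})$, one gets the chain
$$\dim Z(A_{0})\ \le\ d\ =\ \dim(\text{span in }A_{K})\ \le\ \dim Z(A_{K})\ =\ \dim Z(A_{0}),$$
forcing equality throughout, so the supersymmetric polynomials in $\mathcal L_{1},\dots,\mathcal L_{r+s}$ generate $Z(A_{K})$.

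For the first assertion I would take $R=\C[q^{\pm1}]_{(q-1)}$ (a discrete valuation ring with $K=\C(q)$ and $k_{0}=\C$ via $q\mapsto1$), $\rho=q^{N}$ and $\delta=[N]_{q}$, so that $A_{K}=H_{r,s}(N)$ and $A_{0}=B_{r,s}(N)$: at $q=1$ one has $\rho=1$, $\delta=[N]_{1}=N$, and the relations of Definition~\ref{def:quantum walled Brauer algebra} degenerate to those of the classical walled Brauer algebra, the identification following from equality of dimensions (see \cite{KM2,Leduc}). A direct computation from Definitions~\ref{def:JM element} and~\ref{def:quantum JM} shows that under $q\mapsto1$ one has $S_{i}^{\pm1}\mapsto s_{i}$, $E_{j,k}\mapsto e_{j,k}$ and $T_{(a,b)}\mapsto(a,b)$, hence $\mathcal L_{k}\mapsto L_{k}$ for all $k$, which is (i). Since $H_{r,s}(N)$ is semisimple, so is $B_{r,s}(N)$, so (ii) is exactly Theorem~\ref{thm:main}, and (iii) holds because for a semisimple algebra the dimension of the center is the number of simple modules, which is $|\dot{\Lambda}_{r,s}|$ for both $H_{r,s}(N)$ and $B_{r,s}(N)$.

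For the second assertion I would take $R=\C(q)[\rho^{\pm1}]$ (a PID with $K=\C(q,\rho)$ and, via the maximal ideal $(\rho-q^{N})$, residue field $k_{0}=\C(q)$ with $\rho\mapsto q^{N}$) and $\delta=\tfrac{\rho-\rho^{-1}}{q-q^{-1}}\in R$, where $N\geq1$ is chosen so that $H_{r,s}(N)$ is semisimple (e.g.\ $N>r+s-2$). Then $A_{K}=H_{r,s}(q,\rho)$ and $A_{0}=H_{r,s}(N)$ because $\delta\mapsto[N]_{q}$, and $\mathcal L_{k}$ manifestly specializes to the Jucys--Murphy element of $H_{r,s}(N)$, giving (i). Now (ii) is precisely the first assertion, and (iii) holds because $\dim_{\C(q,\rho)}Z(H_{r,s}(q,\rho))=\dim_{\C}Z(B_{r,s}(\delta))=|\Lambda_{r,s}|$ for generic $\delta$, whereas $\dim_{\C(q)}Z(H_{r,s}(N))=|\dot{\Lambda}_{r,s}|=|\Lambda_{r,s}|$ since $\delta=[N]_{q}\neq0$ when $N\geq1$.

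The step I expect to be the main obstacle is the input for the first assertion that $H^{R}_{r,s}(q,q^{N})\otimes_{R}\C\cong B_{r,s}(N)$ at $q=1$: one must verify that the defining relations of the quantized walled Brauer algebra specialize precisely to those presenting $B_{r,s}(N)$ — equivalently, that the standard monomial basis reduces to the diagram basis of $B_{r,s}(N)$ — together with the attendant bookkeeping $\mathcal L_{k}\mapsto L_{k}$. Once this flat-degeneration picture is in place, the remaining ingredients (the PID structure of $P_{R}$, the two dimension counts, and Corollary~\ref{cor:belong to center for quantum case}) are routine.
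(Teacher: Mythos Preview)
Your proposal is correct and follows essentially the same route as the paper. The paper's proof is a concrete instance of your flat-degeneration scheme: it fixes $d=|\dot\Lambda_{r,s}|$ supersymmetric polynomials $P_1,\dots,P_d$ whose evaluations at the $L_k$ form a basis of $Z(B_{r,s}(N))$, and then shows that the $P_i(\mathcal L_1,\dots,\mathcal L_{r+s})$ remain linearly independent in $H_{r,s}(N)$ (and afterwards in $H_{r,s}(q,\rho)$) by the standard ``clear denominators, divide out a power of the uniformizer, then specialize'' contradiction argument---which is exactly the pedestrian form of your PID rank comparison for $P_R$. The two specializations you use ($q\mapsto 1$ via $R=\C[q^{\pm1}]_{(q-1)}$ and $\rho\mapsto q^N$ via $R=\C(q)[\rho^{\pm1}]$), the bookkeeping $\mathcal L_k\mapsto L_k$, and the semisimplicity/dimension count $|\dot\Lambda_{r,s}|$ are identical to the paper's; your explicit choice of $N>r+s-2$ to guarantee semisimplicity for the second step is a detail the paper leaves implicit.
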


\begin{proof}
%Note that $B_{r,s}(N)$ is also semisimple \ber(compare  \cite[Theorem 6.10]{RS} with Proposition \ref{prop:semisimplicity}) \er .
Set $d=\dim Z(B_{r,s}(N))=|\dot\Lambda_{r,s}|$.
Let $$\set{P_i \in S_{r,s}[x;y]}{1\le i \le d}$$ be a set of supersymmetric polynomials such that
$\set{P_i(L_1,\ldots,L_{r+s}) \in B_{r,s}(N)}{1\le i \le d}$ forms a basis of $Z(B_{r,s}(N))$.
We will show first that
$$\set{P_i(\mathcal L_1,\ldots,\mathcal L_{r+s}) \in H_{r,s}(N)}{1\le i \le d}$$
is a linearly independent subset of $H_{r,s}(N)$ over $\C(q)$.
Assume that there exists
 %$a_i(q) \in \C(q)$ $(i=1,\ldots,d)$ such that at least one of them is nonzero  and
 a nontrival $\C(q)$-linear relation
$$\sum_{i=1}^d a_i(q) P_i(\mathcal L_1,\ldots,\mathcal L_{r+s})=0.$$
Then by multiplying the least common multiple of the denominators, we may assume that
$a_i(q) \in \C[q]$ for all $i$.
Dividing the above by a suitable power of $q-1$, we may further assume that
there exists $i_0$ such that $a_{i_0}(1)\neq 0$.
Now let us denote by $H_{\C[q^{\pm 1}]}$ the $\C[q^{\pm1}]$-subalgebra of $H_{r,s}(N)$ generated by $E_{r,r+1}$ and $S_i$'s. Recall that $H_{\C[q^{\pm 1}]}$ is isomorphic to $H_{r,s}^{\C[q^{\pm1}]}(q, q^N)$.
Then we have a ring homomorphism
$\phi_{q=1} : \C[q^{\pm1}] \to \C$ and $\C[q^{\pm1}]$-algebra homomorphism
$$
\widetilde \phi_{q=1} : H_{\C[q^{\pm1}]} \to B_{r,s}(N), \quad
E_{r,r+1} \mapsto E_{r,r+1},  \ S_i \mapsto (i,i+1),
$$
which can be justified by checking the defining relations. %of $H_{r,s}^{\C[q^{\pm1}]}(q, q^N)$.
For example, we have
$$\widetilde \phi_{q=1}(E_{r,r+1}^2-\delta E_{r,r+1}) = E_{r,r+1}^2-\phi_{q=1}\Big(\frac{q^N-q^{-N}}{q-q^{-1}}\Big) E_{r,r+1}=E_{r,r+1}^2-N E_{r,r+1}=0.$$
Note that
$\mathcal L_k \in H_{\C[q^{\pm1}]}$  and
$\phi_{q=1}(\mathcal L_k)=L_k$ for all $1\le k\le r+s$.
Hence we have
\begin{equation*}
\begin{aligned}
0&=\widetilde\phi_{q=1}\Big(\sum_{i=1}^d a_i(q) P_i(\mathcal L_1,\ldots,\mathcal L_{r+s}) \Big)
%=\sum_{i=1}^d a_i(1)  P_i \big(\phi_{q=1}(\mathcal L_1),\ldots,\phi_{q=1}(\mathcal L_{r+s}) \big)\\ &
=\sum_{i=1}^d a_i(1)  P_i \big(L_1,\ldots,L_{r+s} \big).
\end{aligned}
\end{equation*}
It follows that  $a_i(1)=0$ for all $1\le i\le d$, which yields a contradiction since
$a_{i_0}(1)\neq 0$.
Thus $\set{P_i(\mathcal L_1,\ldots,\mathcal L_{r+s}) \in H_{r,s}(N)}{1\le i \le d}$ is linearly independent over $\C(q)$, as desired.

Next, we consider the set
\begin{equation} \label{eq:basis}
\set{P_i(\mathcal L_1,\ldots,\mathcal L_{r+s}) \in H_{r,s}(q,\rho)}{1\le i \le d}
\end{equation}
and assume that
there exists % $a_i(q,\rho) \in \C(q,\rho)$ $(i=1,\ldots,d)$ such that at least one of them is nonzero  and
a nontrivial $\C(q,\rho)$-linear relation
\begin{equation} \label{eq:linearly dependent}
\sum_{i=1}^d a_i(q,\rho) P_i(\mathcal L_1,\ldots,\mathcal L_{r+s})=0.
\end{equation}
By multiplying the least common multiple of the denominators and then
dividing a suitable power of $\rho-q^N$, we may assume that $a_i (q,\rho) \in \C[q,\rho]$ $(1\le i\le d)$
and there exists $i_0$ such that $a_{i_0}(q, q^N) \neq 0$.
Now consider the ring homomorphism
$\phi_{\rho=q^N} : \A \to \C(q)$ given by $q\mapsto q, \ \rho \mapsto q^N$ and
the  $\A$-algebra homomorphism given by
$$\widetilde \phi_{\rho=q^N} : H_\A \to H_{r,s}(N)
 \quad
E_{r,r+1} \mapsto E_{r,r+1},  \ S_i \mapsto S_i,
$$
where $\A=\C[q^{\pm1}, \rho^{\pm1}, \frac{\rho-\rho^{-1}}{q-q^{-1}}] \subset \C(q,\rho)$ and
$H_\A$ is the $\A$-subalgebra of $H_{r,s}(q,\rho)$ generated by $E_{r,r+1}$ and $S_i$'s.
Since
$\mathcal L_k \in H_{\A}$  and
$\phi_{q=\rho^N}(\mathcal L_k)=\mathcal L_k$ for all $1\le k\le r+s$, applying
$\widetilde \phi_{\rho=q^N} $ to \eqref{eq:linearly dependent} yields a contradiction to the facts that  $a_{i_0}(q,q^N)\neq0$, similarly as before.
Hence the set  \eqref{eq:basis} is linearly independent over $\C(q,\rho)$, as desired.
%\ber (IT SEEMS THAT IT WORKS FOR ANY CHOICE OF $N$. QUITE SURPRISING!. BETTER TO CHECK FOR $N < r+s-1$ CASE ONCE AGAIN CAREFULLY.) --> Since the two parameter deformation is semisimple all the time, we can say that it's structure is simpler than the one of one parameter deformation in some sense.\er
\end{proof}

\vskip1em
%%%%%%%%%%%%%%%%%%%%%%%%%%%%%%%%%%%%%%%%%%%%%%%%%%%%%%%%%%%%%%%%
%\nocite{*}
%\bibliographystyle{amsplain}
%\bibliography{collections}

%WBCenter0705.bbl 2015.07.19
\providecommand{\bysame}{\leavevmode\hbox to3em{\hrulefill}\thinspace}
\providecommand{\MR}{\relax\ifhmode\unskip\space\fi MR }
% \MRhref is called by the amsart/book/proc definition of \MR.
\providecommand{\MRhref}[2]{%
  \href{http://www.ams.org/mathscinet-getitem?mr=#1}{#2}
}
\providecommand{\href}[2]{#2}

\end{document}